\newcounter{propcounter}
\newcommand{\eps}{\varepsilon}
\theoremstyle{plain}
\newtheorem{thm}{Theorem}[section]
\newtheorem{theorem}[thm]{Theorem}
\newtheorem{conjecture}[thm]{Conjecture}
\newtheorem{lemma}[thm]{Lemma}
\newtheorem{corollary}[thm]{Corollary}
\newtheorem{proposition}[thm]{Proposition}
\theoremstyle{definition}
\newtheorem{question}[thm]{Question}
\newtheorem{problem}[thm]{Problem}
\newtheorem{remark}{Remark}
\newtheorem{definition}[thm]{Definition}
\newtheorem{claim}[thm]{Claim}
\newtheorem{fact}[thm]{Fact}
\newtheorem{example}[thm]{Example}
\newtheorem{defn-thm}[thm]{Definition-Theorem}
\numberwithin{equation}{section}
\newcommand{\btheorem}{\begin{theorem}}
\newcommand{\etheorem}{\end{theorem}}
\newcommand{\bconjecture}{\begin{conjecture}}
\newcommand{\econjecture}{\end{conjecture}}
\newcommand{\bproposition}{\begin{proposition}}
\newcommand{\eproposition}{\end{proposition}}
\newcommand{\bdefinition}{\begin{definition}}
\newcommand{\edefinition}{\end{definition}}
\newcommand{\bcorollary}{\begin{corollary}}
\newcommand{\ecorollary}{\end{corollary}}
\newcommand{\pr}{\begin{proof}}
\newcommand{\oof}{\end{proof}}
\newcommand{\bclaim}{\begin{claim}}
\newcommand{\eclaim}{\end{claim}}
\newcommand{\bquestion}{\begin{question}}
\newcommand{\equestion}{\end{question}}
\newcommand{\bfact}{\begin{fact}}
\newcommand{\efact}{\end{fact}}
\newcommand{\bremark}{\begin{remark}}
\newcommand{\eremark}{\end{remark}}
\newcommand{\eexample}{\end{example}}
\newcommand{\bexample}{\begin{example}}
\newcommand{\ma}{\end{lemma}}
\newcommand{\lem}{\begin{lemma}}
\begin{document}

\title{The minimum positive uniform Tur\'an density in uniformly dense $k$-uniform hypergraphs}

\author{Hao Lin}
\address{School of Mathematics, Shandong University,
Jinan, China.  Email: {\tt lhao17@mail.sdu.edu.cn}.}

\author{Guanghui Wang}
\address{School of Mathematics, Shandong University,
Jinan, China. Email: {\tt ghwang@sdu.edu.cn}.}
\thanks{The second author is supported by the Natural Science Foundation of China (12231018) and the Young Taishan Scholars Program of Shandong Province (201909001).}

\author{Wenling Zhou}
\address{School of Mathematics, Shandong University, Jinan, China, and Laboratoire Interdisciplinaire des Sciences du Num\'{e}rique, Universit\'{e} Paris-Saclay, Orsay, France.  Email: {\tt gracezhou@mail.sdu.edu.cn}.}

\keywords {Uniform Tur\'an density, Quasi-random hypergraph,  Hypergraph regularity method}

\begin{abstract}
A $k$-graph (or $k$-uniform hypergraph) $H$ is \emph{uniformly dense} if the edge distribution of $H$ is uniformly dense with respect to every large collection of $k$-vertex cliques induced by sets of $(k-2)$-tuples.  Reiher, R\"odl and Schacht [\textit{Int. Math. Res. Not., 2018}] proposed the study of the uniform  Tur\'an density $\pi_{k-2}(F)$ for given $k$-graphs $F$ in uniformly dense $k$-graphs.
Meanwhile, they [\textit{J. London Math. Soc., 2018}] characterized $k$-graphs $F$ satisfying $\pi_{k-2}(F)=0$  and showed that $\pi_{k-2}(\cdot)$ ``jumps" from 0 to at least $k^{-k}$.
In particular, they asked whether
there exist $3$-graphs $F$ with $\pi_{1}(F)$ equal or arbitrarily close to $1/27$.
Recently, Garbe, Kr\'al' and Lamaison 
[\textit{arXiv:2105.09883}]
constructed some $3$-graphs with $\pi_{1}(F)=1/27$.

In this paper, for any $k$-graph $F$, we give a lower bound of $\pi_{k-2}(F)$ based on a probabilistic framework, and provide a general theorem that reduces proving an upper bound on $\pi_{k-2}(F)$ to embedding $F$ in reduced $k$-graphs  of the same density using the regularity method for $k$-graphs. By using this result and Ramsey theorem for multicolored hypergraphs, we extend the results of Garbe, Kr\'al' and Lamaison to $k\ge 3$. 
In other words, we give a  sufficient condition for  $k$-graphs $F$ satisfying $\pi_{k-2}(F)=k^{-k}$. Additionally, we also construct an infinite family of $k$-graphs with $\pi_{k-2}(F)=k^{-k}$. 
\end{abstract}

\maketitle

\section{Introduction}
For a positive integer $\ell$, we denote by $[\ell]$ the set $\{1,\dots,\ell\}$.
Given $k\geq 2$, for a finite set $V$, we use  $[V]^k$ to denote the collection of all subsets of $V$ of size $k$, and $V^{[k]}$ to denote the Cartesian power $V\times \dots \times V$.
We may drop one pair of brackets and write $[\ell]^k$ instead of $[[\ell]]^k$.
A {\it $k$-uniform hypergraph} $H$ (or \textit{$k$-graph} for short)  is a pair $H=(V(H),E(H))$ where $V(H)$ is a finite set of \textit{vertices} and $E(H)\subseteq [V(H)]^{k}$ is a set of \textit{($k$-)edges}. 
A {\it $k$-uniform clique} of order $\ell\ge k$, denoted by $K^{(k)}_{\ell}$, is a $k$-graph on $\ell$ vertices consisting of all $\binom{\ell}{k}$
different $k$-tuples.
So a 2-graph is a simple graph, and a $2$-uniform clique is a complete graph.

\subsection{Tur\'an problems in hypergraphs}
The Tur\'an problem introduced by Tur\'an~\cite{Tu-graph} asks to study for a given $k$-graph $F$ its \textit{Tur\'an number} $ex(n,F)$, the maximum number of $k$-edges in an $F$-free $k$-graph on $n$ vertices. It is a long-standing open problem in Extremal Combinatorics to develop some understanding of these numbers for general $k$-graphs. Ideally, one would like to compute them exactly, but even asymptotic results are currently only known in certain cases, see a wonderful survey~\cite{Keevash-survey}.
It is well known and not hard to observe that the sequence $ex(n,F)/\binom{n}{k}$ is decreasing.
Thus, one often focuses on the \textit{Tur\'an density} $\pi(F)$ of $F$ defined by
\[
\pi(F)=\lim_{n\to \infty}\frac{ex(n,F)}{\binom{n}{k}}.
\]
Tur\'an densities are well-understood for graphs.
Indeed, the Mantel's theorem~\cite{mantel1907problem} and the  Tur\'an's theorem~\cite{Tu-graph} gave the Tur\'an number of complete graphs exactly,
and Erd\H{o}s and Stone~\cite{E-Stone} (also see Erd\H{o}s and Simonovits~\cite{E-Simonovits}) determined the Tur\'an density of any graph $F$ to be equal to 
$\frac{\chi(F)-2}{\chi(F)-1}$,
where $\chi(F)$ denotes the {\it chromatic number} of $F$,
that is the minimum number of colors used to color $V(F)$ such that any two adjacent vertices receive distinct colors. However, the analogous questions for hypergraphs are notoriously difficult, even for the 3-graphs case.
Despite much efforts and attempts so far, our knowledge is somewhat
limited, such as the Tur\'an density of 3-uniform clique $K^{(3)}_4$ on four vertices,  raised by Tur\'an in 1941, is still open~\cite{k43-1, k43-2}. The only general theorem in this area due to Erd\H{o}s~\cite{r-partite} asserts the following result.

\begin{theorem} [{\cite[Theorem 1]{r-partite}}] 
For $k \ge 2$, a $k$-graph $F$ satisfies $\pi(F)=0$ if and only if it is $k$-partite, i.e., there is a partition $V_1\cup V_2 \cup \dots \cup  V_k$ of $V(F)$ such that every edge of $F$ contains
precisely one vertex from each $V_i$ for $i\in [k]$.
\end{theorem}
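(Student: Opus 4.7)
The plan is to prove the two directions of the equivalence separately. For ``$F$ not $k$-partite $\Rightarrow \pi(F) > 0$'' I would exhibit a dense $F$-free construction, while for ``$F$ $k$-partite $\Rightarrow \pi(F) = 0$'' I would run an induction on the uniformity $k$, with the K\H{o}v\'ari--S\'os--Tur\'an theorem serving as the base case.

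For the first direction, fix an equitable partition $V_1 \cup \cdots \cup V_k$ of $[n]$ and let $H_n$ be the complete $k$-partite $k$-graph whose edges are the transversals $\{v_1,\ldots,v_k\}$ with $v_i \in V_i$. Then $|E(H_n)| = \prod_i |V_i| \sim n^k/k^k$, so $|E(H_n)|/\binom{n}{k} \to k!/k^k > 0$. Every sub-hypergraph of $H_n$ is $k$-partite with respect to the same partition, hence $F \not\subseteq H_n$, giving $\pi(F) \geq k!/k^k > 0$.

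For the reverse direction, by the monotonicity $\ex(n,F) \leq \ex(n, K^{(k)}_{t,\ldots,t})$ (for $t$ bounding the largest vertex class of $F$) it suffices to prove $\pi(K^{(k)}_{t,\ldots,t}) = 0$. The plan is to show by induction on $k$ the quantitative statement that for every $\epsilon > 0$ and $t \geq 1$ there exists $n_0$ such that every $k$-graph $H$ on $n \geq n_0$ vertices with $|E(H)| \geq \epsilon n^k$ contains $K^{(k)}_{t,\ldots,t}$. The base case $k=2$ is K\H{o}v\'ari--S\'os--Tur\'an. For the inductive step, write $d(A) := |N_H(A)|$ for the common-neighbourhood size of a $(k-1)$-set $A \subseteq V(H)$, so that $\sum_A d(A) = k|E(H)| \geq k\epsilon n^k$. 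Jensen's inequality applied to the convex function $x \mapsto \binom{x}{t}$ then gives
\[
\sum_{A \in [V(H)]^{k-1}} \binom{d(A)}{t} \; \geq \; \binom{n}{k-1} \binom{\bar d}{t},
\]
where $\bar d$ is the average of $d(A)$. Exchanging the order of summation identifies the left side with $\sum_{T \in [V(H)]^t} e'(T)$, where $e'(T) := |\{A \in [V(H)]^{k-1} : T \subseteq N_H(A)\}|$, so averaging produces a $t$-set $T$ with $e'(T) = \Omega_{\epsilon,t}(n^{k-1})$. The $(k-1)$-graph $H_T$ with edge set $\{A : T \subseteq N_H(A)\}$, restricted to $V(H) \setminus T$, therefore has density bounded below by a positive constant depending only on $\epsilon$ and $t$; by the inductive hypothesis it contains a copy of $K^{(k-1)}_{t,\ldots,t}$ with parts $A_1,\ldots,A_{k-1}$. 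Then for every transversal $(a_1,\ldots,a_{k-1}) \in A_1 \times \cdots \times A_{k-1}$ and every $y \in T$, the $k$-set $\{a_1,\ldots,a_{k-1},y\}$ lies in $E(H)$, so $A_1,\ldots,A_{k-1},T$ are the parts of the desired $K^{(k)}_{t,\ldots,t} \subseteq H$.

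The main technical burden is the bookkeeping through the induction: one must verify that the density lower bound $\epsilon$ at level $k$ propagates to a positive density lower bound on the auxiliary $(k-1)$-graph $H_T$ that depends only on $\epsilon$ and $t$, so that the inductive hypothesis applies uniformly. This is a routine sequence of convexity and averaging estimates, but the quantifiers must be chosen in the right order before $n$ is fixed.
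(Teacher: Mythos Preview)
The paper does not give its own proof of this statement: it is quoted as a classical result of Erd\H{o}s~\cite{r-partite} and cited without argument. Your proposal is correct and is essentially the standard proof. The lower-bound direction via the complete balanced $k$-partite $k$-graph is the canonical construction, and the upper-bound direction---reducing to $K^{(k)}_{t,\dots,t}$, then inducting on $k$ by a Jensen/double-counting argument to find a $t$-set $T$ whose link $(k-1)$-graph is dense---is exactly Erd\H{o}s's original approach. The bookkeeping you flag (propagating a positive density constant from level $k$ to level $k-1$) is indeed routine once the quantifiers are ordered correctly.
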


An important reason for the extreme difficulty in the Tur\'an problems of  hypergraphs is the existence
of certain quasi-random  hypergraphs (some hypergraphs with positive
density obtained from random tournaments or random colorings
of complete graphs) avoiding given subhyperhgraphs.
More precisely, a $k$-graph $H=(V,E)$ is {\it quasi-random} with density $d>0$ if every subset  $U\subseteq V$ satisfies 
\begin{equation*}
\left|\Big|\binom{U}{k}\cap E\Big|-d \binom{|U|}{k}\right|=o(|V|^k).
\end{equation*}
The main result in~\cite{chung1990quasi} asserts, quasi-random graphs with positive density contain a correct number of copies of arbitrary graphs $F$ of fixed size, namely, the number
of copies of $F$ is as expected in the random graph with the same density.
As mentioned above, the Tur\'an problems for quasi-random $k$-graphs with $k\ge 3$, is quite different from the case $k = 2$ and has been
an important topic over decades.
Note that for Tur\'an-type problems,
it is sufficient to require only 
for a $k$-graph $H=(V,E)$ a lower bound of the form
\begin{equation}\label{vertex-dense}
\left|\binom{U}{k}\cap E\right|\ge d\binom{|U|}{k}-\mu|V|^k,
\end{equation}
to hold for any $U\subseteq V$ and $\mu>0$. In general, a $k$-graph $H$ satisfying the condition~\eqref{vertex-dense} is said to be {\it $(d, \mu, 1)$-dense}  (or {\it uniformly dense}).
A somewhat standard application of the so-called \emph{weak regularity lemma} for hypergraphs (straightforward extension of Szemer\'edi's regularity lemma for graphs~\cite{szemeredi1975regular}) implies that such a $(d, \mu, 1)$-dense $k$-graph always contains a quasi-random subhypergraph of density $d$.
Therefore, this suggests a systematic
study of Tur\'an problems in uniformly dense hypergraphs.

\subsection{Tur\'an problems in uniformly dense hypergraphs}
In 1982, Erd\H{o}s and S\'os~\cite{E-Sos} was the first to raise questions on the Tur\'an densities in uniformly dense $3$-graphs. 
Specifically, the Tur\'an problems about the optimal density  in  uniformly dense 3-graphs not
containing a given $3$-graph $F$ (such as $K^{(3)}_4$) can be made precise by introducing the quantities
\begin{align*}
\pi_{1}(F) = \sup \{ d\in [0,1] & : \text{for\ every\ } \mu>0 \ \text{and\ } n_0\in \mathbb{N},\ \text{there\ exists\ an\ } F \text{-free} \\
&\quad (d,\mu,1)\text{-dense}~ 3 \text{-graph~} H\ \text{with~} |V(H)|\geq n_0 \}.
\end{align*}
With this notation at hand, Erd\H{o}s and S\'os  asked to determine $\pi_1(K^{(3)}_4)$ and $\pi_1(K^{(3)-}_4)$, where $K^{(3)-}_4$ is $K^{(3)}_4$ with an edge removed. However, determining $\pi_1(F)$  for a given 3-graph $F$ is also very challenging.
The  conjecture for $\pi_1(K^{(3)}_4)=1/2$  has been an urgent problem in this area since R\"odl~\cite{k43-rodl} gave a quasi-random construction in 1986.
$\pi_1(K^{(3)-}_4)=1/4$
was solved recently by Glebov, Kr\'al' and Volec~\cite{k43minus-1}, and independently by Reiher, R\"odl and
Schacht~\cite{k43minus-2}. We refer the reader to the survey by Reiher~\cite{reiher2020extremal} for a more comprehensive treatment and further results for $3$-graphs.

The study of Tur\'an problems in uniformly dense $k$-graphs has recently gained popularity due to the work of  Reiher,
R\"odl and Schacht~\cite{vanishing, RRS-Mantel,  k43minus-2}.
In addition to providing a solution to the aforementioned conjecture of Erd\H{o}s and S\'os,
they also determined a large collection of uniform Tur\'an densities of $k$-graphs  based on a family of naturally defined uniformly dense conditions.
 Here we state a concept of $(d,\mu, j)$-dense $k$-graphs (see Definition~\ref{def-j-dense}) considered by Reiher, R\"odl and
Schacht in~\cite{RRS-Mantel}, 
which serves as a natural generalization of $(d, \mu, 1)$-dense $3$-graphs.

Given integers $k>j\ge 0$ and a $j$-graph $G^{(j)}$, we denote by $\mathcal{K}_k(G^{(j)})$ for the collection of $k$-sets of $V(G^{(j)})$ which span a $j$-uniform clique $K^{(j)}_k$ on $k$ vertices in $G^{(j)}$.
Note that $|\mathcal{K}_k(G^{(j)})|$ is the number of all copies of $K^{(j)}_k$ in $G^{(j)}$.

\begin{definition}[$(d, \mu, j)$-denseness] \label{def-j-dense}
	Given integers $n\ge k>j\ge 0$, let real numbers $d\in [0,1]$, $\mu>0$, and $H = (V, E)$ be a
	$k$-graph with $n$ vertices. We say that $H$ is {\it $(d, \mu, j)$-dense} if
\begin{equation}\label{eq-j-dense}
\left|\mathcal{K}_k(G^{(j)})\cap E\right|\ge d\left|\mathcal{K}_k(G^{(j)})\right|-\mu n^k
\end{equation}
holds for all $j$-graphs $G^{(j)}$ with vertex set $V$.
\end{definition}

\begin{remark}\label{rem-0-dense}
	Note that for any vertex set $V$ there are only two $0$-graphs (the one with empty edge
	set and the one with the empty set being an edge).
	Therefore, in the degenerate case,  $H$ is $(d, \mu, 0)$-dense if
\[
|E|\ge d\binom{|V|}{k}-\mu n^k.
\]
\end{remark}

Restricting to $(d,\mu,j)$-dense $k$-graphs, the appropriate
{\it uniform Tur\'an density} $\pi_{j}(F)$ for a given $k$-graph $F$ can be defined as
\begin{equation*}
	\begin{split}
		\label{j-turan-dense}
		\pi_{j}(F) = \sup \{ d\in [0,1] & : \text{for\ every\ } \mu>0 \ \text{and\ } n_0\in \mathbb{N},\ \text{there\ exists\ an\ } F \text{-free} \\
		&\quad (d,\mu,j)\text{-dense}~ k \text{-graph~} H\ \text{with~} |V(H)|\geq n_0 \}.
	\end{split}
\end{equation*}

In particular, Reiher, R\"odl and Schacht~\cite{RRS-Mantel} proposed the following problem.

\begin{problem}[{\cite[Problem 1.7]{RRS-Mantel}}]\label{problem1}
	Determine $\pi_j(F)$ for all $k$-graphs $F$ and all $0\le j\le k-2$.
\end{problem}

\begin{remark}
For $j=k-1$, it is known that every $k$-graph $F$ satisfies $\pi_{k-1}(F)=0$,  which follows from the work in~\cite{count-dense}. Moreover, for every $k$-graph $F$ we have
\begin{equation}\label{compare}
	\pi(F)=\pi_0(F)\ge \pi_{1}(F)\ge \dots \ge \pi_{k-2}(F)\ge \pi_{k-1}(F)=0,
\end{equation}
since $\mathcal K_k(G^{(j)}) = \mathcal K_k(G^{(j+1)})$ for every $j$-graph $G^{(j)}$ with $G^{(j+1)} = \mathcal K_{j+1}(G^{(j)})$, and $\pi(F)=\pi_0(F)$ by Remark~\ref{rem-0-dense}. 
\end{remark}

Given a $k$-graph $F$, the quantities appearing in this chain of inequalities~(\ref{compare}) will probably be harder to determine the further they are on the left. This suggests that Problem~\ref{problem1} for the case $j=k-2$ is the first interesting case and we will focus on $\pi_{k-2}(F)$  in this paper.

In 2018, Reiher, R\"odl and Schacht~\cite{vanishing} suggested that for the case $j=k-2$ one can establish a theory that resembles some extent the classical theory for graphs initiated by Tur\'an himself and developed further by Erd\H{o}s, Stone, Simonovits and many others. 
In particular, they gave a characterization of $k$-graphs $F$
with $\pi_{k-2}(F)=0$ (see Theorem~\ref{thm-k-2-zero-density}). 
In addition to $k$-graphs $F$ with  $\pi_{k-2}(F)=0$, the only $k$-graph for which $\pi_{k-2}(\cdot)$ is known is $F^{(k)}$ on $(k + 1)$ vertices with three edges, and $\pi_{k-2}(F^{(k)})=2^{1-k}$ is obtained from~\cite{RRS-Mantel}.

For simplicity,  we write $\llbracket {i_1,i_2, \dots, i_\ell}\rrbracket$ to denote a set  $\{i_1,i_2, \dots, i_\ell\}\subset \mathbb Z$ with $i_1<i_2<\dots<i_\ell$, and $\llbracket v_{i(1)}, v_{i(2)},\dots, v_{i(\ell)}\rrbracket$ to denote a set  $\{ v_{i(1)}, v_{i(2)},\dots, v_{i(\ell)}\}$ with ${i(1)}< {i(2)} \dots< {i(\ell)}$.
Given a $k$-graph $F$ with $f$ vertices,
let $\partial F:=\{S\in [V(F)]^{k-1}: \exists~ e\in E(F), S\subset e\}$ be the \emph{shadow} of $F$. 
We say that an  ordering $( v_1,v_2,\dots,v_f)$ of $V(F)$ is {\it vanishing} if $\partial F$ can be partitioned into $k$ disjoint sets $\mathcal C_\ell $ for $\ell\in [k]$ such that every $k$-edge $e=\llbracket v_{i(1)},\dots, v_{i(k)} \rrbracket$ of $F$ satisfies 
\[
e\setminus \{v_{i(\ell)}\}\in \mathcal C_\ell, ~\text{for~every~} \ell\in [k].
\]
These $(k-1)$-sets that belong to $\mathcal C_\ell $  are referred to as {\it $\ell$-type} w.r.t. (i.e., with respect to) $F$ (under the vanishing  ordering). In particular,
given a vanishing ordering $\tau$ of $V(F)$ and a $(k-1)$-set $S\subset V(F)$, we say that $S$ is {\it $\ell$-type} w.r.t. $F$, if there is a vertex $v\in V(F)$ such that $S\cup \{v\}$ is a $k$-edge of $F$ and the ordering of $S\cup \{v\}$ under the $\tau$ is such that $v$ is in the $\ell$\textsuperscript{th} position.

\begin{theorem}[{\cite[Theorem 6.1]{vanishing}}]\label{thm-k-2-zero-density}
A $k$-graph $F$ satisfies $\pi_{k-2}(F)=0$ if and only if it has a vanishing ordering of $V(F)$.
\end{theorem}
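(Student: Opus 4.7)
The theorem is an if-and-only-if, so my plan is to handle the two directions separately.

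For the necessity direction, which I would rephrase in the contrapositive as ``no vanishing ordering implies $\pi_{k-2}(F)>0$'', the plan is a random construction. On $V=[n]$, I would pick a uniformly random $k$-coloring $\chi\colon [V]^{k-1}\to [k]$ of the $(k-1)$-subsets of $V$ and let $H_\chi$ consist of those ordered $k$-sets $\{u_1<\cdots<u_k\}$ for which $\chi(\{u_1,\dots,u_k\}\setminus\{u_\ell\})=\ell$ holds for every $\ell\in[k]$. Each $k$-set is included independently with probability $k^{-k}$ because the $k$ defining $(k-1)$-subsets are distinct. I would then show, via a bounded-differences martingale that exposes the color of one $(k-1)$-subset at a time, that $|\mathcal K_k(G^{(k-2)})\cap E(H_\chi)|$ concentrates around $k^{-k}|\mathcal K_k(G^{(k-2)})|$ with failure probability $\exp(-\Omega(\mu^2 n^{k-1}))$, which beats the $2^{O(n^{k-2})}$ union bound over all $(k-2)$-graphs $G^{(k-2)}$ on $V$. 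Hence $H_\chi$ is $(k^{-k}-\mu,\mu,k-2)$-dense with high probability. The crucial point is that $H_\chi$ is $F$-free: any copy of $F$ inside $H_\chi$, ordered via the linear order on $V$, would yield a partition $\partial F=\bigsqcup_\ell (\chi^{-1}(\ell)\cap \partial F)$ exhibiting a vanishing ordering of $V(F)$, contradicting the assumption. This gives $\pi_{k-2}(F)\geq k^{-k}>0$.

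For the sufficiency direction, fix a vanishing ordering $v_1<\cdots<v_f$ with partition $\partial F=\mathcal C_1\sqcup\cdots\sqcup\mathcal C_k$. I would prove by induction on $f$ the strengthened statement that every $(d,\mu,k-2)$-dense $k$-graph on $n$ vertices contains $\Omega_d(n^f)$ labelled copies of $F$ (supersaturation), provided $\mu$ is small enough. The inductive step rests on the key structural feature of a vanishing ordering: every edge of $F$ incident to $v_f$ has $(k-1)$-shadow $e\setminus\{v_f\}\in\mathcal C_k$. Given an embedding $w_1,\dots,w_{f-1}$ of $F-v_f$ provided by induction (note that the restricted ordering $v_1<\cdots<v_{f-1}$ remains vanishing for $F-v_f$), I would bundle the required links $N_H(w_{j_1},\dots,w_{j_{k-1}})$ over $\{v_{j_1},\dots,v_{j_{k-1}}\}\in \mathcal C_k$ adjacent to $v_f$ into an auxiliary $(k-2)$-graph $G^{(k-2)}$ on $V(H)$ whose $K_k^{(k-2)}$-cliques encode precisely the valid choices of $w_f$. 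Invoking the $(d,\mu,k-2)$-density hypothesis on $G^{(k-2)}$ forces $\Omega(n^k)$ such $w_f$, and averaging over the inductively produced copies of $F-v_f$ yields the supersaturation bound for $F$.

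I expect the main obstacle to be this inductive step, specifically the honest construction of the auxiliary $(k-2)$-graph $G^{(k-2)}$ together with verifying that $|\mathcal K_k(G^{(k-2)})|$ is macroscopic (of order $n^k$) and that the required intersection of link hypergraphs is captured by its $K_k^{(k-2)}$-cliques. Here the disjointness of the partition $\partial F=\bigsqcup_\ell \mathcal C_\ell$ is what makes the scheme feasible: the $(k-1)$-shadows consumed at step $i$ (those in $\mathcal C_k$ attached to $v_i$) are disjoint from those consumed at earlier steps, so the accumulating constraints never interact in a way that the single hypothesis of $(k-2)$-denseness cannot resolve. A cleaner but heavier alternative, paralleling the general framework the authors advertise in the abstract, would be to apply a hypergraph weak-regularity lemma at the $(k-2)$-level to $H$, extract a reduced $k$-graph of density close to $d$, run the greedy embedding inside the reduced $k$-graph, and then lift through the associated counting lemma; this separates the analytic regularity layer from the purely combinatorial embedding lemma driven by the vanishing ordering.
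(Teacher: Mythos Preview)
Note first that the paper does not prove Theorem~\ref{thm-k-2-zero-density}; it is quoted from~\cite{vanishing}. The necessity direction you outline is correct and matches what the paper itself sketches before Corollary~\ref{lower-bound}, with the Azuma--Hoeffding computation carried out in the proof of Theorem~\ref{low-k-graphs}.

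Your primary plan for sufficiency, however, does not go through. The $(d,\mu,k-2)$-density hypothesis lower-bounds $|\mathcal K_k(G^{(k-2)})\cap E(H)|$ for an arbitrary $(k-2)$-graph $G^{(k-2)}$; it gives no information about the common link $\bigcap_i N_H(T_i)$ of a fixed finite family of $(k-1)$-sets $T_i\subset V(H)$, which is exactly what extending a \emph{fixed} embedding of $F-v_f$ by one vertex requires. There is no $(k-2)$-graph whose $K_k$-cliques ``encode precisely the valid choices of $w_f$'': valid $w_f$ form a vertex subset, not a family of $k$-sets, and any packaging of the simultaneous link constraints into a clique condition forgets that one and the same vertex must satisfy all of them. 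Already for $k=3$ the hypothesis is one-sided density on vertex subsets and carries no control on individual pair codegrees; averaging over the $\Omega(n^{f-1})$ partial embeddings does not rescue the step, since those embeddings are not confined to any substructure to which the density axiom applies. Your ``heavier alternative'' via hypergraph regularity is not an alternative but the actual argument: it is the route taken in~\cite{vanishing} and is precisely the machinery this paper assembles as Theorem~\ref{thm-turan-reduced}. After regularising, the vanishing ordering is what permits a greedy construction of a reduced map into any $\eps$-dense reduced $k$-graph (set $\phi(v_i)=i$ and choose $\psi(S)$ for each $S\in\partial F$ according to its unique type $\ell$, via the Ramsey-type selection in Lemmas~\ref{lem-3} and~\ref{lem-5}); this is essentially property~\ref{p111} of Lemma~\ref{lem-Embedding-reduced}. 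Promote the regularity route to the main argument and drop the direct induction.
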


In fact, Theorem~\ref{thm-k-2-zero-density} yields the following strengthening. For $n\in \mathbb N$, consider a uniform random partition of $[n]^{k-1}$ into the sets $\mathcal C'_\ell $ for $\ell\in [k]$. We define a probability distribution $H(n)$ on $k$-graphs of order $n$ as follows.  Let $V(H(n))=[n]$ and include a  $k$-set $e=\llbracket i_1, \dots, i_k\rrbracket$ in $E(H(n))$ if $e$ satisfies that 
$e\setminus \{i_{\ell}\}\in \mathcal C'_\ell$ for every $\ell\in [k]$. Using probabilistic arguments, we can show that for any fixed $\mu>0$ and large $n$ there exists $H \in  H(n)$ such that $H$ is $(k^{-k}, \mu, k-2)$-dense. Clearly, each subhypergraph of $H$ has a vanishing ordering of its vertices. 
Thus, Theorem~\ref{thm-k-2-zero-density} implies the following result.

\begin{corollary} \label{lower-bound}
If a $k$-graph $F$ satisfies $\pi_{k-2}(F)>0$, then $\pi_{k-2}(F)\ge k^{-k}$.
\end{corollary}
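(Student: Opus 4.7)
The plan is to realize the probabilistic construction sketched immediately before the corollary and combine it with the characterization in Theorem~\ref{thm-k-2-zero-density}. Assign to each $(k-1)$-subset of $[n]$ an independent uniformly random label in $[k]$, and let $\mathcal{C}'_\ell$ be the set of $(k-1)$-subsets receiving label $\ell$. Define the random $k$-graph $H(n)$ on $[n]$ by placing $e=\llbracket i_1,\dots,i_k\rrbracket$ in $E(H(n))$ iff $e\setminus\{i_\ell\}\in\mathcal{C}'_\ell$ for every $\ell\in[k]$. My first observation is that the natural order on $[n]$ is itself a vanishing ordering of $H(n)$, since the restriction of the partition $\mathcal{C}'_1,\dots,\mathcal{C}'_k$ to $\partial H(n)$ matches the definition verbatim. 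Any subhypergraph of $H(n)$ inherits a vanishing ordering by pulling back this order along the embedding, so by Theorem~\ref{thm-k-2-zero-density} any $F$ with $\pi_{k-2}(F)>0$ (equivalently, with no vanishing ordering) fails to embed into $H(n)$ regardless of the labels; $H(n)$ is automatically $F$-free.

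The main technical step is to verify that for every fixed $\mu>0$ and all sufficiently large $n$, $H(n)$ is $(k^{-k},\mu,k-2)$-dense with probability tending to $1$. Fix a $(k-2)$-graph $G^{(k-2)}$ on $[n]$. For each clique $e\in\mathcal{K}_k(G^{(k-2)})$, its $k$ distinct $(k-1)$-subsets each need to land in the prescribed class, an event of probability exactly $k^{-k}$, so by linearity
\[
\mathbb{E}\bigl[|\mathcal{K}_k(G^{(k-2)})\cap E(H(n))|\bigr]=k^{-k}|\mathcal{K}_k(G^{(k-2)})|.
\]
Viewing this count as a function of the $\binom{n}{k-1}$ independent labels and noting that flipping one label changes the edge status of at most $n$ cliques (those containing the flipped $(k-1)$-set), McDiarmid's inequality bounds the probability of a one-sided deviation of size $\mu n^k$ by $\exp\bigl(-c(k)\mu^2 n^{k-1}\bigr)$. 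A union bound over the $2^{\binom{n}{k-2}}=\exp(O(n^{k-2}))$ choices of $G^{(k-2)}$ still yields failure probability $o(1)$, since $n^{k-1}$ dominates $n^{k-2}$. Combined with the first observation, there exist $F$-free $(k^{-k},\mu,k-2)$-dense $k$-graphs of every sufficiently large order, yielding $\pi_{k-2}(F)\ge k^{-k}$ directly from the definition of $\pi_{k-2}$.

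I expect the main delicacy to lie in the concentration step: making precise that the bounded-difference constant really is $O(n)$ despite the coupled dependence of the edge indicator on $k$ correlated shadow labels, and checking that the concentration exponent $\Omega(n^{k-1})$ comfortably dominates the $O(n^{k-2})$ cost of the union bound over all $(k-2)$-graphs $G^{(k-2)}$. Once these bookkeeping points are handled, the rest is a clean assembly of Theorem~\ref{thm-k-2-zero-density} with the randomized construction.
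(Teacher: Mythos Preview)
Your proposal is correct and follows essentially the same approach as the paper: the construction and the $F$-freeness argument via Theorem~\ref{thm-k-2-zero-density} match the sketch preceding the corollary verbatim, and your concentration step (McDiarmid with Lipschitz constant $n$ over $\binom{n}{k-1}$ coordinates, then a union bound over $2^{O(n^{k-2})}$ choices of $G^{(k-2)}$) is exactly the argument the paper carries out in the proof of Theorem~\ref{low-k-graphs} using the equivalent Azuma--Hoeffding formulation.
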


Therefore, Reiher, R\"odl and Schacht~\cite{vanishing} proposed the following problems for $k=3$.

\begin{problem}\label{problem2}
Is there a $k$-graph $F$ with $\pi_{k-2}(F)$ equal or arbitrarily close to $k^{-k}$?
\end{problem}

For $k=2$, the answer to Problem~\ref{problem2} is no, since $\pi_{0}(F)=\pi(F)$ and every graph $F$ with $\pi(F)> 0$ satisfies $\pi(F)\ge 1/2$ by the result in~\cite{E-Stone}.
However, recently Garbe, Kr\'al' and Lamaison~\cite{1/27} gave an affirmative answer to Problem~\ref{problem2} for $k=3$ by giving a sufficient condition for 3-graphs $F$ with $\pi_{1}(F)=1/27$, and constructing examples of 3-graphs that satisfy this condition.
%

\subsection{Our results}
In this paper, for any $k\ge 3$, we first study the upper and lower bounds of $\pi_{k-2}(F)$ for any given graph $F$ within a global framework.
Upon reviewing all the known results for $\pi_{k-2}(\cdot)$, we observe that the lower bounds of $\pi_{k-2}(\cdot)$ are all obtained from probabilistic constructions. In particular, when $k=3$, the lower bounds of $\pi_{1}(\cdot)$ are based on the probabilistic framework introduced in~\cite[Section 2]{reiher2020extremal}, which is inspired by and unifies earlier probabilistic constructions, in particular the one from~\cite{k43-rodl}. We summarize this framework in the following theorem.

\begin{theorem}\label{low-3-graphs}
Let $F$ be a $3$-graph. Suppose that there exists $r\in \mathbb N$ and a set $\mathscr P \subseteq [r]\times[r]\times[r]$
with the following properties: for every $n \in \mathbb N$ and every $\psi :
[n]^2 \to [r]$, the $3$-graph $H$ with vertex set
$[n]$ and edge set
\[
E(H)=\{\{x,y,x\}\in [n]^3: x<y<z \text{~and~} (\psi(y,z), \psi(x,z), \psi(x,y))\in \mathscr P\}
\] 
 is $F$-free. Then,
$\pi_1(F)\ge |\mathscr P|/r^3$.
\end{theorem}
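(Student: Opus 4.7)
The plan is to sample the colouring $\psi:[n]^2\to[r]$ uniformly at random (each pair receives an independent uniform colour from $[r]$) and show that for every fixed $\mu>0$ and all large $n$ the resulting $3$-graph $H$ is $(|\mathscr P|/r^3,\mu,1)$-dense with positive probability. Because the hypothesis guarantees that $H$ is $F$-free for \emph{every} $\psi$, exhibiting a single good $H$ for each pair $(\mu,n_0)$ is already enough to force $\pi_1(F)\ge|\mathscr P|/r^3$.

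First I would observe that for any triple $x<y<z$ in $[n]$ the ordered colour triple $(\psi(y,z),\psi(x,z),\psi(x,y))$ is uniform on $[r]^3$, so that triple lies in $E(H)$ with probability exactly $|\mathscr P|/r^3$. Consequently, for every $U\subseteq[n]$ with $|U|=m$ the random variable $X_U:=\bigl|\binom{U}{3}\cap E(H)\bigr|$ has expectation $\binom{m}{3}|\mathscr P|/r^3$. The key technical step is then concentration: $X_U$ depends on the $\binom{m}{2}$ independent colours $\{\psi(i,j):i<j,\ i,j\in U\}$, and altering one $\psi(i,j)$ changes $X_U$ by at most $m-2$ (the number of triples in $U$ through the pair $\{i,j\}$). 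The bounded-differences (Azuma) inequality then yields
\[
\Pr\!\left[X_U\le\mathbb{E}[X_U]-\tfrac{\mu n^3}{2}\right]\le\exp\!\left(-\frac{(\mu n^3/2)^2}{2\binom{m}{2}(m-2)^2}\right)\le\exp\!\left(-\frac{\mu^2 n^2}{16}\right),
\]
since $2\binom{m}{2}(m-2)^2\le n^4$.

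A union bound over the $2^n$ subsets $U\subseteq[n]$ bounds the overall failure probability by $2^n\exp(-\mu^2 n^2/16)=o(1)$, so for every sufficiently large $n$ some colouring $\psi$ yields
\[
\left|\binom{U}{3}\cap E(H)\right|\ge\frac{|\mathscr P|}{r^3}\binom{|U|}{3}-\mu n^3
\]
simultaneously for all $U\subseteq[n]$; combined with the $F$-freeness coming from the hypothesis, this furnishes the required witness. The only point needing real care is that the denseness condition is quantified over the exponentially many $U\subseteq[n]$: the argument works because the per-colour Lipschitz constant is $O(n)$ and there are $O(n^2)$ independent colours, so Azuma concentration is at log-scale $\Theta(n^2)$, which comfortably beats the $n\log 2$ entropy of the subset lattice. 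A minor pitfall to avoid is conflating $(d,\mu,1)$-denseness (the vertex-subset condition defining $\pi_1$ for $3$-graphs, namely~\eqref{vertex-dense}) with graph-indexed variants at larger $j$; only the former is required here, which is precisely why the crude $2^n$ union bound is affordable.
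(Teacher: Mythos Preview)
Your argument is correct and is essentially the paper's own proof specialised to $k=3$: the paper proves the general Theorem~\ref{low-k-graphs} by taking $\psi$ uniformly at random, applying Azuma--Hoeffding (Lemma~\ref{Azuma}) to each test set with Lipschitz constant $O(n)$ per colour, and beating the union bound over the $2^{n^{k-2}}$ test hypergraphs, which for $k=3$ is exactly your $2^n$-subset union bound. The only cosmetic difference is that you restrict to the $\binom{m}{2}$ colours inside $U$ with Lipschitz constant $m-2$, whereas the paper uses all $\binom{n}{k-1}$ colours with the cruder Lipschitz bound $n$; both give $\exp(-\Omega(n^2))$ failure probability.
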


Using Azuma-Hoeffding inequality,
we extend the above framework  and obtain a lower bound of $\pi_{k-2}(\cdot)$ based on a
more general framework for all $k\ge 3$.

\begin{theorem}\label{low-k-graphs}
Let $F$ be a $k$-graph. Suppose that there exists $r\in \mathbb N$ and a set $\mathscr P \subseteq [r]^{[k]}$
	with the following properties: for every $n \in \mathbb N$ and every $\psi :
	[n]^{k-1} \to [r]$, the $k$-graph $H$ with vertex set
	$[n]$ and edge set
	\[
	E(H)=\{e=\llbracket {i_1,i_2, \dots, i_\ell}\rrbracket \in [n]^k: (\psi(e\setminus \{i_{1}\}), \psi(e\setminus \{i_{2}\}), \dots, \psi(e\setminus \{i_{k}\}))\in \mathscr P\}
	\] 
	is $F$-free. Then,
	$\pi_{k-2}(F)\ge |\mathscr P|/r^k$.
\end{theorem}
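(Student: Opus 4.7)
The plan is to realize the construction via a uniformly random coloring $\psi$ and show that the resulting $H_\psi$ is $(d,\mu,k-2)$-dense with positive probability, where $d:=|\mathscr{P}|/r^k$; since every realization of $H_\psi$ is $F$-free by hypothesis, the existence of one good $\psi$ already witnesses $\pi_{k-2}(F)\ge d$. Concretely, for each $(k-1)$-set $S$ of $[n]$ I would choose $\psi(S)\in[r]$ independently and uniformly at random, and let $H$ be the $k$-graph defined in the statement of the theorem. Given $\mu>0$ and $n_0$, the goal is to show that for all sufficiently large $n\ge n_0$ the random $H$ is $(d,\mu,k-2)$-dense with probability tending to $1$.

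For the first moment, fix any $(k-2)$-graph $G^{(k-2)}$ on $[n]$ and set $X:=|\mathcal{K}_k(G^{(k-2)})\cap E(H)|$. For each $k$-set $e=\llbracket i_1,\ldots,i_k\rrbracket$, its $k$ shadow $(k-1)$-subsets $e\setminus\{i_\ell\}$ are pairwise distinct, so the tuple $(\psi(e\setminus\{i_1\}),\ldots,\psi(e\setminus\{i_k\}))$ is uniform on $[r]^k$. Hence $\Pr[e\in E(H)]=|\mathscr{P}|/r^k=d$ and $\mathbb{E}[X]=d\cdot|\mathcal{K}_k(G^{(k-2)})|$, which is exactly the target of the $(d,\mu,k-2)$-density inequality up to its error term.

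For concentration I would view $X$ as a function of the independent random variables $\{\psi(S)\}_{S\in[n]^{k-1}}$. Changing a single value $\psi(S)$ can only flip membership in $E(H)$ for those $k$-sets $e\supset S$, of which there are at most $n-k+1\le n$; so the Lipschitz constant of $X$ in the $S$-coordinate is at most $n$. The bounded-differences (McDiarmid/Azuma--Hoeffding) inequality with $\sum_S c_S^2\le\binom{n}{k-1}\cdot n^2\le n^{k+1}$ then gives
\[
\Pr\bigl[\,|X-\mathbb{E}[X]|>\mu n^k\,\bigr]\le 2\exp\bigl(-2\mu^2 n^{k-1}\bigr).
\]

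Finally, I would take a union bound over the $2^{\binom{n}{k-2}}\le 2^{n^{k-2}}$ possible choices of $G^{(k-2)}$; the exponent $n^{k-1}$ dominates $n^{k-2}$, so for $n$ large enough (depending on $\mu$ and $k$) the failure probability vanishes, and a good $\psi$ exists. The main obstacle is precisely this last margin: the concentration exponent has to beat the log-count of $(k-2)$-graphs, and one must be careful that independence is only needed \emph{within} a single $k$-set (through distinctness of its shadow $(k-1)$-subsets), not across different $k$-sets, which typically share $(k-1)$-shadows. This extends the probabilistic framework behind Theorem~\ref{low-3-graphs} verbatim from $k=3$ to arbitrary $k\ge 3$.
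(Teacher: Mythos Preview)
Your proposal is correct and follows essentially the same argument as the paper: a uniformly random $\psi$, expectation via distinctness of the $k$ shadow $(k-1)$-sets, Azuma--Hoeffding/McDiarmid with Lipschitz constant at most $n$ per coordinate yielding an $\exp(-\Omega(n^{k-1}))$ tail, and a union bound over the at most $2^{n^{k-2}}$ choices of $G^{(k-2)}$. The only cosmetic difference is that the paper uses the one-sided lower-tail bound (which is all that $(d,\mu,k-2)$-denseness requires), whereas you state the two-sided version.
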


Clearly, Theorem~\ref{low-k-graphs} is equivalent to Theorem~\ref{low-3-graphs} when $k = 3$. Next we will provide a general statement that reduces proving an upper bound of $\pi_{k-2}(F)$ for a given  $k$-graph $F$ to embedding $F$ in \emph{reduced $k$-graphs} (see Definition~\ref{def-reduced-graph})  of the same density using the regularity method for $k$-graphs. We start with some notation introduced in ~\cite[Section 4]{RRS-Mantel}.

\begin{definition}[reduced $k$-graphs]\label{def-reduced-graph}
Given a finite index set $I\in \mathbb N$ with $|I|=m$, for each $\mathcal X\in [I]^{k-1}$, let $\mathcal {P}_{\mathcal X}$ denote a finite nonempty vertex set such that for any two distinct $\mathcal X, \mathcal X'\in [I]^{k-1}$ the sets $\mathcal {P}_{\mathcal X}$ and $\mathcal {P}_{\mathcal X'}$ are disjoint.
For any $\mathcal Y \in [I]^{k}$, let $\mathcal A_{\mathcal Y}$ denote a $k$-partite 
$k$-graph with vertex partition $\{\mathcal {P}_{\mathcal X}: \mathcal X \in [{\mathcal Y}]^{k-1}\}$.
Then the $\binom{|I|}{k-1}$-partite 
	$k$-graph $\mathcal A$ with
	\[
	V(\mathcal A)=\bigcup_{\mathcal X \in [I]^{k-1}}\mathcal {P}_{\mathcal X}, ~\text{and}~~ E(\mathcal A)=\bigcup_{\mathcal Y\in [I]^{k}} E(\mathcal A_{\mathcal Y})
	\]
is called an {\it $m$-reduced $k$-graph} with \emph{index set} $I$, \emph{vertex classes} $\mathcal {P}_{\mathcal X}$ and {\it constituents}  $\mathcal A_{\mathcal Y}$. 
\end{definition}

For brevity, we often simply write ``let $\mathcal A$ be an $m$-reduced $k$-graph"  instead of ``let $\mathcal A$ be an $m$-reduced $k$-graph with index set $[m]$, vertex classes $\mathcal {P}_{\mathcal X}$ and
constituents $\mathcal A_{\mathcal Y}$". 
Given a $m$-reduced $k$-graph $\mathcal A$ and  $d \in[0, 1] $, we say that 
$\mathcal A$ is {\it $d$-dense} if 
\[
|E(\mathcal A_{\mathcal Y})|\ge d \cdot \prod_{{\mathcal X}\in [\mathcal Y]^{k-1}}|\mathcal {P}_{\mathcal X}|
\]
holds for all $\mathcal Y \in [m]^{k}$.

Whether an $m$-reduced $k$-graph $\mathcal A$ can ``embed" a given $k$-graph $F$ can be expressed in terms of the existence of so-called ``reduced maps" which are going to be introduced next.

\begin{definition}[reduced maps]\label{def-reduce-map}
A \emph{reduced map} from a $k$-graph $F$ to a reduced $k$-graph $\mathcal A=(I, \mathcal {P}_{\mathcal X}, \mathcal A_{\mathcal Y})$ is a pair $(\phi,\psi)$ such that
\begin{enumerate}[label=(\arabic*)]
\item $\phi:V(F)\to I$ and $\psi: \partial F\to V(\mathcal A)$;
\item if $S=\{i_1,i_2,\dots, i_{k-1}\}\in \partial F$, then $\mathcal X=\{\phi(i_1), \phi(i_2), \dots, \phi(i_{k-1})\}\in [I]^{k-1}$ and $\psi(S) \in \mathcal P_{\mathcal X}$;
\item if $e=\{i_1,i_2,\dots, i_{k} \} \in E(F)$, then $\mathcal Y= \{\phi(i_1), \phi(i_2), \dots, \phi(i_{k})\}\in [I]^{k}$ and
\[
\{\psi(e\setminus \{i_{1}\}), \psi(e\setminus \{i_{2}\}), \dots, \psi(e\setminus \{i_{k}\})\}\in E( \mathcal A_{\mathcal Y}).
\]
\end{enumerate}
\end{definition} 

If there is a reduced map from $F$ to $\mathcal A$, we say that $\mathcal A$  \emph{embeds} $F$.
Now the general result about proving an upper bound of $\pi_{k-2}(F)$ for a given  $k$-graph $F$ in reduced $k$-graphs asserts the following.

\begin{theorem} \label{thm-turan-reduced}
	Let $F$ be a $k$-graph with $k\ge 3$ and $d\in [0,1]$.
	If for any $\eps>0$ there exists $m\in \mathbb N$ such that each $(d+\eps)$-dense $m$-reduced $k$-graph $\mathcal A$ embeds $F$, then $\pi_{k-2}(F)\le d$.
\end{theorem}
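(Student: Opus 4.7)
The plan is to apply the hypergraph regularity method to convert the $(d',\mu,k-2)$-denseness of an $F$-free candidate $H$ into a $(d+\eps)$-dense reduced $k$-graph $\mathcal A$, invoke the hypothesis to produce a reduced map $F \to \mathcal A$, and transfer it back to a genuine copy of $F$ in $H$ via a counting lemma. Concretely, to show $\pi_{k-2}(F) \le d$, I fix $d' > d$, set $\eps = (d'-d)/3$, and use the hypothesis to obtain $m$ such that every $(d+\eps)$-dense $m$-reduced $k$-graph embeds $F$. Then I choose a hierarchy $\nu \ll \eta \ll \delta_{k-1} \ll \cdots \ll \delta_2 \ll \mu \ll \eps, 1/m$, take $n$ large enough, and apply the regularity lemma for $k$-graphs to an arbitrary $(d',\mu,k-2)$-dense $H$ on $n$ vertices: this produces a partition $V(H) = V_1 \cup \cdots \cup V_M$ with $M \ge m$, together with regularized $j$-graphs for $2 \le j \le k-1$, on whose level-$(k-1)$ polyads $H$ is $\nu$-regular on almost all $K_k^{(k-1)}$-cliques.

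Next I restrict to any $m$ of the vertex classes, relabelling so that $I = [m]$. For each $\mathcal X \in [m]^{k-1}$, let $\mathcal P_{\mathcal X}$ be the set of level-$(k-1)$ polyads supported on the classes indexed by $\mathcal X$, and for each $\mathcal Y \in [m]^k$ include in $\mathcal A_{\mathcal Y}$ those $k$-tuples $(P_1,\dots,P_k) \in \prod_{\mathcal X \in [\mathcal Y]^{k-1}} \mathcal P_{\mathcal X}$ which are coherent (i.e.\ agree on their level-$(k-2)$ blocks) and on whose $K_k^{(k-1)}$-cliques $H$ has relative density at least $d+\eps$. To verify that $\mathcal A$ is $(d+\eps)$-dense I would argue by contradiction: if some constituent $\mathcal A_{\mathcal Y}$ failed the bound, the level-$(k-2)$ blocks underlying the \emph{non-edge} polyad systems on $\mathcal Y$ would assemble into a $(k-2)$-graph $G^{(k-2)}$ on $V(H)$ whose $K_k^{(k-2)}$-cliques coincide, up to regularity error, with the $K_k^{(k-1)}$-cliques of those non-edge polyad systems; by construction $H$ has density strictly less than $d+\eps$ on these cliques, but the $(d',\mu,k-2)$-denseness of $H$ forces density at least $d' - o(1) > d + \eps$ on $\mathcal K_k(G^{(k-2)})$, a contradiction when $\mu$, $\nu$ and $1/n$ are small enough.

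With $(d+\eps)$-density of $\mathcal A$ in hand, the hypothesis provides a reduced map $(\phi,\psi): F \to \mathcal A$, which assigns each vertex of $F$ to a vertex class $V_{\phi(v)}$ and each $(k-1)$-face $S \in \partial F$ to a regular polyad $\psi(S)$, in such a way that every edge of $F$ is mapped to a coherent polyad system of relative $H$-density at least $d+\eps \gg \nu$. The dense counting lemma for hypergraph regularity then yields $\Omega(n^{|V(F)|})$ copies of $F$ in $H$ realising this assignment, so $H$ is not $F$-free and $\pi_{k-2}(F) \le d$. The main obstacle is the density-transfer step: one must engineer the $(k-2)$-graph $G^{(k-2)}$ so that its $K_k^{(k-2)}$-cliques essentially capture precisely the bad polyad systems on $\mathcal Y$, while accounting for the cascade of regularity errors propagating from level $2$ through level $k-1$; once the parameter hierarchy is correctly arranged, every other ingredient, namely the regularity lemma and the counting lemma for $k$-graphs, is standard.
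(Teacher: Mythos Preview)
Your overall plan is right, but the construction of the reduced $k$-graph $\mathcal A$ has a genuine gap that prevents the density transfer from going through. You set $\mathcal P_{\mathcal X}$ equal to the set of \emph{all} level-$(k-1)$ cells supported on the classes indexed by $\mathcal X$, and then declare a $k$-tuple of such cells to be an edge of $\mathcal A_{\mathcal Y}$ only when it is ``coherent'' at level $k-2$ and carries high $H$-density. But the fraction of coherent $k$-tuples among all elements of $\prod_{\mathcal X\in[\mathcal Y]^{k-1}}\mathcal P_{\mathcal X}$ is already tiny (of order a negative power of $a_2,\dots,a_{k-2}$), so no matter how dense $H$ is, your $\mathcal A$ can never be $(d+\eps)$-dense in the sense of Definition~\ref{def-reduced-graph}, and the hypothesis of the theorem never applies. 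Your proposed contradiction argument does not escape this: the level-$(k-2)$ blocks underlying the non-edge polyad systems cannot be assembled into a single $(k-2)$-graph $G^{(k-2)}$ whose $K_k^{(k-2)}$-cliques capture \emph{precisely} those systems, because distinct coherent polyad systems on the same $\mathcal Y$ can share some but not all of their $(k-2)$-cells, so at level $k-2$ the edge and non-edge systems become inextricably entangled.

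The paper's remedy is to first choose a \emph{transversal} $T$ of the vertex partition and, through it, a single $(k-2)$-cell for every $(k-2)$-subset of classes; $\mathcal P_{\mathcal X}$ is then defined to be only the $a_{k-1}$ many $(k-1)$-cells lying above this fixed $(k-2)$-polyad. With the $(k-2)$-layer pinned down, every $k$-tuple in $\prod\mathcal P_{\mathcal X}$ is automatically coherent, and the density of $\mathcal A_{\mathcal Y}$ becomes simply the proportion of $(k-1)$-polyads carrying high $H$-density. The density transfer is then a direct count rather than a contradiction: the number of edges of $H$ supported on the chosen $(k-2)$-cells over $\mathcal Y$ is at least $(d+2\eps)$ times the number of cliques there (here the paper also passes, via Proposition~\ref{pro-turan-trans}, to the equivalent $(d,\mu,[k]^{k-2})$-dense notion, which matters because for $k=3$ a single $(k-2)$-graph cannot isolate the crossing $k$-cliques on $\mathcal Y$), and since each polyad supports at most a fixed number of cliques, at least a $(d+\eps)$-fraction of the polyads must be encoded as edges of $\mathcal A_{\mathcal Y}$. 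The transversal is the missing idea; once you insert it, the rest of your outline (reduced map followed by the embedding lemma) matches the paper.
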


We also remark that parts of the proof of this result are implicit in~\cite{RRS-Mantel}. Still, we believe it to be useful
to gather the argument in its entirety. Theorem~\ref{low-k-graphs} and Theorem~\ref{thm-turan-reduced} serves as a general tool for the Tur\'an problem in $(d,\mu,k-2)$-dense $k$-graphs. In particular, when $k=3$, this tool is widely used in~\cite{4/27,1/27,vanishing,k43minus-2}.

Next, inspired by the research of Garbe, Kr\'al' and Lamaison~\cite{1/27}, we answer Problem~\ref{problem2}
by giving a non-trivial sufficient condition for  $k$-graphs $F$ satisfying $\pi_{k-2}(F)=k^{-k}$, and construct an infinite family of $k$-graphs with $\pi_{k-2}(F)=k^{-k}$.

\begin{theorem}\label{1/kthm}
Given $k\ge 3$, let $F$ be a $k$-graph satisfying the following conditions:
\stepcounter{propcounter}
\begin{enumerate}[label=$(\roman*)$]
\item[{\rm ($\clubsuit$)}] \label{th-condition-1}
$F$ has no vanishing ordering of $V(F)$; 
\item[{\rm ($\spadesuit$)}] \label{th-condition-2} For each pair $\{i,j\}\in [k]^2$ with $i<j$, $F$ can always be partitioned into two spanning subhypergraphs $F^1_{i,j}$ and $F^2_{i,j}$ such that there exists an ordering of $V(F)$ that is vanishing both for $F^1_{i,j}$ and $F^2_{i,j}$
and for any two edges $e_1\in E(F^1_{i,j})$ and $e_2\in E(F^2_{i,j})$ with $|e_1\cap e_2|=k-1$,  $e_1\cap e_2$ is $i$-type w.r.t. $F^1_{i,j}$ and $j$-type w.r.t. $F^2_{i,j}$. 
	\end{enumerate}
	Then $\pi_{k-2}(F)=k^{-k}$.
\end{theorem}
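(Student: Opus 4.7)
The proof splits into a lower and an upper bound.

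For the lower bound $\pi_{k-2}(F) \ge k^{-k}$: by condition $(\clubsuit)$, $F$ admits no vanishing ordering of $V(F)$, so Theorem~\ref{thm-k-2-zero-density} gives $\pi_{k-2}(F) > 0$, and Corollary~\ref{lower-bound} then promotes this to $\pi_{k-2}(F) \ge k^{-k}$.

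For the matching upper bound $\pi_{k-2}(F) \le k^{-k}$, my plan is to invoke Theorem~\ref{thm-turan-reduced}: it suffices to show that for every $\eps > 0$ there exists $m_0 = m_0(F,\eps)$ such that every $(k^{-k} + \eps)$-dense $m$-reduced $k$-graph $\mathcal A$ with $m \ge m_0$ admits a reduced map from $F$. Fix such an $\mathcal A$ with index set $[m]$, vertex classes $\mathcal P_{\mathcal X}$ ($\mathcal X \in [m]^{k-1}$), and constituents $\mathcal A_{\mathcal Y}$ ($\mathcal Y \in [m]^k$). I would partition each $\mathcal P_{\mathcal X}$ uniformly at random into $k$ color classes $\mathcal P_{\mathcal X}^{(1)}, \dots, \mathcal P_{\mathcal X}^{(k)}$; every edge of a constituent $\mathcal A_{\mathcal Y}$ with $\mathcal Y = \{y_1 < \dots < y_k\}$ then acquires a \emph{pattern} $(c_1, \dots, c_k) \in [k]^k$, where $c_\ell$ records the color class of the vertex lying in $\mathcal P_{\mathcal Y \setminus \{y_\ell\}}$. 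The rainbow pattern $(1, \dots, k)$ is precisely the one realised by the probabilistic construction achieving density $k^{-k}$ used in Corollary~\ref{lower-bound}; since every constituent has density strictly above $k^{-k}$, a first-moment calculation together with a concentration argument (Azuma--Hoeffding, as employed in Theorem~\ref{low-k-graphs}) supplies a coloring under which each constituent contains linearly many edges of at least one non-rainbow pattern.

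Now label each $\mathcal Y \in [m]^k$ by the set of non-rainbow patterns realised on a positive fraction of $E(\mathcal A_{\mathcal Y})$---a label taking at most $2^{k^k - 1}$ possible values---and invoke the multicolored Ramsey theorem for $k$-uniform hypergraphs to pass to a large $J \subseteq [m]$ on which this label is constant. On $J$, every constituent then simultaneously carries a linear fraction of rainbow edges and a linear fraction of edges in some common non-rainbow pattern $(t_1, \dots, t_k)$. A density-preserving color-relabelling within each vertex class lets me further assume that $(t_1, \dots, t_k)$ is a transposition at two coordinates $i < j$, that is $t_j = i$, $t_i = j$, and $t_\ell = \ell$ otherwise. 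Feeding this pair $(i,j)$ into condition $(\spadesuit)$ yields the decomposition $F = F^1_{i,j} \cup F^2_{i,j}$ with a common vanishing ordering. The embedding then routes edges of $F^1_{i,j}$ through the rainbow-pattern edges of $\mathcal A$ restricted to $J$, and edges of $F^2_{i,j}$ through the transposition-pattern edges; crucially, the crossing hypothesis in $(\spadesuit)$---shared $(k-1)$-shadows being $i$-type w.r.t. $F^1_{i,j}$ and $j$-type w.r.t. $F^2_{i,j}$---precisely enforces $t_j = i$, so a single value of $\psi$ on each shadow of $\partial F$ meets the color demands coming from both sub-hypergraphs simultaneously. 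The reduced map $(\phi, \psi)$ is then built one vertex of $V(F)$ at a time by a standard greedy argument, using supersaturation inside the positive-density rainbow and transposition-pattern subgraphs of each constituent on $J$.

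The main obstacles I anticipate are twofold. First, the Ramsey step delivers \emph{some} common non-rainbow pattern, not necessarily a transposition, so reducing to a transposition requires either iterating the coloring/Ramsey reduction or exploiting permutation symmetries of the color classes, while simultaneously ensuring that the resulting pair $(i,j)$ remains one for which condition $(\spadesuit)$ can be applied. Second, one must verify that the rainbow and transposition densities genuinely \emph{coexist} on the same index set $J$ and that the greedy construction of $(\phi, \psi)$ respects the type-consistency imposed by the common vanishing ordering of $F^1_{i,j}$ and $F^2_{i,j}$; this is precisely where the interplay between $(\clubsuit)$---which forbids $F$ from admitting a vanishing ordering and so forces genuine use of the non-rainbow structure---and $(\spadesuit)$---which provides two vanishing sub-decompositions whose crossings match a single coordinate swap---becomes the heart of the argument.
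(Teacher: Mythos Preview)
Your lower bound is identical to the paper's and is correct.

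For the upper bound, your overall architecture (invoke Theorem~\ref{thm-turan-reduced}, apply multicoloured Ramsey to homogenise the constituents, then build a reduced map using the decomposition from~$(\spadesuit)$) matches the paper's. The genuine gap is in how you extract structure from the density hypothesis $d>k^{-k}$. Under a uniformly random $k$-partition of each $\mathcal P_{\mathcal X}$, every pattern in $[k]^k$ receives in expectation a $k^{-k}$-fraction of the edges of each constituent, so ``each constituent has linearly many edges of a non-rainbow pattern'' is true for \emph{any} positive density and does not exploit the threshold at all. Moreover, an arbitrary non-rainbow pattern need not be a permutation (it could be $(1,1,3,\dots,k)$), and no relabelling of colour classes turns it into a transposition while preserving the rainbow pattern; so the reduction you sketch does not go through. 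Finally, even granting a common transposition pattern, you need a \emph{single} vertex $\psi(S)\in\mathcal P_{\mathcal X}$ that simultaneously participates in the rainbow edges (for $F^1_{i,j}$) and in the transposed edges (for $F^2_{i,j}$); your greedy step does not explain why such a shared vertex exists.

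The paper's mechanism is different. For each constituent $\mathcal A_{\mathcal Y}$ and each $\ell\in[k]$ it considers the set $\mathcal S^{\rho}_{\mathcal Y\setminus\{y_\ell\}\to y_\ell}$ of vertices in $\mathcal P_{\mathcal Y\setminus\{y_\ell\}}$ with normalised degree at least $\rho$. After one Ramsey pass (Lemma~\ref{lem-1}) these sets have proportions $t_1,\dots,t_k$ independent of $\mathcal Y$. The threshold now enters via AM--GM: if $\sum_\ell t_\ell\le 1$ then $\prod_\ell t_\ell\le k^{-k}$, contradicting the density hypothesis (Lemma~\ref{lem-2}). Hence $\sum_\ell t_\ell>1$, so by inclusion--exclusion two of the degree sets overlap on a $\rho$-fraction of $\mathcal P_{\mathcal X}$, yielding a pair $\{i',j'\}$ and a set of vertices each of which has high degree both as an $i'$-type and as a $j'$-type vertex. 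A second Ramsey pass (Lemma~\ref{lem-4}) fixes one such vertex $\alpha^{i'}_{\mathcal X}$ in every class; iterated applications of a link-density lemma (Lemma~\ref{lem-5}) together with Lemma~\ref{lem-3} then extend $\alpha^{i'}_{\mathcal X}$ to full tuples $(\alpha^1_{\mathcal X},\dots,\alpha^k_{\mathcal X})$ and $(\beta^1_{\mathcal X},\dots,\beta^{k-1}_{\mathcal X})$ witnessing edges in every constituent (Lemma~\ref{lem-Embedding-reduced}). The reduced map is then written down explicitly, not greedily: $\psi(S)=\alpha^r_{\phi(S)}$ if $S$ is $r$-type in $F^1_{i',j'}$, and $\psi(S)=\beta^t_{\phi(S)}$ (with an index shift) if $S$ is $t$-type in $F^2_{i',j'}$; the shared vertex $\alpha^{i'}_{\mathcal X}$ handles exactly the crossing shadows. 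So the missing idea in your proposal is the AM--GM step producing the overlap of two degree sets---this is where $k^{-k}$ genuinely enters.
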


%
%

Furthermore, based on Theorem~\ref{1/kthm}, we construct an infinite family of $k$-graphs $F$ which satisfy the conditions given in Theorem~\ref{1/kthm}.

\begin{theorem}\label{example-thm} 
Given integers $t\ge k-2\ge 1$,  let $F^{(k)}_t$ be the $k$-graph consisting of $(3t+k+2)$ vertices $a_1, a_2, \dots, a_{k-1}$, $b_0,  b_1, \dots, b_t, c_0,  c_1, \dots, c_t, d_0,  d_1, \dots, d_t$ and the following $3(t+2)$ edges:
	\begin{center} 
		$a_1\dots a_{k-1}b_0,~~ a_2\dots a_{k-1}b_0b_1, ~\dots, ~ a_{k-1}b_0\dots b_{k-2}, ~b_0b_1\dots b_{k-1},~\dots, ~b_{t-k+1}\dots b_t, ~b_{t-k+2}\dots b_tc_t,$
		
		$a_1\dots a_{k-1}c_0,~~ a_2\dots a_{k-1}c_0c_1, ~\dots, ~ a_{k-1}c_0\dots c_{k-2}, ~c_0c_1\dots c_{k-1},~\dots, ~c_{t-k+1}\dots c_t, ~c_{t-k+2}\dots c_td_t,$
		
		$a_1\dots a_{k-1}d_0,~~ a_2\dots a_{k-1}d_0d_1,~\dots,~ a_{k-1}d_0\dots d_{k-2}, ~d_0d_1\dots d_{k-1},~\dots, ~d_{t-k+1}\dots d_t, ~d_{t-k+2}\dots d_tb_t.$
	\end{center}
	We have $\pi_{k-2}(F^{(k)}_t)=k^{-k}$.
\end{theorem}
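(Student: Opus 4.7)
The strategy is to invoke Theorem~\ref{1/kthm} applied to $F = F^{(k)}_t$: it suffices to verify that $F^{(k)}_t$ satisfies both structural conditions $(\clubsuit)$ and $(\spadesuit)$, after which $\pi_{k-2}(F^{(k)}_t) = k^{-k}$ is immediate.

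For $(\clubsuit)$, I argue by contradiction. Suppose $\tau$ is a vanishing ordering of $V(F^{(k)}_t)$. The three initial edges $A \cup \{x_0\}$ (for $x \in \{b, c, d\}$) all contain the common $(k-1)$-set $A = \{a_1, \ldots, a_{k-1}\}$, and consistency of its type forces $b_0$, $c_0$, $d_0$ to occupy the same position under $\tau$ in their respective edges. Along each of the tight paths $B$, $C$, $D$, every pair of consecutive edges shares a $(k-1)$-subset, and type consistency yields a positional equality between the left-dropped vertex of the earlier edge and the right-added vertex of the later one. Iterating these identities along path $B$ translates the starting configuration on $A \cup \{b_0\}$ into a control of the relative position of the bridging vertex $c_t$ ending $B$; symmetric statements hold for $d_t$ along $C$ and for $b_t$ along $D$. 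Composing the three propagations around the cycle $B \to C \to D \to B$ produces a strict cyclic chain of inequalities (for instance $b_1 < c_1 < d_1 < b_1$ in the smallest case $k = 3$, $t = 1$), a contradiction.

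For $(\spadesuit)$, given $(i, j) \in [k]^2$ with $i < j$ I exhibit an explicit partition together with a common ordering. The key idea is that the cyclic obstruction of $(\clubsuit)$ is broken by interrupting at least one of the three path propagations, which I do by cutting paths $C$ and $D$ between their first and second edges. Concretely, I place all of path $B$ together with the initial edges $e_1^C$ and $e_1^D$ into $F^1_{i,j}$, and the remaining suffixes of paths $C$ and $D$ into $F^2_{i,j}$. Under this split the only $(k-1)$-subsets that straddle the two parts are the two shadows $\{a_2, \ldots, a_{k-1}, c_0\}$ and $\{a_2, \ldots, a_{k-1}, d_0\}$; all other consecutive and three-way overlaps lie strictly inside one part. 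The ordering $\sigma_{i,j}$ is then chosen so that (a) $b_0$, $c_0$, $d_0$ are placed above $a_{k-1}$, making $A$ of type $k$ inside $F^1_{i,j}$; (b) $a_1$ is the $i$-th smallest among $\{a_1, \ldots, a_{k-1}\}$, so that each shadow $\{a_2, \ldots, a_{k-1}, x_0\}$ in $e_1^X$ has type $i$; (c) $c_1$ and $d_1$ occupy position $j$ in $e_2^C$ and $e_2^D$ respectively, giving the required type-$j$ condition on the $F^2_{i,j}$-side of the two cross-overlaps; (d) path $B$ is made vanishing by the standard vanishing-ordering recursion for tight paths, compatibly with (a)--(c); and (e) the path-$C$ and path-$D$ suffixes in $F^2_{i,j}$ inherit vanishing from the corresponding propagations starting at $e_2^C$ and $e_2^D$. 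In the smallest case $k = 3$, $t = 1$, $(i, j) = (1, 2)$, the explicit ordering $a_1 < b_1 < a_2 < d_1 < c_1 < b_0 < c_0 < d_0$ provides a witness.

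The main obstacle is carrying out $(\spadesuit)$ uniformly. While $(\clubsuit)$ follows from a single clean cyclic argument, $(\spadesuit)$ requires, for each pair $(i, j)$, a simultaneously coherent choice of partition, global ordering, and propagation along each tight-path segment, plus verification that (i) each part is vanishing under $\sigma_{i,j}$, (ii) the three-way overlap on $A$ is consistent within $F^1_{i,j}$, and (iii) each of the two cross-overlaps acquires exactly the prescribed $(i, j)$-types. Ensuring that the vanishing orderings of the various tight-path segments can be synchronised under a single global order, uniformly in $k$ and $t \geq k - 2$, is the technical heart of the proof.
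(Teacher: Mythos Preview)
Your overall strategy is correct and matches the paper: verify $(\clubsuit)$ and $(\spadesuit)$ of Theorem~\ref{1/kthm}. The differences lie in how each condition is handled.

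For $(\clubsuit)$, your direct cyclic-inequality argument is valid in the $k=3$ case you illustrate, but the propagation step (``the left-dropped vertex and the right-added vertex occupy the same position'') does not by itself produce a single chain of strict inequalities when $k\ge 4$: at each step you only learn that the new vertex has the same \emph{rank} as the dropped vertex relative to the $k-1$ shared vertices, which does not immediately compare the new vertex to the dropped one. The paper avoids this by first proving a structural equivalence (Lemma~\ref{example-lem}): a vanishing ordering exists iff there is a $k$-edge-coloured simple $(k-1)$-digraph on $V(F)$ whose edges record the cyclic orders, such that for some consecutive colour pair $\{\beta,\beta+1\}$ the associated transitive digraph is acyclic. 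One then checks, uniformly in $k$ and $t$, that for \emph{every} consecutive pair this transitive digraph contains a directed cycle passing through $b_t,c_t,d_t$ (or through $b_{t-1},c_{t-1},d_{t-1}$). This packages your propagation into a single acyclicity test and removes the need to chase inequalities along all three paths simultaneously.

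For $(\spadesuit)$, your partition works in the small cases you checked, but it is substantially harder than necessary: you create \emph{two} cross-overlaps and must synchronise the vanishing orderings of three path-segments (all of path $B$ in $F^1$, two suffixes in $F^2$) under one global order. The paper's partition is far simpler: take $F_1$ to consist of the \emph{single} edge $e_1=\{d_{t-k+2},\dots,d_t,b_t\}$ (the last edge of path $D$) and $F_2=F\setminus\{e_1\}$. Then $F_1$ is vacuously vanishing under any ordering, $\partial F_1\cap\partial F_2=\{S\}$ with $S=\{d_{t-k+2},\dots,d_t\}$ the unique cross-overlap, and the task reduces to exhibiting, for each $(i,j)$, a vanishing ordering of $F_2$ in which $S$ is $j$-type while simultaneously $b_t$ sits in position $i$ of $e_1$. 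Since $F_2$ is just three tight paths glued at $A$ minus one terminal edge, such orderings are produced explicitly via a block decomposition $X_\ell=\{x_r: r\equiv t+\ell\pmod k\}$ of the vertex set. Adopting this single-edge split will remove what you correctly identify as the technical heart of your argument.
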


\begin{remark}
A {\it tight $k$-uniform path} of length $\ell \ge k$, is a sequence $(v_1, v_2 \dots, v_\ell)$ of distinct vertices, satisfying  that $\{v_i, \dots, v_{i+k-1}\}$ is an edge for every $i\in [\ell-k+1]$. Clearly, the $k$-graph $F^{(k)}_t$ in the statement of Theorem~\ref{example-thm} can be viewed as consisting of the following three tight $k$-uniform  paths of length $(t+k+1)$:
\[
(a_1, \dots,  a_{k-1}, b_0, b_1, \dots, b_t, c_t),
(a_1, \dots,  a_{k-1}, c_0, c_1, \dots, c_t, d_t) \text{~and~}
(a_1, \dots,  a_{k-1}, d_0, d_1, \dots, d_t, b_t).
\] 
In particular, when $k=3$, the $3$-graphs $F^{(3)}_t$ are exactly the family of  $3$-graphs given by Garbe, Kr\'al' and Lamaison~\cite{1/27}.
In addition, the smallest $k$-graph $F^{(k)}_t$  has $(4k-4)$ vertices and $3k$ edges (see Figure~\ref{figure-1}).
\end{remark}

\begin{figure}
\begin{center}
\begin{tikzpicture}
[inner sep=2pt,
vertex/.style={circle, draw=blue!50, fill=blue!50},
]
\filldraw[black] (0,0) circle (3pt);
\filldraw[black] (2,0) circle (3pt);
\filldraw[black] (3.8,0) circle (3pt);

\filldraw[black] (6,1) circle (3pt);
\filldraw[black] (8,1) circle (3pt);
\filldraw[black] (10,1) circle (3pt);

\filldraw[black] (6,0) circle (3pt);
\filldraw[black] (8,0) circle (3pt);
\filldraw[black] (10,0) circle (3pt);

\filldraw[black] (6,-1) circle (3pt);
\filldraw[black] (8,-1) circle (3pt);
\filldraw[black] (10,-1) circle (3pt);

\draw[line width=5pt,red,opacity=0.5] (0, 0) .. controls (2,0) and (5.5,0).. (6,1);
\draw[line width=5pt,red,opacity=0.5] (6,1) --  (10,1);
\draw[line width=5pt,red,opacity=0.5] (10,1) --  (10,0);

\draw[line width=5pt,blue,opacity=0.5] (0, 0) --  (10,0);
\draw[line width=5pt,blue,opacity=0.5] (10,0) --  (10,-1);

\draw[line width=5pt,green,opacity=0.5] (0, 0) .. controls (2,0) and (5.5,0).. (6,-1);
\draw[line width=5pt,green,opacity=0.5] (6,-1) --  (10,-1);
\draw[line width=5pt,green,opacity=0.5] (10,-1) .. controls (11,0) .. (10,1);

\node at (0,-0.5) {$a_1$};
	\node at (2,-0.5) {$a_2$};
	\node at (3.8,-0.5) {$a_3$};
	
		\node at (6,0.5) {$b_0$};
		\node at (8,0.5) {$b_1$};
	\node at (10.2,1.3) {$b_2$};
			
		\node at (6,-0.5) {$c_0$};
		\node at (8,-0.5) {$c_1$};
	\node at (9.8,-0.5) {$c_2$};
	
	\node at (6,-1.5) {$d_0$};
	\node at (8,-1.5) {$d_1$};
\node at (10.2,-1.3) {$d_2$};

\end{tikzpicture}
\caption{An illustration of the smallest $k$-graph $F^{(k)}_t$  for the case $k=4$ and $t=2$.}\label{figure-1}
\end{center}
\end{figure}
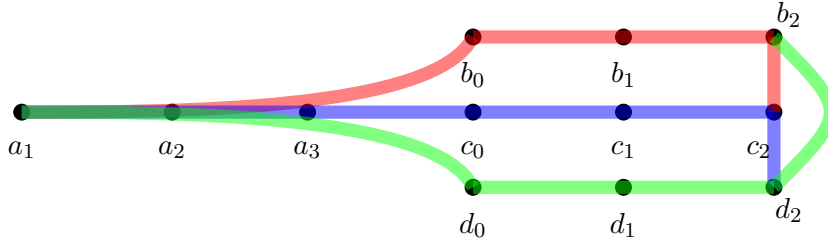

\subsection*{Organization}
The rest of this paper is organized as follows. 
In the next section, we give a probabilistic construction to prove Theorem~\ref{low-k-graphs}.
A key tool in the proof of Theorem~\ref{thm-turan-reduced} is the hypergraph regularity method. 
Therefore, in the Section~\ref{sec-regular-method}, we will review the regularity method for $k$-graphs, which as an extension of Szemer\'edi's regularity lemma for graphs, has been a celebrated tool for embedding problems in hypergraphs. We use a popular version of the regularity lemma for $k$-graphs due to R\"{o}dl and Schacht~\cite{regularity-lemmas} (a similar result was proved earlier by Gowers~\cite{G07}),
and with it we derive a ``clean" version of the regularity lemma for $k$-graphs (see Corollary~\ref{clean}).
In Section~\ref{sec-Reduced-pf}, we will 
give the proof of Theorem~\ref{thm-turan-reduced} using Corollary~\ref{clean} and an embedding lemma from~\cite{Embeddings}. 
In Section~\ref{sec-Reduced-embed}, we  prove a number of auxiliary
results and use them to prove an embedding lemma of reduced $k$-graphs (see Lemma~\ref{lem-Embedding-reduced}), which is the key to prove
Theorem~\ref{1/kthm}.
Finally, in Section~\ref{sec-example}, we give an equivalent transformation of vanishing ordering, and combine with Theorem~\ref{1/kthm} to give the proof of Theorem~\ref{example-thm}.
Some remarks and open problems will be given in the last section.

\section{Proof of Theorem~\ref{low-k-graphs}}
In this section, we shall prove Theorem~\ref{low-k-graphs}. To do this, we need the following lemma, also known as the Azuma-Hoeffding inequality from~\cite[Corollary 2.27]{random-graph}.

\begin{lemma}\label{Azuma}
	Let $Z_1, \dots, Z_n$ be independent random variables, with $Z_i$ taking values in a set $C_i$ for $i\in [n]$. Assume that a function $f: C_1\times C_2 \times \dots \times C_n \to \mathbb R$ satisfies the following Lipschitz condition for some number $c_i$:
	\begin{enumerate}
		\item[{\rm (L)}] If two vectors $\mathbf{z}, \mathbf{z'}\in \prod^n_1C_i$ differ only in the $i$th coordinate, then $|f(\mathbf{z})-f(\mathbf{z'})|\le c_i$.
	\end{enumerate}
	Then, the random variable $Y=f(Z_1, \dots, Z_n)$ satisfies, for any $\eta\ge 0$
	\[
	\mathbb P(Y\le \mathbb E (Y)-\eta)\le \exp (-\frac{\eta^2}{2\sum_1^nc_i^2}).
	\]
\end{lemma}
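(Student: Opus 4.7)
The plan is the classical Doob-martingale proof of the bounded-differences inequality. I would first set up $M_j := \mathbb E[Y \mid Z_1, \dots, Z_j]$ for $j = 0, 1, \dots, n$, so that $M_0 = \mathbb E Y$ is deterministic, $M_n = Y$, and $Y - \mathbb E Y = \sum_{j=1}^{n} D_j$ with $D_j := M_j - M_{j-1}$ forming a martingale difference sequence relative to the natural filtration generated by $Z_1, \dots, Z_n$.

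The main technical step is to establish $|D_j| \le c_j$ almost surely. Because the $Z_i$ are independent, one may write $M_j = g_j(Z_1, \dots, Z_j)$ where $g_j(z_1, \dots, z_j) := \mathbb E[f(z_1, \dots, z_j, Z_{j+1}, \dots, Z_n)]$ is the integral over the tail variables. The Lipschitz hypothesis (L) applied pointwise to $f$ and then integrated over $(Z_{j+1}, \dots, Z_n)$ shows that $g_j$ oscillates by at most $c_j$ in its $j$-th coordinate, uniformly in the earlier ones. Since $M_{j-1} = \mathbb E[g_j(Z_1, \dots, Z_{j-1}, Z_j) \mid Z_1, \dots, Z_{j-1}]$ is an average of such values, conditionally on $(Z_1, \dots, Z_{j-1})$ the increment $D_j = g_j(Z_1, \dots, Z_j) - M_{j-1}$ lies in an interval of length at most $c_j$ containing $0$, whence $|D_j| \le c_j$.

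From there I would follow the standard Chernoff--Hoeffding routine. Applying Hoeffding's lemma conditionally to the mean-zero random variable $-D_j$, which lies in $[-c_j, c_j]$, yields $\mathbb E[\exp(-tD_j) \mid Z_1, \dots, Z_{j-1}] \le \exp(t^2 c_j^2 / 2)$ for every $t \ge 0$; iterating through the filtration via the tower property gives $\mathbb E \exp(-t(Y - \mathbb E Y)) \le \exp\bigl(t^2 \sum_{j=1}^{n} c_j^2 / 2\bigr)$. Markov's inequality then produces $\mathbb P(Y \le \mathbb E Y - \eta) \le \exp\bigl(-t\eta + t^2 \sum_{j} c_j^2 / 2\bigr)$, and optimizing with $t = \eta / \sum_j c_j^2$ delivers the advertised bound $\exp(-\eta^2 / (2\sum_j c_j^2))$. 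The main obstacle is the middle paragraph --- converting the coordinate-wise Lipschitz condition (L) on $f$ into a conditional range bound on the Doob increments $D_j$; everything else is routine. Because this is a classical inequality, the paper very reasonably states it as a black-boxed citation rather than reproducing the proof.
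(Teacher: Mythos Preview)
Your proof sketch is correct, and your closing remark is exactly right: the paper does not prove this lemma at all but simply cites it as \cite[Corollary~2.27]{random-graph} and uses it as a black box. There is nothing further to compare.
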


Now we prove Theorem~\ref{low-k-graphs} using the following construction.
\begin{proof}[Proof of Theorem~\ref{low-k-graphs}]
Let $F$ be a $k$-graph satisfying the statement given in Theorem~\ref{low-k-graphs}.
For any $n\in \mathbb N$,
we consider $\psi: [n]^{k-1} \to [r]$ as a
random $r$-coloring with each color associated to a $(k-1)$-set with probability $1/r$ independently and uniformly. We now define a probability distribution $H(n)$ on $k$-graphs of order $n$ as follows.  Let $V(H(n))=[n]$, and include a $k$-set $e=\llbracket x_1,\dots, x_k\rrbracket \in  [n]^k$ in $E(H(n))$ if $e$ satisfies 
\[ \left( \psi(e\setminus \{x_{1}\}), \psi(e\setminus \{x_{2}\}), \dots, \psi(e\setminus \{x_{k}\}) \right)\in \mathscr P.
\] 
Let $E=E(H(n))$ be the random set of $k$-edges of $H(n)$.
Observe that for each $k$-set $e\in [n]^k $, the probability of the event ``$e\in E$" is $|\mathscr P|/r^k$.
Moreover, for every $(k-1)$-set $X_t\in [n]^{k-1}$ with $1\le t\le  \binom{n}{k-1}$, we can view $\psi (X_{t})$ as an independent random variable with $\phi (X_{t})$ taking values in set $[r]$.
For each $(k-2)$-graph $G^{(k-2)}$ on vertex set $[n]$, let $Y$ denote the random variable $|\mathcal{K}_k(G^{(k-2)})\cap E(H(n))|$. Then $Y$ may be regarded as a function of $\psi (X_{1})\times \psi (X_{2})\times \dots\times \psi (X_{\binom{n}{k-1}})$. In particular, by changing the value of one $\phi (X_{t})$
we can change $Y$ by at most $n$.
Therefore, by Lemma~\ref{Azuma}, the probability of the bad event happening is
\[
\mathbb P(Y\le \mathbb E (Y)-\mu n^k)=\mathbb P(Y \le \frac{|\mathscr P|}{r^k }|\mathcal{K}_k(G^{(k-2)})|-\mu n^k)\le \exp(-\frac{(\mu n^k)^2}{2\binom{n}{k-1}n^2})=\exp(-\Omega(n^{k-1})).
\]
In addition, there are at most $2^{n^{k-2}}<\exp(n^{k-2})$ possible choices for  $G^{(k-2)}$. 
By the union bound,  the probability of the event ``$H(n)$ is not $({|\mathscr P|}/{r^k}, \mu, k-2)$-dense" is at most $\exp(n^{k-2}-\Omega(n^{k-1}))=o(1)$. 
Therefore, for every $\mu> 0$ and sufficiently large $n$,
there exists $H\in H(n)$ is $({|\mathscr P|}/{r^k}, \mu, k-2)$-dense.
Recalling the condition given in Theorem~\ref{low-k-graphs},  $H$ is also $F$-free. 
Thus, $\pi_{k-2}(F)\ge |\mathscr P|/r^k$.
\end{proof}

\section{The hypergraph regularity method}\label{sec-regular-method}
In this section, we state the hypergraph regularity lemma and an accompanying embedding lemma. Here we follow the approach from R\"{o}dl and Schacht~\cite{regularity-lemmas}, combined with results from~\cite{count-dense, RRS-Mantel}. Meanwhile, we derive a ``clean" version of the regularity lemma for $k$-graphs (see Corollary~\ref{clean}).
The central concepts of hypergraph regularity lemma are regular complexes and equitable partition. Before we state the hypergraph regularity lemma, we introduce some necessary notation below.
For reals $x,y,z$ we write $x = y \pm z$ to denote that $y-z \leq x \leq y+z$.

\subsection{Regular complexes}
A \emph{mixed hypergraph} $\mathcal{H}$ consists of a vertex set $V(\mathcal{H})$ and an edge set $E(\mathcal{H})$, where every edge $e \in E(\mathcal{H})$ is a non-empty subset of $V(\mathcal{H})$. So a $k$-graph as defined earlier is a $k$-uniform hypergraph in which every edge has size $k$. We call a mixed hypergraph $\mathcal{H}$ a \emph{complex} if every non-empty subset of every edge of $\mathcal{H}$ is also an edge of $\mathcal{H}$.  Note that all complexes considered in this paper have the property that all vertices are contained in an edge. A complex is a \emph{$k$-complex} if its all the edges consist of at most $k$ vertices. Given a $k$-complex $\mathcal{H}$, for each $i \in [k]$, the edges of size $i$ are called \emph{$i$-edges} of $\mathcal{H}$ and we denote by $H^{(i)}$ the \emph{underlying $i$-graph} of $\mathcal{H}$: the vertices of $H^{(i)}$ are those of $\mathcal{H}$ and the edges of $H^{(i)}$ are the $i$-edges of $\mathcal{H}$. Note that every $k$-graph $H$ can be turned into a $k$-complex by making every edge into a \emph{complete $i$-graph} $K^{(i)}_k$  on $k$ vertices, for each $ i\in [k]$.

Given $i \geq 2$, let an $i$-graph $H^{(i)}$ and an $(i-1)$-graph $H^{(i-1)}$ be on the same vertex set. We define the \emph{relative density} $d(H^{(i)}|H^{(i-1)})$ of $H^{(i)}$ w.r.t. $H^{(i-1)}$ to be
\[
d(H^{(i)}|H^{(i-1)}):= \begin{cases}
	\frac{|E(H^{(i)})\cap \mathcal{K}_i(H^{(i-1)})|}{|\mathcal{K}_i(H^{(i-1)})|} &\text{if\ } |\mathcal{K}_i(H^{(i-1)})|>0,\\
	0&\text{otherwise}.
\end{cases}
\]
More generally, if $\mathbf{Q}:= (Q(1), Q(2),\dots, Q(r))$ is a collection of $r$ subhypergraphs of $H^{(i-1)}$, then we define $\mathcal{K}_i(\mathbf{Q}):= \bigcup^r_{j=1}\mathcal{K}_i(Q(j))$ and
\[
d(H^{(i)}|\mathbf{Q}):= \begin{cases}
	\frac{|E(H^{(i)})\cap \mathcal{K}_i(\mathbf{Q})|}{|\mathcal{K}_i(\mathbf{Q})|} &\text{if\ } |\mathcal{K}_i(\mathbf{Q})|>0,\\
	0&\text{otherwise}.
\end{cases}
\]

Given positive integers $s \geq k$, an \emph{$(s,k)$-graph} $H^{(k)}_s$ is an $s$-partite $k$-graph, by which we mean that the vertex set of $H^{(k)}_s$ can be partitioned into sets $V_1,\dots, V_s$ such that every edge of $H^{(k)}_s$ meets each $V_i$ in at most one vertex for $i\in [s]$. Similarly, an \emph{$(s,k)$-complex} $\mathcal{H}^{\leq k}_s$ is an $s$-partite $k$-complex.

Let integer $r\ge 1$, reals $d_i\ge 0$ and $\delta>0$ be given along with an $(i,i)$-graph $H^{(i)}_i$ and an $(i, i-1)$-graph $H^{(i-1)}_i$ on the same vertex set.
We say $H^{(i)}_i$ is \emph{$(d_i, \delta, r)$-regular}  w.r.t. $H^{(i-1)}_i$ if every $r$-tuple $\mathbf{Q}$ with $|\mathcal{K}_i(\mathbf{Q})| \geq \delta|\mathcal{K}_i(H^{(i-1)}_i)|$ satisfies $d(H^{(i)}_i|\mathbf{Q}) = d_i \pm \delta$. 
Moreover, for two $s$-partite $i$-graph $H^{(i)}_{s}$ and $(i-1)$-graph $H^{(i-1)}_{s}$  on the same vertex partition $V_1 \cup\dots \cup V_s$, we say that $H^{(i)}_{s}$ is \emph{$(d_i, \delta,r)$-regular} w.r.t. $H^{(i-1)}_{s}$ if for every $\Lambda_i\in  [s]^i$ the restriction $H^{(i)}_{s}[\Lambda_i]=H^{(i)}_{s}[\cup_{\lambda\in \Lambda_i }V_{\lambda}]$ is \emph{$(d_i, \delta,r)$-regular} w.r.t. the restriction $H^{(i-1)}_{s}[\Lambda_i]=H^{(i-1)}_{s}[\cup_{\lambda\in \Lambda_i }V_{\lambda}]$.

\begin{definition}[regular complex]
	Let integers $s \ge k \ge 3$, real $\delta>0$ and $\mathbf{d}=(d_2, \dots, ,d_{k-1})\in \mathbb{R}^{[k-2]}_{\ge 0}$. We say an $(s, k-1)$-complex $\mathcal{H}^{\le k-1}_s=\{H^{(i)}_s\}^{k-1}_{i=1}$ is \emph{$(\mathbf{d}, \delta, 1)$-regular} if $H_s^{(i)}$ is $(d_i, \delta, 1)$-regular w.r.t $H_{s}^{(i-1)}$ for every $i = 2, \dots , k-1$.
\end{definition}

%
%

\subsection{Equitable partitions}\label{section-equ-partition}
Suppose that $V$ is a finite vertex set and $\mathcal{P}^{(1)}=\{V_1, \dots , V_{a_1}\}$ is a partition of $V$, which will be called \emph{clusters}. Given $k \geq 3$ and any $j \in [k]$, we denote by $\mathrm{Cross}_j = \mathrm{Cross}_j (\mathcal{P}^{(1)})$, the family of all crossing $j$-sets $J\in [V]^k$ with $|J\cap V_i|\le 1$ for every $V_i\in \mathcal{P}^{(1)}$. 
For every  index set $\Lambda \subseteq [a_1]$ with $2 \leq |\Lambda| \leq k-1$, we write $\mathrm{Cross}_{\Lambda}$ for the family of all $ |\Lambda|$-sets of $V$ that meet each $ V_i$ with $i \in \Lambda$. Let $\mathcal{P}_{\Lambda}$ be a partition of $\mathrm{Cross}_{\Lambda}$. We refer to the partition classes of $\mathcal{P}_{\Lambda}$ as \emph{$ |\Lambda|$-cells}. For each $i = 2, \dots , k-1$, let $\mathcal{P}^{(i)}$ be the union of all the $\mathcal{P}_{\Lambda}$ with $|\Lambda| = i$. So $\mathcal{P}^{(i)}$ is a partition of $\mathrm{Cross}_i$ into several $(i,i)$-graphs.

Set $1 \leq i <j\le k$. Note that for every $i$-set $I\in \mathrm{Cross}_i$, there exists a unique $i$-cell $P^{(i)}_I\in \mathcal{P}^{(i)}$ so that $I\in P^{(i)}_I$. For every $j$-set $J\in \mathrm{Cross}_j$ we define the \emph{polyad} of $J$ as:
\[
\hat{P}^{(i)}_J:=\bigcup\big\{P^{(i)}_I: I\in [J]^{i}\big\}.
\]
So we can view $\hat{P}^{(i)}_J$ as a $(j,i)$-graph whose vertex classes are clusters intersecting $J$ and edge set is $\bigcup_{I\in [J]^{i}}E(P_I^{(i)})$. Let $\mathcal{\hat{P}}^{(j-1)}$ be the family of all polyads $\hat{P}^{(j-1)}_J$ for every $J\in \mathrm{Cross}_j$. It is easy to verify $\{\mathcal{K}_j(\hat{P}^{(j-1)}) : \hat{P}^{(j-1)}\in \mathcal{\hat{P}}^{(j-1)}\}$ is also a partition of $\mathrm{Cross}_j$.

\begin{definition}[family of partitions]\label{def-f-partition}
	Suppose $V$ is a vertex set, $k\ge 2$ is an integer and	$\mathbf{a} =(a_1,\dots, a_{k-1})$ is a vector of positive integers. We say $\bm {\mathcal P} = \bm{\mathcal P}(k-1, \mathbf{a} )= \{\mathcal{P}^{(1)}, \dots ,\mathcal{P}^{(k-1)}\}$ is a \textit{family of partitions} on $V$, if the following conditions hold:
\begin{enumerate}
		\item[$\bullet$]  $\mathcal{P}^{(1)}$ is a partition of $V$ into $a_1$ clusters.
		
		\item[$\bullet$] $\mathcal{P}^{(i)}$ is a partition of $\mathrm{Cross}_i$ satisfying 
		\begin{equation}\label{cells}
			|\{P^{(i)}\in \mathcal{P}^{(i)}: P^{(i)}\subseteq \mathcal{K}_i(\hat{P}^{(i-1)})\}|=a_i	
		\end{equation}
		for every $\hat{P}^{(i-1)}\in \mathcal{\hat{P}}^{(i-1)}$.	
	\end{enumerate}  
\end{definition}	

So for each $J \in \mathrm{Cross}_j$ we can view $\bigcup^{j-1}_{i=1}\hat{P}^{(i)}_J$ as a $(j, j-1)$-complex.

\begin{definition}[$(\eta, \delta, t)$-equitable]\label{def-eq-partition}
	Suppose $V$ is a set of $n$ vertices, $t\in \mathbb N$, $\mathbf{a} =(a_1,\dots, a_{k-1})\in \mathbb N^{[k-1]}$  and $\eta, 
	\delta>0$. We say a family of partitions  $\bm{\mathcal P} = \bm{\mathcal P}(k-1,\mathbf{a} )$ is \emph{$(\eta, \delta, t)$-equitable} if it satisfies the following:
\stepcounter{propcounter}
\begin{enumerate}[label = ({\bfseries \Alph{propcounter}\arabic{enumi}})]
\item\label{p1} $\mathcal{P}^{(1)}$ is a partition of $V$ into $a_1$ clusters of equal size, where $1/\eta \leq a_1 \leq t$ and $a_1$ divides $n$.
\item\label{p2} $\mathcal{P}^{(i)}$ is a partition of $\mathrm{Cross}_i$ into at most $t$ for $i=2, \dots , k-1$.
\item\label{p3} For every $k$-set $K \in \mathrm{Cross}_k$, the $(k, k-1)$-complex $\bigcup^{k-1}_{i=1}\hat{P}^{(i)}_K$ is $(\mathbf{d}, \delta,1)$-regular, where  $\mathbf{d}= (1/a_2, \dots , 1/a_{k-1})$. 
\item\label{p4}  For every $j\in [k-1] $ and every $k$-set $K \in \mathrm{Cross}_k$, we have
\[
|\mathcal{K}_k(\hat{P}^{(j)}_K)|= (1\pm \eta) \prod \limits_{\ell=1}^j (\frac{1}{a_{\ell}})^{\binom{k}{\ell}} n^k.
\]
\end{enumerate}
\end{definition}

\begin{remark} 
The condition~\ref{p3} of Definition~\ref{def-eq-partition} implies that the $i$-cells of $\mathcal{P}^{(i)}$ have almost equal size, and condition~\ref{p4} of Definition~\ref{def-eq-partition} is not a part of the statement of $(\eta, \delta, t)$-equitable from R\"{o}dl and Schacht~\cite{regularity-lemmas}.
The condition~\ref{p4}  is actually a consequence of conditions~\ref{p1} and~\ref{p3} and the so-called dense counting lemma from~\cite[Theorem 6.5]{count-dense} (see also~\cite[Theorem 3.1]{regularity-lemmas} or~\cite[Theorem 2.1]{counting-lemmas}).
\end{remark}

\subsection{Statements of the regularity lemma and embedding lemma }
Suppose $\delta_k$ is a positive real and $r$ is a positive
integer. Let $H$ be a $k$-graph on $V$ and $\bm{\mathcal P} = \bm{\mathcal P}(k-1,\mathbf{a} )$  is a family of partitions on $V$. Given a polyad $\hat{P}^{(k-1)} \in \hat{\mathcal{P}}^{(k-1)}$, we say that $H$ is \emph{$(\delta_k, r)$-regular} w.r.t. $\hat{P}^{(k-1)}$ if $H$ is $(d_k, \delta_k, r)$-regular w.r.t. $\hat{P}^{(k-1)}$ where $d_k=d(H|\hat{P}^{(k-1)})$. Finally, we define that $H$ is \emph{$(\delta_k, r)$-regular} w.r.t. $\bm{\mathcal P}$.

\begin{definition}[\emph{$(\delta_k, r)$-regular} w.r.t. $\bm{\mathcal P}$]
	We say a $k$-graph $H=(V,E)$ is \emph{$(\delta_k, r)$-regular} w.r.t. $\bm{\mathcal P}$ if
	\[
	\big|\bigcup\big\{\mathcal{K}_k(\hat{P}^{(k-1)}) : \hat{P}^{(k-1)}\in \mathcal{\hat{P}}^{(k-1)}\\
	\text{and\ } H \text{\ is\ not\ } (\delta_k, r)\text{-regular\ w.r.t.\ } \hat{P}^{(k-1)} \big\}\big| \le \delta_k |\mathrm{Cross}_k|.
	\]
\end{definition}
This means that no more than a $\delta_k$-fraction of the $k$-sets of $V$ form a $K_k^{(k-1)}$ that lies within a polyad w.r.t. which $H$ is not regular.

Now we are ready to state the regularity lemma for $k$-graphs.
\begin{theorem}[Regularity lemma~{\cite[Theorem 2.3]{regularity-lemmas}}] \label{thm-Reg-lem}
	Let $k\geq 2$ be a fixed integer. For all positive constants $\eta$ and $\delta_k$ and all functions $r:\mathbb{N}^{[k-1]} \rightarrow \mathbb{N}$ and $\delta:\mathbb{N}^{[k-1]} \rightarrow (0,1]$, there are integers $t$ and $n_0$ such that the following holds.
	For every $k$-graph $H$ of order $n\ge n_0$ and $t!$ dividing $n$, there exists a family of partitions $\bm{\mathcal P} = \bm{\mathcal P}(k-1,\mathbf{a} )$  of $V(H)$ with $\mathbf{a}=(a_1,\dots, a_{k-1})\in \mathbb{N}^{[k-1]}$  such that
	\begin{enumerate}
		\item[$(1)$] $\bm{\mathcal P}$ is $(\eta, \delta(\mathbf{a}), t)$-equitable and
		
		\item[$(2)$] $H$ is $(\delta_k, r(\mathbf{a}))$-regular w.r.t. $\bm{\mathcal P}$.
	\end{enumerate}	
\end{theorem}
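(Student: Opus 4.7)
The plan is to follow the classical mean-square (defect-increment) scheme of Szemer\'edi's original regularity proof, lifted to the hypergraph setting by R\"odl--Schacht, and to proceed by induction on the uniformity $k$. The base case $k=2$ reduces to Szemer\'edi's regularity lemma applied with appropriately small parameters. For the inductive step, I would define a mean-square \emph{index}
\begin{equation*}
\mathrm{ind}(\bm{\mathcal P}) \;=\; \frac{1}{|\mathrm{Cross}_k|}\sum_{\hat P^{(k-1)}\in \hat{\mathcal P}^{(k-1)}} d(H\mid \hat P^{(k-1)})^{2}\,|\mathcal K_k(\hat P^{(k-1)})|,
\end{equation*}
which lies in $[0,1]$, and establish an \emph{increment lemma}: whenever $H$ fails to be $(\delta_k, r(\mathbf{a}))$-regular with respect to the current family $\bm{\mathcal P}$, there is a refinement $\bm{\mathcal Q}$ that is still equitable (with slightly worse parameters) and satisfies $\mathrm{ind}(\bm{\mathcal Q}) \ge \mathrm{ind}(\bm{\mathcal P}) + c(\delta_k)$, where $c(\delta_k)>0$ depends only on $\delta_k$. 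Iterating must then terminate within at most $1/c(\delta_k)$ rounds.

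The heart of the argument is the construction of one such refinement. First, I would initialize with a trivial equitable partition (a single cluster) and, in each round, move level by level from $i=2$ up to $i=k-1$: every polyad witnessing irregularity supplies a short family $\mathbf Q$ of sub-complexes on which $d(H\mid\cdot)$ deviates from the overall polyad density by more than $\delta_k$, and I would refine the $(k-1)$-cells by intersecting with the atoms generated by those indicator systems. A standard Cauchy--Schwarz convexity computation then yields the promised index increment (of order $\delta_k^{4}$, say). To keep the refined family equitable, I would re-apply the $(k-1)$-uniform case of the regularity lemma (the inductive hypothesis) to the refined lower levels with internal parameters read off from the nested functions $r(\mathbf a)$ and $\delta(\mathbf a)$. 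Property~\ref{p4} of the equitable partition is then not verified directly but inferred from~\ref{p1} and~\ref{p3} via the dense counting lemma of~\cite{count-dense}, which pins down the number of cliques inside each polyad to the expected value up to a $(1\pm\eta)$-factor.

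The hard part will be managing the interaction between the uniformity levels: refining at level $i$ shatters polyads at every higher level, potentially destroying regularity that had already been ensured there, and the regularity parameters promised by any prior invocation must be pre-declared through the functions $r(\mathbf a)$ and $\delta(\mathbf a)$, whose domain depends on $a_1,\dots,a_{k-1}$ that are only determined at the end of the proof. I would handle this in the standard way by pre-declaring a tower-type schedule of parameters so that any later refinement fits into an already-reserved ``budget''. A secondary but genuinely delicate point is to force the top-level clusters to have exactly equal size with $a_1\mid n$; this I would resolve by a final vertex-rebalancing step that moves an $\eta$-fraction of vertices between clusters, exploiting the divisibility $t!\mid n$ and the slack built into the regularity parameter $\delta_k$.
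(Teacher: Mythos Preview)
The paper does not prove Theorem~\ref{thm-Reg-lem}; it is quoted verbatim from R\"odl--Schacht~\cite{regularity-lemmas} as a black box and then applied (in the proof of Corollary~\ref{clean}). So there is no ``paper's own proof'' to compare your proposal against.

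That said, your outline is a faithful high-level description of the R\"odl--Schacht energy-increment scheme: the mean-square index, the Cauchy--Schwarz defect argument producing an increment of order $\delta_k^4$ when regularity fails, the inductive appeal to the $(k-1)$-uniform statement to restore equitability of the lower levels after refinement, and the tower-type bookkeeping for the functional parameters $r(\cdot),\delta(\cdot)$. Your remark that property~\ref{p4} is not part of the original statement but is deduced afterwards from~\ref{p1} and~\ref{p3} via the dense counting lemma matches exactly what the present paper says in the remark following Definition~\ref{def-eq-partition}. One caveat: the base case is not literally $k=2$ in the usual graph sense---for $k=2$ the family of partitions has only the vertex partition $\mathcal P^{(1)}$ and the statement degenerates to (a slight variant of) Szemer\'edi's lemma; the genuine inductive mechanism begins at $k=3$, where one must regularize $2$-graphs underneath the $3$-graph. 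Otherwise your sketch is on target, but be aware that for the purposes of this paper you are not expected to reprove the result.
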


%

Similar to in other proofs based on the regularity method it will be convenient to ``clean" the family of partitions provided by Theorem~\ref{thm-Reg-lem}.
Given a finite set $V$ and a family of partitions $\bm{\mathcal P} = \bm{\mathcal P}(k-1,\mathbf{a} )$ on $V$
with $\mathbf{a}=(a_1,\dots, a_{k-1})$ and  $k\ge 3$,  
we call an $a_1$-set $T\subset V$ a \emph{transversal} of $\mathcal P^{(1)}$ if $T$ satisfies $|T \cap V_i| = 1$ for every $i\in [a_1]$.
Given a transversal $T$ of $\mathcal P^{(1)}$, we consider the selection 
\[
\mathcal G_T=\{P^{(k-2)}_J\in \mathcal P^{(k-2)}: J\in [T]^{k-2}\}
\]
and let 
\[
\mathcal K_k(\mathcal G_T)=\{K\in \mathrm{Cross}_k:P^{(k-2)}_J\in \mathcal G_T ~\text{ for~every~} J\in [K]^{k-2} \}
\]
be the collection of $k$-sets of $V$ that are supported by  $\mathcal G_T$.

\begin{corollary}\label{clean}
	Let $m\ge k\geq 3$ be fixed integers. For all positive constants $\eta\ll m^{-1}$ and $\delta_k<d_k$ and all functions $r:\mathbb{N}^{[k-1]} \rightarrow \mathbb{N}$ and $\delta:\mathbb{N}^{[k-1]} \rightarrow (0,1]$, there are integers $t$ and $n_0$ such that the following holds.
	For every $k$-graph $H=(V,E)$ of order $n\ge n_0$ and $t!$ dividing $n$, there exists a  subhypergraph $\hat{H}=(\hat{V},\hat{E})$ of $H$,
	and  a family of partitions $\bm{\mathcal P} = \bm{\mathcal P}(k-1,\mathbf{a} )$  of $\hat{V}$ with $\mathbf{a}=(m, a_2, \dots, a_{k-1})\in \mathbb{N}^{[k-1]}$ satisfying the following properties:
\stepcounter{propcounter}
\begin{enumerate}[label = {\rm ({\bfseries \Alph{propcounter}\arabic{enumi}})}]
	\item\label{p11} $\bm{\mathcal P}$ is $(\eta, \delta(\mathbf{a}), t)$-equitable.
	\item\label{p22} For every $k$-set $K\in {\rm Cross}_k$,  $\hat H$ is $(\delta_k, r)$-regular w.r.t.   $\hat{P}_K^{(k-1)}$, and $d(\hat H|\hat{P}_K^{(k-1)})$ is either 0 or at least $d_k$.
	\item\label{p33}  There is a transversal $T$ of $\mathcal P^{(1)}$ such that 	for each $\mathcal Y\in [m]^k$
	\[
	|\mathcal K_k(\mathcal G_T) \cap \mathrm{Cross}_{\mathcal Y}\cap \hat E|\ge |\mathcal K_k(\mathcal G_T) \cap \mathrm{Cross}_{\mathcal Y}\cap E|-2d_k|\mathcal K_k(\mathcal G_T) \cap \mathrm{Cross}_{\mathcal Y}|.
	\]
\end{enumerate}
\end{corollary}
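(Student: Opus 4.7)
My plan is to derive Corollary~\ref{clean} from Theorem~\ref{thm-Reg-lem} via a standard \emph{cleaning} procedure: apply the regularity lemma with strengthened parameters, restrict the resulting partition to $m$ carefully chosen clusters, and delete from $H$ every edge lying in a polyad that is either irregular or has density below $d_k$.

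I would begin by invoking Theorem~\ref{thm-Reg-lem} on $H$ with auxiliary parameters $\eta_0 \ll \eta$ and $\delta_k^0 \ll d_k/\binom{m}{k}^4$, along with functions $r_0(a_1^0, a_2, \dots, a_{k-1}) := r(m, a_2, \dots, a_{k-1})$ and $\delta_0(a_1^0, a_2, \dots, a_{k-1}) := \min\{\delta(m, a_2, \dots, a_{k-1}), \eta_0\}$, which lift $r$ and $\delta$ from the final vector to the enlarged one. This yields a family of partitions $\bm{\mathcal P}_0 = \bm{\mathcal P}_0(k-1, \mathbf{a}_0)$ on $V$ with $\mathbf{a}_0 = (a_1^0, a_2, \dots, a_{k-1})$ and $a_1^0 \gg m$ such that $H$ is $(\delta_k^0, r_0(\mathbf{a}_0))$-regular w.r.t.\ $\bm{\mathcal P}_0$. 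I would then fix an $m$-subset $I^\ast \subset [a_1^0]$, set $\hat V := \bigcup_{i \in I^\ast} V_i$, let $\bm{\mathcal P}$ be the restriction of $\bm{\mathcal P}_0$ to $\hat V$, and choose a transversal $T = \{t_i \in V_i : i \in I^\ast\}$. Because conditions \ref{p1}--\ref{p4} are local to the cluster indices, they transfer to $\bm{\mathcal P}$ with the new vector $\mathbf{a} = (m, a_2, \dots, a_{k-1})$, establishing \ref{p11}. The cleaned subhypergraph $\hat H \subseteq H$ is obtained by removing every edge in $\mathcal K_k(\hat P^{(k-1)})$ for each polyad $\hat P^{(k-1)} \in \hat{\mathcal P}^{(k-1)}$ on which $H$ is not $(\delta_k, r)$-regular or has density below $d_k$. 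Since $\delta_k^0 \le \delta_k$ and $r_0 = r$ by construction, the $(\delta_k^0, r_0)$-regularity of $H$ downgrades to $(\delta_k, r)$-regularity of $\hat H$ on surviving polyads, so \ref{p22} is immediate: each polyad has density either $0$ (if fully cleaned, vacuously regular) or at least $d_k$ (if retained, with $\hat H = H$ regular).

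For \ref{p33}, I would observe that the total number of deleted edges is at most $(\delta_k^0 + d_k)|\mathrm{Cross}_k(\bm{\mathcal P}_0^{(1)})|$, since irregular polyads cover at most a $\delta_k^0$-fraction of $\mathrm{Cross}_k$ and each low-density polyad loses at most $d_k \cdot |\mathcal K_k(\hat P^{(k-1)})|$ edges. To locate a suitable $(I^\ast, T)$ I would apply a two-stage probabilistic argument. First, pick $I^\ast$ so that for every $\mathcal Y \in [I^\ast]^k$ the fraction $f_{\mathcal Y}$ of $\mathrm{Cross}_{\mathcal Y}$ inside irregular polyads is at most $d_k / \binom{m}{k}^3$: this is achievable because the average of $f_{\mathcal Y}$ over $\mathcal Y \in [a_1^0]^k$ is at most $\delta_k^0 \ll d_k/\binom{m}{k}^4$, and Markov's inequality bounds the bad-$\mathcal Y$ fraction by $\binom{m}{k}^{-2}$, so a uniformly random $I^\ast$ has no bad $\mathcal Y$ with positive probability. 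Second, sample $T$ uniformly among transversals in $I^\ast$; the clique-count estimate in \ref{p4} implies $|\mathcal K_k(\mathcal G_T) \cap \mathrm{Cross}_{\mathcal Y}|$ is essentially independent of $T$, so the expected fraction of this bucket inside irregular polyads matches $f_{\mathcal Y}$ up to an $(\eta + \delta(\mathbf{a}))$-error. Markov then bounds the per-$\mathcal Y$ failure probability by $\binom{m}{k}^{-3}$, and a union bound over the $\binom{m}{k}$ choices of $\mathcal Y$ yields a deterministic $T$ whose irregular contribution in every bucket is at most $d_k$. Combined with the built-in $d_k$-contribution from low-density polyads, this gives \ref{p33}.

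The main obstacle is precisely this simultaneous control across the $\binom{m}{k}$ buckets $\mathrm{Cross}_{\mathcal Y}$: a random transversal avoids any individual bad bucket only up to constant probability under a naive Markov bound, so both the polynomial-in-$\binom{m}{k}^{-1}$ smallness of $\delta_k^0$ and the sharp clique-count concentration provided by \ref{p3}--\ref{p4} are essential for the final union bound to close. The detailed bookkeeping is implicit in the proof of Corollary~4.3 of~\cite{RRS-Mantel}, and the careful parameter chasing is the technical heart of the argument.
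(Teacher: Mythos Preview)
Your approach is correct but proceeds in the reverse order from the paper and uses a different mechanism for the simultaneous control step. The paper first applies Theorem~\ref{thm-Reg-lem} with the given $\delta_k$ (no auxiliary $\delta_k^0$), then selects the transversal $T$ \emph{before} the $m$-subset: by property~\ref{p4} all $(k-2)$-polyads $\hat P^{(k-2)}_K$ have the same volume up to a $(1\pm\eta)$-factor, so a single averaging over transversals of the full $a_1$-cluster partition produces a $T$ for which at most $2\delta_k|\mathcal K_k(\mathcal G_T)|$ members of $\mathcal K_k(\mathcal G_T)$ lie in irregular polyads. With this $T$ fixed, the paper builds an auxiliary $k$-graph $R$ on vertex set $[a_1]$ whose edges are those $\mathcal Y$ for which the irregular fraction inside $\mathcal K_k(\mathcal G_T)\cap\mathrm{Cross}_{\mathcal Y}$ exceeds $2\sqrt{\delta_k}$; a direct count gives $|E(R)|\le 2\sqrt{\delta_k}\binom{a_1}{k}$, so $R$ contains an independent set $M$ of size $m$ since $\delta_k\ll 1/m$. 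Restricting to $M$ and cleaning then yields~\ref{p22} and~\ref{p33} with no union bound at all.

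Your two-stage Markov argument (choose $I^\ast$ first, then $T$) works equally well and has the merit of cleaning globally, so~\ref{p22} genuinely holds for every polyad rather than only those supported by $\mathcal G_T$. The cost is the introduction of the auxiliary $\delta_k^0\ll d_k/\binom{m}{k}^4$ and somewhat heavier bookkeeping in the second Markov step, where you must combine the deterministic clique-count bounds from~\ref{p4} with the double-counting identity $\sum_{T_{\mathcal Y}}\mathbf 1[K\in\mathcal K_k(\mathcal G_T)]=|\mathcal K_k(\hat P^{(k-2)}_K)|$ to show that the expected irregular fraction in each bucket matches $f_{\mathcal Y}$ up to $(1\pm O(\eta))$. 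The paper's independent-set trick sidesteps this entirely: once $T$ is fixed, the per-bucket control is a purely combinatorial sparsity statement about $R$, and no probabilistic concentration over $T$ is needed.
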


\begin{proof} Suppose that we have constants
	\[
	{n_0}^{-1}\ll r^{-1}, \delta\ll \min\{\delta_k, a_1^{-1}, \dots, a_{k-1}^{-1}, t^{-1}\} \ll \delta_k, \eta\ll d_k, m^{-1}\le k^{-1}.
	\]
	We shall apply the regularity lemma (Theorem~\ref{thm-Reg-lem}) with $\eta$, $\delta_k$ sufficiently small 
	and functions $r:\mathbb{N}^{[k-1]} \rightarrow \mathbb{N}$ and $\delta:\mathbb{N}^{[k-1]} \rightarrow (0,1]$, thus receiving two large integers $t$ and $n_0$. Let $H=(V,E)$ be a $k$-graph of order $n\ge n_0$  and $t!$ dividing $n$.
	We apply Theorem~\ref{thm-Reg-lem} to $H$ to obtain a family of partitions $\bm{\mathcal P'} = \bm{\mathcal P'}(k-1,\mathbf{a'} )$  of $V$ with $\mathbf{a'}=(a_1,\dots, a_{k-1})\in \mathbb{N}^{[k-1]}_{>0}$  such that
	\[
	\bm{\mathcal P'} ~\text{is}~(\eta, \delta(\mathbf{a'}), t)\text{-equitable and}~ H ~\text{is}~(\delta_k, r(\mathbf{a'}))\text{-regular w.r.t.}~\bm{\mathcal P}'. 
	\]
	Given a transversal $T$ of $\mathcal P^{(1)}$, we have
	\[
	\mathcal G_T=\{P^{(k-2)}_J\in \mathcal P^{(k-2)}: J\in [T]^{k-2}\}.
	\]
	Since $\bm{\mathcal P}'$ is $(\eta, \delta, t)$-equitable, recalling  the property~\ref{p4} in  Definition~\ref{def-eq-partition},   for each transversal $T$ of $\mathcal P^{(1)}$ and every $k$-set $K\in [T]^k$, we have
	\[
	|\mathcal{K}_k(\hat{P}^{(k-2)}_K)|= (1\pm \eta) \prod \limits_{\ell=1}^{k-2} (\frac{1}{a_{\ell}})^{\binom{k}{\ell}} n^k.
	\]
	Therefore,  every polyad $\hat{P}^{(k-2)}_K$  has the same volume up to a multiplicative factor controlled by $\eta$.
	In addition, since $H$ is $(\delta_k, r)$-regular w.r.t. $\bm{\mathcal P}'$,  there are all but at most $\delta_k|\mathrm{Cross}_k|$ $k$-sets $K$ in  $\mathrm{Cross}_k$ having the property that $H$ is $(\delta_k,r)$-regular w.r.t. $\hat P_K^{(k-1)}$.
	An easy averaging argument shows that there are some appropriate transversal $T$ such that all but at most $2\delta_k |\mathcal K_k(\mathcal G_T)|$ members of $\mathcal K_k(\mathcal G_T)$ have the property that $H$ is $(\delta_k,r)$-regular w.r.t. their polyads. 
	From now on we fix one
	such choice of $T$ and the corresponding collection $\mathcal G_T$. 
	
	For each $\mathcal Y \in [a_1]^{k}$, recall that $
	\mathrm{Cross}_{\mathcal Y}=\{K\in\mathrm{Cross}_k: K\cap V_i\neq \emptyset \text{~for~} i\in \mathcal Y\}$.
	Now we consider an auxiliary $k$-graph $R=([a_1], E_R)$ on the vertex set $[a_1]$, where
$\mathcal Y\in E_R$ if $\mathcal Y$ satisfies the following property:
\[
|\{ K\in  \mathcal K_k(\mathcal G_T) \cap \mathrm{Cross}_{\mathcal Y}: H \textit{~is~not~} (\delta_k. r)\textit{-regular~w.r.t.~} \hat{P}^{(k-1)}_K\}|>2\sqrt{\delta_k} |\mathcal K_k(\mathcal G_T) \cap \mathrm{Cross}_{\mathcal Y}|.
	\]
	By the choice of $\mathcal G_T$, we can obtain that 
	\[
	|E_R|\le \frac{2\delta_k |\mathcal K_k(\mathcal G_T)|}{2\sqrt{\delta_k} |\mathcal K_k(\mathcal G_T) \cap \mathrm{Cross}_{\mathcal Y}|}\le 
	2\sqrt{\delta_k} \binom{a_1}{k}. 	
	\]
	Consequently, owing to the choice of  $\delta_k \ll  1/m$ and $m\ll \eta\le a_1$, the auxiliary $k$-graph $R$ has an independent set $M\subseteq [a_1]$ of size $m$.
	
	Finally, we construct the desired subhypergraph $\hat{H}=(\hat V, \hat E)$. Let $\hat V:= \cup_{\lambda\in M }V_{\lambda} $ and $\bm{\mathcal P} = \bm{\mathcal P}(k-1,\mathbf{a} )$ be  
	the family of partitions $\bm{\mathcal P}'$ restricted under set $M$. Clearly, 
	$\mathbf{a}=(m,a_2, \dots, a_{k-1})$.
	Let us remove the edges from $\mathcal K_k(\mathcal G_T) \cap E$ which lie in a polyad $\hat P^{(k-1)}$ such that $\hat H$ is not $(\delta_k, r)$-regular 
	w.r.t. $\hat P^{(k-1)}$.
	By the choice of $M$ and $\delta_k\ll d_k$, for each $\mathcal Y\in [M]^k$, the number of edges we removed from $\mathcal K_k(\mathcal G_T) \cap E$ is at most $2\sqrt{\delta_k} |\mathcal K_k(\mathcal G_T) \cap \mathrm{Cross}_{\mathcal Y}|< d_k|\mathcal K_k(\mathcal G_T) \cap \mathrm{Cross}_{\mathcal Y}|$. 
	Moreover, we also remove the edges from $\mathcal K_k(\mathcal G_T) \cap E$ which lie in a polyad $\hat P^{(k-1)}$ such that $d(H|\hat P^{(k-1)})<d_k$. 
	Let $\hat E$ be the resulting edge set after these deletions. Then for each $\mathcal Y\in [M]^k$ we have
	\[
	|\mathcal K_k(\mathcal G_T) \cap \mathrm{Cross}_{\mathcal Y}\cap \hat E|\ge |\mathcal K_k(\mathcal G_T) \cap \mathrm{Cross}_{\mathcal Y}\cap E|-2d_k|\mathcal K_k(\mathcal G_T) \cap \mathrm{Cross}_{\mathcal Y}|.
	\]
	Therefore, $\hat{H}$ has all the desired properties.
\end{proof}

Finally, we state a general  embedding lemma, which allows embedding $k$-graphs of fixed isomorphism type into appropriate and sufficiently regular and dense polyads of the partition provided by Corollary~\ref{clean}. It is a direct consequence of~\cite[Theorem 2]{Embeddings}.

\begin{theorem}[Embedding lemma] \label{E-lemma}
	Let $f,k,r,n_0$ be positive integers and
	let $\mathbf{d}=(d_2,\dots, d_{k-1})\in \mathbb{N}^{[k-2]}_{>0}$ such that $1/d_i\in \mathbb{N}$ for all $i < k$,
	\[
	n_0^{-1}\ll r^{-1}, \delta \ll \min\{\delta_k,d_2,\dots, d_{k-1}\}\le \delta_k \ll d_k, 1/f.
	\]
	Then the following holds for all integers $n\ge n_0$. Let $F$ be a $k$-graph with vertex set $[f]$.  Suppose that $\mathcal{H}=\{H^{(j)}\}^{k-1}_{j=1}$ is a $(\mathbf{d}, \delta, 1)$-regular $(f, k-1)$-complex with clusters $V_1, \dots, V_f$, all of size $n$. Suppose also that $H$ is an $f$-partite $k$-graph on the same vertex partition such that for each edge  $\{i_1,\dots, {i_k}\}\in E(F)$, $H$ is $(\delta_k, r)$-regular w.r.t. the restriction $H^{(k-1)}[V_{i_1}\cup \dots \cup V_{i_k}]$ and  $d(H|H^{(k-1)}[V_{i_1}\cup \dots \cup V_{i_k}])\ge d_k$.
	Then $H$ contains a copy of $F$.
\end{theorem}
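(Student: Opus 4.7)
The plan is to deduce Theorem~\ref{E-lemma} directly from the embedding theorem for regular $(f,k)$-complexes of~\cite[Theorem 2]{Embeddings} by packaging the hypotheses into a single such complex. Since the statement concerns embedding a fixed $k$-graph $F$ on $[f]$ into a partite regular environment, the natural move is to lift the given $(f,k-1)$-complex together with $H$ to an $(f,k)$-complex for which the cited embedding theorem applies verbatim.

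First, I would augment the given $(f,k-1)$-complex $\mathcal{H}=\{H^{(j)}\}_{j=1}^{k-1}$ with a top $k$-layer constructed from $H$. For each edge $e=\{i_1,\dots,i_k\}\in E(F)$, let $H^{(k)}_e$ denote the restriction of $H$ to $V_{i_1}\cup\dots\cup V_{i_k}$ intersected with $\mathcal K_k\bigl(H^{(k-1)}[V_{i_1}\cup\dots\cup V_{i_k}]\bigr)$, and set $H^{(k)}:=\bigcup_{e\in E(F)}H^{(k)}_e$. Then $\mathcal{H}^{+}:=\{H^{(j)}\}_{j=1}^{k}$ is an $f$-partite $k$-complex with clusters $V_1,\dots,V_f$ of size $n$.

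Next, I would verify that $\mathcal{H}^{+}$ meets the regularity requirements of~\cite[Theorem 2]{Embeddings}. The layers $j\le k-1$ are $(d_j,\delta,1)$-regular by assumption on $\mathcal{H}$. For the top layer, the hypothesis on $H$ yields, for every $e=\{i_1,\dots,i_k\}\in E(F)$, a density $d_k^{e}\ge d_k$ for which $H^{(k)}_e$ is $(d_k^{e},\delta_k,r)$-regular w.r.t.\ $H^{(k-1)}[V_{i_1}\cup\dots\cup V_{i_k}]$. Outside the edges of $F$ no regularity is needed, because the cited partite counting/embedding theorem only sees the polyads corresponding to edges of the target $k$-graph. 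Under the hierarchy
\[
n_0^{-1}\ll r^{-1},\delta\ll\min\{\delta_k,d_2,\dots,d_{k-1}\}\le\delta_k\ll d_k,1/f,
\]
all error parameters in the regularity of $\mathcal{H}^{+}$ are far smaller than the respective densities, exactly matching the quantitative setup under which~\cite[Theorem 2]{Embeddings} delivers a counting statement.

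Finally, applying~\cite[Theorem 2]{Embeddings} to $\mathcal{H}^{+}$ gives that the number of partite labeled copies of $F$ in $\mathcal{H}^{+}$ (vertex $i\in V(F)$ embedded in $V_i$) is at least $c(\mathbf{d},d_k,F)\cdot n^{f}$ for some positive constant $c$; in particular this number is positive, so at least one such copy exists, which is a copy of $F$ in $H$. The only genuine obstacle is bookkeeping: one must check that the chosen hierarchy is strong enough to feed into the constants demanded by~\cite[Theorem 2]{Embeddings} and that the divisibility assumption $1/d_i\in\mathbb{N}$ lines up with the density specification there. Both are routine and introduce no new combinatorial content beyond what is already contained in the cited reference.
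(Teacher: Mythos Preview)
Your proposal is correct and matches the paper's treatment exactly: the paper does not give an independent proof of Theorem~\ref{E-lemma} but simply states that it ``is a direct consequence of~\cite[Theorem 2]{Embeddings}'', and your argument spells out precisely this deduction by packaging $\mathcal H$ and $H$ into a single $(f,k)$-complex.
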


\section{Proof of Theorem~\ref{thm-turan-reduced}}\label{sec-Reduced-pf}
For the proof of Theorem~\ref{thm-turan-reduced}, we intend to apply Theorem~\ref{E-lemma} (embedding lemma).
To apply Theorem~\ref{E-lemma}, we need to keep track of which polyads are dense and regular.
Similar to the role of reduced graphs in Szemer\'edi's regularity method, 
we hope that reduced $k$-graphs $\mathcal A$ is well suited for analyzing the  structure of the partition provided by Corollary~\ref{clean} applied to a host $k$-graph $H$. 
In other words, we hope that $(d+\eps)$-dense reduced $k$-graphs  $\mathcal A$ can inherit some useful properties of $(d+\eps',\mu, k-2)$-dense $k$-graphs $H$ for $0<\eps<\eps'\ll 1$.

For the above purposes it will be more convenient to work with an alternative definition of $\pi_{j}(F)$ that
we denote by $\pi_{[k]^{j}}(F)$ from Reiher, R\"odl and Schacht~\cite{RRS-Mantel}. 
In contrast to Definition~\ref{def-j-dense}, it speaks about the edge distribution
of $H$ relative to families consisting of $\binom{k}{k-j}$
many $j$-graphs rather
than just relative to one such $j$-graph.

Given a finite set $V$ and integer $k\ge 3$, we identify the Cartesian power $V^{[k]}$ by regarding any $k$-tuple $\vec{v} = (v_1,\dots ,v_k)$ as being the function $i\mapsto v_i$. Furthermore, 
for a set $J \in  [k]^j$ with $j<k$, we write $V^J$ for the set of all functions from $J$ to $V$. 
In this way, the natural projection from $V^{[k]}$ to $V^S$ becomes the restriction $\vec{v}\mapsto \vec{v}\mid S$ and the preimage of any set $G_S\subseteq V^S$ is denoted by
\[
\mathcal{K}_k(G_S) = \{\vec{v}\in V^{[k]}: (\vec{v}\mid S)\in G_S \}.
\]
More generally, for a family $\mathscr{G}_j = \{G_J: J\in [k]^j\}$ with $G_J \subseteq V^J$ for all $J\in [k]^j$, let
\[
\mathcal{K}_k(\mathscr{G}_j)= \bigcap \limits_{J\in [k]^j} \mathcal{K}_k(G_J).
\]
Given a $k$-graph $H = (V,E)$, let
\[
e_H(\mathscr{G}_j)=|\big \{(v_1,\dots ,v_k)\in \mathcal{K}_k(\mathscr{G}_j):\{ v_1,\dots ,v_k\}\in E\big \}|.
\]

\begin{definition}[\cite{RRS-Mantel}] \label{k-j-dense}
	Given integers $n\ge k>j\ge 0$, let real numbers $d\in [0,1]$, $\mu>0$, and $H = (V, E)$ be a
	$k$-graph with $n$ vertices. We say that $H$ is {\it $(d, \mu, [k]^j)$-dense } if
	\begin{equation}\label{equ-k-j-dense}
		e_H(\mathscr{G}_j)\geq d|\mathcal{K}_k(\mathscr{G}_j)|-\mu n^k
	\end{equation}
	holds for every family $\mathscr{G}_j=\{G_J: J\in [k]^j\}$ associating with each $J\in [k]^j$ some $G_J\subseteq V^J$.
\end{definition}

Accordingly, we set
\begin{equation*}
	\begin{split}
		\label{k-j-turan-dense}
		\pi_{[k]^j}(F) = \sup \{ d\in [0,1] & : \text{for\ every\ } \mu>0 \ \text{and\ } n_0\in \mathbb{N},\ \text{there\ exists\ an\ } F \text{-free} \\
		&\quad (d,\mu,[k]^j)\text{-dense}~ k \text{-graph~} H\ \text{with~} |V(H)|\geq n_0 \}.
	\end{split}
\end{equation*}

Reiher, R\"odl and Schacht~\cite[Proposition 2.5]{RRS-Mantel} proved the following result.

\begin{proposition}\label{pro-turan-trans}
	For positive intrgers $k>j>0$, every $k$-graph $F$ satisfies	
	\begin{equation*}
		\pi_j(F)=\pi_{[k]^j}(F).
	\end{equation*}
\end{proposition}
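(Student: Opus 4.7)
The plan is to prove the two inequalities separately, by showing that the conditions of $(d,\mu,j)$-denseness and $(d,\mu,[k]^j)$-denseness imply each other up to a bounded rescaling of $\mu$.

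For the direction $\pi_{[k]^j}(F)\le\pi_j(F)$, I would show that every $(d,\mu,[k]^j)$-dense $k$-graph $H$ on $V$ is automatically $(d,\mu,j)$-dense. Given a $j$-graph $G^{(j)}$ on $V$, set $G_J=\{\vec v\in V^J:\{\vec v(i):i\in J\}\in E(G^{(j)})\}$ for each $J\in [k]^j$, obtaining a family $\mathscr G_j=\{G_J\}_{J\in [k]^j}$. Ordered tuples in $\mathcal K_k(\mathscr G_j)$ with pairwise distinct entries correspond $k!$-to-one to members of $\mathcal K_k(G^{(j)})$, and for $j\ge 2$ these exhaust $\mathcal K_k(\mathscr G_j)$ (for $j=1$ there is only a negligible $O(n^{k-1})$ correction from degenerate tuples). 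The same $k!$-to-one correspondence relates $e_H(\mathscr G_j)$ to $|\mathcal K_k(G^{(j)})\cap E(H)|$ because edges of $H$ have distinct vertices. Dividing the $[k]^j$-inequality by $k!$ and absorbing the lower-order terms into the error parameter gives the $j$-inequality.

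For the harder direction $\pi_j(F)\le\pi_{[k]^j}(F)$, I would show that $(d,\mu,j)$-denseness implies $(d,O(k^k\mu),[k]^j)$-denseness via a random-partition reduction. Given a position-sensitive family $\mathscr G_j=\{G_J\}$ and a uniformly random equipartition $\pi\colon V=V_1\sqcup\dots\sqcup V_k$ (each part of size $n/k$), associate to $\pi$ the $j$-graph $G^{(j)}_\pi$ on $V$ whose edges are the $j$-subsets $\{u_1,\dots,u_j\}$ meeting distinct parts $V_{i_1},\dots,V_{i_j}$ with $i_1<\cdots<i_j$ such that $(u_1,\dots,u_j)\in G_{\{i_1,\dots,i_j\}}$ (the ordering being forced by the part indices). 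A direct verification shows that $\mathcal K_k(G^{(j)}_\pi)$ is exactly the set of $k$-subsets of $V$ transversal to $\pi$ whose induced ordered $k$-tuple lies in $\mathcal K_k(\mathscr G_j)$: any $j$-subset with two vertices in one part fails to be an edge of $G^{(j)}_\pi$ and thereby rules out all non-transversal $k$-subsets from the clique set.

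Finally I would apply the $(d,\mu,j)$-denseness to $G^{(j)}_\pi$ for every $\pi$ and average the resulting inequality $|\mathcal K_k(G^{(j)}_\pi)\cap E(H)|\ge d|\mathcal K_k(G^{(j)}_\pi)|-\mu n^k$. A standard computation shows that a fixed ordered $k$-tuple with pairwise distinct entries is transversal to a uniformly random equipartition with probability $(1+O(1/n))/k^k$, while tuples with repeated entries are transversal with probability zero. Hence
\[
\mathbb E\!\left[|\mathcal K_k(G^{(j)}_\pi)|\right]=\frac{|\mathcal K_k(\mathscr G_j)|}{k^k}(1+o(1))+O(n^{k-1}),\qquad \mathbb E\!\left[|\mathcal K_k(G^{(j)}_\pi)\cap E(H)|\right]=\frac{e_H(\mathscr G_j)}{k^k}(1+o(1)),
\]
and multiplying through by $k^k$ and absorbing the $o(n^k)$ corrections yields $e_H(\mathscr G_j)\ge d|\mathcal K_k(\mathscr G_j)|-O(k^k\mu)n^k$, i.e.\ the desired $(d,O(k^k\mu),[k]^j)$-inequality. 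The main obstacle is the bookkeeping in this averaging step: one must carefully verify that the map $\pi\mapsto G^{(j)}_\pi$ recovers the ordered-tuple structure of $\mathscr G_j$ exactly, and that the $O(n^{k-1})$ contribution from repeat-entry tuples together with the $O(1/n)$ deviation of the transversal probability from $1/k^k$ does not overwhelm the $\mu n^k$ error term for large $n$.
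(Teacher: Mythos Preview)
The paper does not give its own proof of this proposition; it is quoted from \cite[Proposition~2.5]{RRS-Mantel}, so there is no in-paper argument to compare against directly. Your sketch follows the natural random-colouring reduction. The first direction (symmetrising a single $j$-graph into the family $\mathscr G_j$ and dividing through by $k!$) is correct for all $j\ge 1$.

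In the second direction your argument is correct for $j\ge 2$ but has a real gap at $j=1$. The justification ``any $j$-subset with two vertices in one part fails to be an edge of $G^{(j)}_\pi$ and thereby rules out all non-transversal $k$-subsets'' needs $j\ge 2$: a $1$-set cannot contain two vertices. For $j=1$ the set $\mathcal K_k(G^{(1)}_\pi)$ therefore \emph{does} contain non-transversal $k$-subsets, and your expectation identity
\[
\mathbb E\bigl[|\mathcal K_k(G^{(1)}_\pi)\cap E(H)|\bigr]=\frac{e_H(\mathscr G_1)}{k^k}\,(1+o(1))
\]
is false in general. For a concrete failure take $k=3$, $G_{\{1\}}=V$ and $G_{\{2\}}=G_{\{3\}}=U\subsetneq V$: a vertex $u$ lies in $G^{(1)}_\sigma$ whenever $u\in U$ or $\sigma(u)=1$, so edges of $H$ with zero or one vertex in $U$ contribute to the left-hand expectation but not at all to $e_H(\mathscr G_1)$, and there is no single scaling factor relating the two sides. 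Since the present paper invokes the proposition with $j=k-2$ and $k\ge 3$, the case $j=1$ (namely $k=3$) is exactly the one being used, so the gap is not cosmetic. Closing it is more than bookkeeping: one needs a separate argument for $j=1$, for instance a preliminary random partition that places the sets $G_{\{i\}}$ into disjoint blocks (so that membership in $G_{\{i\}}$ already forces the colour $i$ and transversality comes for free) before running your averaging, at the cost of one further factor $k^{k}$ in the error term.
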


Consequently it is allowed to imagine that in Theorem~\ref{1/kthm} we would have
written $\pi_{[k]^{k-2}}(F)$ instead of $\pi_{k-2}(F)$. 
Now we can transform the embedding problems in $(d, \mu, {k-2})$-dense $k$-graphs into the embedding problems in $(d, \mu, [k]^{k-2})$-dense $k$-graphs, and give the proof of Theorem~\ref{thm-turan-reduced} using Corollary~\ref{clean} and Theorem~\ref{E-lemma}.

\begin{proof}[Proof Theorem~\ref{thm-turan-reduced}] Given $k\ge 3$, $d\in [0,1]$ and $\eps>0$, we choose $m^{-1}\ll \eps$.
Suppose that  $F$ is a $k$-graph  satisfying the statement of
Theorem~\ref{thm-turan-reduced} and $|V(F)|=m$. We fix auxiliary constants and functions to satisfy the hierarchy
	\[
	0<\mu\ll {n_0}^{-1}\ll r(\cdot)^{-1}, \delta(\cdot) \ll \min\{\delta_k, a_2^{-1}, \dots, a_{k-1}^{-1}, t^{-1}\} \ll \delta_k, \eta\ll d_k, m^{-1},
	\]	
	where $\delta_k$ and the
	functions $r(\cdot)$ and $\delta(\cdot)$ are given by Theorem~\ref{E-lemma} applied for $F$ and  $d_k$, and $\eta, t$ are given by  Corollary~\ref{clean}.
	By Proposition~\ref{pro-turan-trans},
	it suffices to show that $\pi_{[k]^{k-2}}(F)\le d$.

	Let  $H$ be a $(d+2\eps, \mu, [k]^{k-2})$-dense $k$-graph on $n\ge n_0$ vertices. 
	By Corollary~\ref{clean} applied to $H$, we obtain a subhypergraph $\hat H=(\hat V, \hat E)$ of $H$ and a family of partitions $\bm{\mathcal P} = \bm{\mathcal P}(k-1,\mathbf{a} )$  of $\hat{V}$ with $\mathbf{a}=(m, a_2, \dots, a_{k-1})\in \mathbb{N}^{k-1}_{>0}$  satisfying properties~\ref{p11}-\ref{p33} of Corollary~\ref{clean}. Set $\mathcal P^{(1)}=\{V_1, V_2, \dots, V_m\}$. Recalling the property~\ref{p33} of Corollary~\ref{clean}, there is a transversal $T$ of $\mathcal P^{(1)}$ such that 	for each $\mathcal Y\in [m]^k$
	\begin{equation}\label{equ-thm-31-0}
		|\mathcal K_k(\mathcal G_T) \cap \mathrm{Cross}_{\mathcal Y}\cap \hat E|\ge |\mathcal K_k(\mathcal G_T) \cap \mathrm{Cross}_{\mathcal Y}\cap E|-2d_k|\mathcal K_k(\mathcal G_T) \cap \mathrm{Cross}_{\mathcal Y}|.
	\end{equation}
	
Now we construct an $m$-reduced $k$-graph $\mathcal A$  with index set $[m]$ as
follows:  For
	each $\mathcal X\in [m]^{k-1}$, the vertex class $\mathcal P_{\mathcal X}$ is defined to be the set of all $k-1$-cells $P^{k-1}\in \mathcal P^{k-1}$ with $P^{k-1}\in \mathcal K_{k-1}(\hat P^{(k-2)}_{T_{\mathcal X}})$ where $T_{\mathcal X}:=\{T\cap V_i: i\in \mathcal X \}$ and $\hat P^{(k-2)}_{T_{\mathcal X}}=\bigcup \{P^{(k-2)}_I: I\in [T_{\mathcal X}]^{k-2}\}$.
	As a consequence all the vertex classes $\mathcal P_{\mathcal X}$
	have the same size $a_{k-1}$ since $\bm{\mathcal P}$ is a family of partitions, see equation~(\ref{cells}). 
	It remains to define the constituents of $\mathcal A$. For simplicity, let $P^{(k-1)}(w)$ denote the $(k-1)$-cell corresponding to  $w\in \mathcal P_{\mathcal X}$.
	Given a $k$-set $\mathcal Y\in [M]^k$, we let $E(\mathcal A_{\mathcal Y})$ be the collection of all
	$k$-sets $\{w_1, w_2, \dots, w_k\}$ of $\bigcup_{\mathcal  X\in [\mathcal  Y]^{k-1}}\mathcal P_{\mathcal  X}$ such that $\bigcup \{P^{(k-1)}(w_i): i\in [k]\}$ forms a
	$k$-partite $(k-1)$-graph $\hat P^{(k-1)}$ (polyad) w.r.t. which $\hat H$ is $(\delta_k,r)$-regular and $d(\hat H| \hat P^{(k-1)})\ge d_k$.

We first claim that the $m$-reduced $k$-graph $\mathcal A$ is $(d+\eps)$-dense.
	Given a $k$-set $\mathcal Y\in [m]^k$, since $H$ is $(d+2\eps, \mu, [k]^{k-2})$-dense, we have that
	\begin{equation}\label{equ-thm-31-1}
		|\mathcal K_k(\mathcal G_T) \cap \mathrm{Cross}_{\mathcal Y}\cap E|\ge (d+2\eps)|\mathcal K_k(\mathcal G_T) \cap \mathrm{Cross}_{\mathcal Y}|-\mu n^k.
	\end{equation}
	Note that  $\mathcal K_k(\mathcal G_T) \cap \mathrm{Cross}_{\mathcal Y}=\mathcal K_k(\hat P^{(k-2)}_{T_{\mathcal Y}})$
	where $T_{\mathcal Y}=\{T\cap V_i: i\in \mathcal Y \}$ and $\hat P^{(k-2)}_{T_{\mathcal Y}}=\bigcup \{P^{(k-2)}_I: I\in [T_{\mathcal Y}]^{k-2}\}$.
	Since $\bm{\mathcal P'}$ is $(\eta, \delta(\mathbf{a}), t)$-equitable, by the condition~\ref{p4} of Definition~\ref{def-eq-partition}, we have 
	\begin{equation}\label{equ-thm-31-2}
		|\mathcal K_k(\mathcal G_T) \cap \mathrm{Cross}_{\mathcal Y}|= (1\pm \eta) \prod \limits_{\ell=1}^{k-2} (\frac{1}{a_{\ell}})^{\binom{k}{\ell}} n^k,
	\end{equation}
	and every polyad $\hat P^{(k-1)}$ satisfies
	\begin{equation}\label{equ-thm-31-3}
		|\mathcal K_k(\hat P^{(k-1)}) |= (1\pm \eta) \prod \limits_{\ell=1}^{k-1} (\frac{1}{a_{\ell}})^{\binom{k}{\ell}} n^k,
	\end{equation}
	Combining the lower bound in (\ref{equ-thm-31-2}) with our choice $\mu\ll t^{-1}$, $\eps$ leads to 
	\begin{equation*}
		|\mathcal K_k(\mathcal G_T) \cap \mathrm{Cross}_{\mathcal Y}|\ge (1- \eta) \cdot (\frac{1}{t})^{\sum_{\ell=1}^{k-2}\binom{k}{\ell}} n^k\ge \frac{1}{t^{2^k}}n^k\ge \frac{2\mu}{\eps}n^k,
	\end{equation*}
	and hence (\ref{equ-thm-31-1}) can rewrite as
	\begin{equation}\label{equ-thm-31-4}
		|\mathcal K_k(\mathcal G_T) \cap \mathrm{Cross}_{\mathcal Y}\cap E|\ge (d+\frac{3}{2}\eps)|\mathcal K_k(\mathcal G_T) \cap \mathrm{Cross}_{\mathcal Y}|.
	\end{equation}
	Owing to (\ref{equ-thm-31-0}) and (\ref{equ-thm-31-4}), and $d_k\ll \eps$, we obtain
	\begin{equation}\label{equ-thm-31-5}
		|\mathcal K_k(\mathcal G_T) \cap \mathrm{Cross}_{\mathcal Y}\cap \hat E|\ge (d+\frac{5}{4}\eps)|\mathcal K_k(\mathcal G_T) \cap \mathrm{Cross}_{\mathcal Y}|.
	\end{equation}
	In particular, these edges from $\mathcal K_k(\mathcal G_T) \cap \mathrm{Cross}_{\mathcal Y}\cap \hat E$ all lie in polyads $\hat P^{(k-1)}$ that are encoded as edges of $\mathcal A_{\mathcal Y}$. However, by (\ref{equ-thm-31-3}), every polyad $\hat P^{(k-1)}$ can
	support at most
	\[
	|\mathcal K_k(\hat P^{(k-1)}) |\le  (1+ \eta)\frac{1}{a^k_{k-1}} \prod \limits_{\ell=1}^{k-2} (\frac{1}{a_{\ell}})^{\binom{k}{\ell}} n^k\le \frac{1+ \eta}{1- \eta} \cdot \frac{1}{a^k_{k-1}} \cdot |\mathcal K_k(\mathcal G_T) \cap \mathrm{Cross}_{\mathcal Y}|
	\]
	edge of $\hat H$.
	For these reasons~\eqref{equ-thm-31-5} leads to
	\[
	(d+\frac{5}{4}\eps)|\mathcal K_k(\mathcal G_T) \cap \mathrm{Cross}_{\mathcal Y}|\le |E(\mathcal A_{\mathcal Y})|\cdot \frac{1+ \eta}{1- \eta} \cdot \frac{1}{a^k_{k-1}} \cdot |\mathcal K_k(\mathcal G_T) \cap \mathrm{Cross}_{\mathcal Y}|
	\]
	which yields
	\[
	|E(\mathcal A_{\mathcal Y})|\ge (d+\eps) a^k_{k-1}=(d+\eps)  \cdot \prod_{{\mathcal X}\in [\mathcal Y]^{k-1}}|\mathcal {P}_{\mathcal X}|.
	\]
	Therefore, $\mathcal A$ is a $(d+\eps)$-dense $m$-reduced $k$-graph.

Next, let $\mathcal{H}=\{H^{(j)}\}^{k-1}_{j=1}$ denote the $(m, k-1)$-complex formed by all $(k, k-1)$-complexes
$\bigcup^{k-1}_{i=1}\hat{P}^{(i)}_K$ with $K\in \mathcal K_k(\mathcal G_T) \cap \mathrm{Cross}_{\mathcal Y}$.
Since $\bm{\mathcal P'}$ is $(\eta, \delta(\mathbf{a}), t)$-equitable, by the condition~\ref{p3} of Definition~\ref{def-eq-partition}, 
  $\mathcal{H}$ 
is $(\mathbf{d}, \delta, 1)$-regular with  $\mathbf{d}= (1/a_2, \dots , 1/a_{k-1})$.
Moreover, $\mathcal A$  embeds $F$ which means that there is a reduced map $(\phi, \psi)$ from $F$ to $\mathcal A$, which means that
if the $i_1$\textsuperscript{th}, $i_2$\textsuperscript{th}, $\dots$, $i_k$\textsuperscript{th}
vertices of $F$ form a $k$-edge, then
 \[
\{\psi(i_2,\dots, i_k), \psi(i_1,i_3,\dots, i_k),\dots, \psi(i_1,\dots, i_{k-1})\}\in E( \mathcal A_{\mathcal Y}).
\]
By the construction of $\mathcal A$, $\hat H$ is $(\delta_k, r)$-regular w.r.t. the restriction $H^{(k-1)}[V_{\phi({i_1})}\cup \dots \cup V_{\phi({i_k})}]$ and  $d(H|H^{(k-1)}[V_{\phi({i_1})}\cup \dots \cup V_{\phi({i_k})}])\ge d_k$.
Therefore, applying Theorem~\ref{E-lemma} to $\hat H$ and $F$, we have $F\subset \hat H\subset H$.
\end{proof}

\section{Embedding lemma of reduced $k$-graphs}\label{sec-Reduced-embed}

In this section, we will prove some auxiliary
results for reduced $k$-graphs and use them to prove an embedding lemma (Lemma~\ref{lem-Embedding-reduced}) of reduced $k$-graphs with density more than $k^{-k}$, which is the main result of
this section.


\begin{lemma}\label{lem-Embedding-reduced}
	Given  $\eps>0$ and integers $ m> k \ge 3$, there exists  $N\in \mathbb N$ such that the following holds.
	For every $(k^{-k}+\eps)$-dense $N$-reduced $k$-graph $\mathcal A$, there exists an induced subhypergraph $\mathcal A'\subseteq \mathcal A$  on index set $M\subseteq [N]$ with $|M|=m$ and there exist $(2k-1)$  vertices (not necessarily distinct) $\alpha^1_{\mathcal X}, \dots, \alpha^k_{\mathcal X}, \beta^1_{\mathcal X}, \dots, \beta^{k-1}_{\mathcal X} \in \mathcal P_{\mathcal X}$ for  all  ${\mathcal X}\in [M]^{k-1}$ such that the followings hold.
\stepcounter{propcounter}
\begin{enumerate}[label = {\rm ({\bfseries \Alph{propcounter}\arabic{enumi}})}]
\item\label{p111} For all $\mathcal Y\in [M]^k$ and $\mathcal X_{\ell}\in  [\mathcal Y]^{k-1}$ with $\ell \in [k]$, we have  $\{\alpha^1_{\mathcal X_{1}}, \alpha^2_{\mathcal X_{2}}, \dots, \alpha^k_{\mathcal X_{k}} \} \in E(\mathcal A_{\mathcal Y})$.
\item\label{p222} There exists a pair $\{i',j'\}\in [k]^2$ with $i'<j'$ such that for all $\mathcal Y\in [M]^k$ and $\mathcal X_{\ell}\in  [\mathcal Y]^{k-1}$ with $\ell \in [k]$, we have 
$\{\beta^1_{\mathcal X_{1}},\dots, \beta^{j'-1}_{\mathcal X_{j'-1}},  \alpha^{i'}_{\mathcal X_{j'}}, \beta^{j'}_{\mathcal X_{j'+1}}, \dots, \beta^{k-1}_{\mathcal X_{k}} \} \in E(\mathcal A_{\mathcal Y})$.
\end{enumerate}
	
\end{lemma}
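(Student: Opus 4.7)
The plan is to combine the Ramsey theorem for multicolored $k$-uniform hypergraphs with a palette construction and a short AM--GM counting argument. Fix a large integer $s = s(k, m, \eps)$ and, for each $\mathcal{X} \in [N]^{k-1}$, choose a palette $V_\mathcal{X} = (v^1_\mathcal{X}, \dots, v^s_\mathcal{X}) \in \mathcal{P}_\mathcal{X}^s$ (vertices with repetition allowed). For each $\mathcal{Y} \in [N]^k$ with $(k-1)$-subsets $\mathcal{X}_1, \dots, \mathcal{X}_k$ listed in position order, set
\[
P(\mathcal{Y}) := \{(i_1,\dots,i_k) \in [s]^k : \{v^{i_1}_{\mathcal{X}_1}, \dots, v^{i_k}_{\mathcal{X}_k}\} \in E(\mathcal{A}_\mathcal{Y})\}.
\]
This is a coloring of $[N]^k$ with at most $2^{s^k}$ colors. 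Picking $N$ larger than the corresponding Ramsey number yields a common pattern $P^* \subseteq [s]^k$ with $P(\mathcal{Y}) = P^*$ for all $\mathcal{Y} \in [M]^k$, where $|M| = m$.

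The crucial quantitative step is ensuring $|P^*| > k^{-k} s^k$. Uniformly random palettes satisfy $\mathbb{E}[|P(\mathcal{Y})|] = p_\mathcal{Y} \cdot s^k \ge (k^{-k}+\eps) s^k$ for each $\mathcal{Y}$ (where $p_\mathcal{Y}$ is the density of the constituent $\mathcal{A}_\mathcal{Y}$), so one expects a suitable choice of palette to yield $|P^*| \ge (k^{-k}+\eps/2) s^k$ after Ramsey. The subtlety is avoiding a circular dependence between the palette size $s$ and the Ramsey number $N$: a direct concentration-plus-union-bound argument would require $s \gg \log N$, whereas $N$ is already tower-exponential in $s^k$. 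I expect the resolution, which constitutes the content of the auxiliary lemmas referenced at the start of Section~\ref{sec-Reduced-embed}, is a two-stage Ramsey refinement --- first on the rounded densities $\lceil p_\mathcal{Y}/\delta \rceil$ to pass to an intermediate sub-index of nearly-uniform density, then on the pattern --- so that the concentration step is performed on an index set whose size is decoupled from $s$. This density-transfer step is the chief technical obstacle.

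Once $|P^*| > k^{-k} s^k$ is secured, the selection of vertices is a purely combinatorial exercise. For each $\ell \in [k]$ define $C_\ell := \{a \in [s] : a \text{ appears in column } \ell \text{ of some tuple of } P^*\}$ and $D_\ell := C_\ell \setminus \bigcup_{\ell' > \ell} C_{\ell'}$. The $D_\ell$'s are pairwise disjoint subsets of $[s]$, and the AM--GM inequality gives
\[
\prod_{\ell=1}^{k} |D_\ell| \;\le\; \Bigl(\tfrac{1}{k}\sum_{\ell=1}^{k} |D_\ell|\Bigr)^{\!k} \;\le\; (s/k)^k \;=\; k^{-k} s^k \;<\; |P^*|,
\]
so $P^* \not\subseteq D_1 \times \cdots \times D_k$. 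Hence there exist $(i_1, \dots, i_k) \in P^*$ and a pair $i' < j'$ with $i_{i'} \in C_{j'}$; fix a witness tuple $(b_1,\dots,b_k) \in P^*$ with $b_{j'} = i_{i'}$. Set $\alpha^\ell_\mathcal{X} := v^{i_\ell}_\mathcal{X}$ for every $\mathcal{X} \in [M]^{k-1}$ and $\ell \in [k]$: then (P111) holds because the canonical tuple $(i_1,\dots,i_k) \in P^* = P(\mathcal{Y})$ for every $\mathcal{Y} \in [M]^k$. Set $\beta^\ell_\mathcal{X} := v^{b_\ell}_\mathcal{X}$ for $\ell < j'$ and $\beta^\ell_\mathcal{X} := v^{b_{\ell+1}}_\mathcal{X}$ for $j' \le \ell \le k-1$: then the modified tuple in (P222) realizes exactly the palette pattern $(b_1,\dots,b_k) \in P^*$, with the entry $b_{j'} = i_{i'}$ at position $j'$ matching $\alpha^{i'}_{\mathcal{X}_{j'}}$, so (P222) holds for the pair $(i', j')$.
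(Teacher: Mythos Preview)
You correctly flag the density-transfer step as the chief obstacle, but your proposed two-stage Ramsey does not break the circularity. After a first Ramsey on rounded densities you pass to some intermediate $M_0$; the second Ramsey, on the full pattern $P(\mathcal Y)\subseteq [s]^k$, still uses $2^{s^k}$ colors and so forces $|M_0| \ge R_k(m; 2^{s^k})$, which for $k\ge 3$ is multi-exponential in $s^k$. The concentration step (Azuma on the $ks$ palette entries affecting a given $\mathcal Y$, each with Lipschitz constant $s^{k-1}$) then demands $s \gtrsim \log |M_0|$, i.e.\ $s$ larger than a tower in $s^k$. Pre-rounding the densities is irrelevant here: the blow-up is driven by the number of pattern-colors, not by the spread of densities. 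Your AM--GM argument on the column supports $D_\ell$ of $P^*$ is correct and clean once $|P^*|>(s/k)^k$ is in hand, but the scheme you sketch cannot supply that hypothesis.

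The paper's route is genuinely different. It never fixes a palette; instead it selects the $2k-1$ vertices one type at a time over $2k$ stages. At each stage Lemma~\ref{lem-5} guarantees that the vertices compatible with all previously fixed ones form a linear-proportion subset of every $\mathcal P_{\mathcal X}$, and then a \emph{constant-color} Ramsey step (Lemma~\ref{lem-3} or Lemma~\ref{lem-4}) shrinks the index set so that a single representative per class works for every constituent simultaneously. Because each Ramsey application uses only $O_{k,\eps}(1)$ colors, the hierarchy $m\ll m_1\ll\cdots\ll m_{2k-1}\ll N$ closes with no circular dependence. The AM--GM inequality does appear, but earlier and on different quantities (inside Lemma~\ref{lem-2}): it forces the proportions $t_\ell$ of high-degree $\ell$-type vertices to satisfy $\sum_\ell t_\ell > 1$, after which inclusion--exclusion locates the pair $(i',j')$ with $|\mathcal S^\rho_{\mathcal X\to z_{i'}}\cap \mathcal S^\rho_{\mathcal X\to z_{j'+1}}|\ge \rho|\mathcal P_{\mathcal X}|$; the iterative selection then starts from the shared vertex $\alpha^{i'}_{\mathcal X}$ witnessing this overlap.
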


We postpone the proof of Lemma~\ref{lem-Embedding-reduced} to the end of this section.
Combining Theorem~\ref{thm-turan-reduced} and Lemma~\ref{lem-Embedding-reduced}, we first give the proof of  Theorem~\ref{1/kthm}. 

\begin{proof}[Proof of Theorem~\ref{1/kthm}]
	Given $m> k\ge 3$, let $F$ be an $m$-vertex  $k$-graph  obeying conditions ($\clubsuit$) and ($\spadesuit$) in Theorem~\ref{1/kthm}.
	Recalling the condition ($\clubsuit$), $F$  has no vanishing ordering of $V(F)$. By
	 Theorem~\ref{thm-k-2-zero-density} and Corollary~\ref{lower-bound}, we trivially have
	$\pi_{k-2}(F)\ge k^{-k}$.

	Next, we shall apply Theorem~\ref{thm-turan-reduced} to prove that  $\pi_{k-2}(F)\le k^{-k}$. 
	It suffices to show that for every $\eps>0$, there exists $N\in \mathbb N$ such that every $(k^{-k}+\eps)$-dense $N$-reduced $k$-graph embeds $F$. 
We first apply  Lemma~\ref{lem-Embedding-reduced} with $\eps$ and $m$ to get $N$. Let $\mathcal A$  be a $(k^{-k}+\eps)$-dense $N$-reduced $k$-graph with index set $[N]$, and $\mathcal A'\subseteq \mathcal A$ be an  induced subhypergraph satisfying the 
properties~\ref{p111} and~\ref{p222} in Lemma~\ref{lem-Embedding-reduced} on index set $M\subseteq[N]$ with $|M|=m$.
	Now we fix $\{i',j'\}\in [k]^2$ with $i'<j'$ by the property~\ref{p222}.
	Recalling the condition ($\spadesuit$),  $F$ can be partitioned into two spanning subhypergraphs $F^1_{i',j'}$ and $F^2_{i',j'}$ such that there exists an ordering $\sigma=(v_1, v_2,\dots, v_m) $ of $V(F)$ that is vanishing both for $F^1_{i',j'}$ and $F^2_{i',j'}$
	and for any two edges $e_1\in E(F^1_{i',j'})$, $e_2\in E(F^2_{i',j'})$ with $|e_1\cap e_2|=k-1$,  $e_1\cap e_2$ is $i'$-type w.r.t. $F^1_{i',j'}$ and $j'$-type w.r.t. $F^2_{i',j'}$.
Therefore, for each $(k-1)$-set $S\in \partial F$, $S$ only satisfies one of the following three cases: 
\begin{enumerate}
\item $S$ is $r$-type w.r.t. $F^1_{i',j'}$ for some $r\in [k]$; 
\item $S$ is $t$-type w.r.t. $F^2_{i',j'}$ for some $t\in [k]$; 
\item $S$ is  $i'$-type w.r.t. $F^1_{i',j'}$ and $j'$-type w.r.t. $F^2_{i',j'}$. 
\end{enumerate}

For convenience, we rearrange the indices in $M$ and write $M=[m]$. Let $\phi: V(F)\to [m]$ satisfying $\phi(v_\ell)=\ell$ for all $\ell\in [m]$.
 Given $S\in \partial F$, let $\phi (S)$ denote the $(k-1)$-set consisting of the subscripts of vertices in $S$. Now we consider $\psi:\partial F \to V(\mathcal A')$ as follows.
For each $S\in \partial F$, let $\psi(S)=\alpha^{r}_{\phi(S)}$ if $S$ is $r$-type w.r.t. $F^1_{i',j'}$ for some $r\in [k]\setminus \{i'\}$; let $\psi(S)=\alpha^{i'}_{\phi(S)}$ if $S$ is $i'$-type w.r.t. $F^1_{i',j'}$ or $j'$-type w.r.t. $F^2_{i',j'}$; let $\psi(S)=\beta^{t}_{\phi(S)}$ if 
 $S$ is $t$-type w.r.t. $F^2_{i',j'}$ for some $t\in [j'-1]$;
let $\psi(S)=\beta^{t-1}_{\phi(S)}$ if 
$S$ is $t$-type w.r.t. $F^2_{i',j'}$ for some $t\in [k]\setminus [j']$.
By the 
properties~\ref{p111} and~\ref{p222} in Lemma~\ref{lem-Embedding-reduced}, we obtain that $(\phi, \psi)$ is a reduced map from $F$ to $\mathcal A'$. Thus, $\mathcal A$ embeds $F$.
\end{proof}

To prove Lemma~\ref{lem-Embedding-reduced}, our main tool is the classical Ramsey theorem for multicolored
hypergraphs, which we state below for reference.

\begin{theorem}[Ramsey~\cite{Ramsey}]  \label{thm-Ramsey} 
	For any $r_R, k_R, n_R \in \mathbb N$, there exists $N \in \mathbb N$ such that every $r_R$-edge-coloring of a $k_R$-uniform clique with $N$ vertices contains a monochromatic $k_R$-uniform clique with $n_R$ vertices.
\end{theorem}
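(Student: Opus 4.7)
The plan combines the Ramsey theorem for multicolored $k$-uniform hypergraphs (Theorem~\ref{thm-Ramsey}) with the density hypothesis $k^{-k}+\eps$ via a probabilistic sampling argument. My first observation is a simplification: since the conclusion of the lemma permits the $2k-1$ chosen vertices per $\mathcal{X}$ to coincide, it suffices to find, for each $\mathcal{X}\in [M]^{k-1}$, a single vertex $v_{\mathcal{X}}\in \mathcal{P}_{\mathcal{X}}$ such that the ``canonical'' edge $\{v_{\mathcal{X}_1},\ldots,v_{\mathcal{X}_k}\}$ lies in $E(\mathcal{A}_{\mathcal{Y}})$ for every $\mathcal{Y}\in [M]^k$. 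Indeed, setting $\alpha^{\ell}_{\mathcal{X}}=\beta^{\ell'}_{\mathcal{X}}=v_{\mathcal{X}}$ for all $\ell\in [k]$ and $\ell'\in [k-1]$ collapses both properties \ref{p111} and \ref{p222} (for any fixed pair $i'<j'$, for instance $(i',j')=(1,2)$) to this single canonical-edge condition.

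With this reduction I would proceed as follows. First normalize all $|\mathcal{P}_{\mathcal{X}}|$ to a common size $n$ by trimming. For each $\mathcal{X}$, independently draw a uniformly random ordered tuple $(u^1_{\mathcal{X}},\ldots,u^s_{\mathcal{X}})\in \mathcal{P}_{\mathcal{X}}^s$, where $s$ is a parameter to be chosen. Color each $\mathcal{Y}\in [N]^k$ by the finite set
\[
\chi(\mathcal{Y}) \,:=\, \bigl\{\,t\in [s]\,:\,\{u^t_{\mathcal{X}_1},\ldots,u^t_{\mathcal{X}_k}\}\in E(\mathcal{A}_{\mathcal{Y}})\,\bigr\} \,\subseteq\, [s],
\]
a coloring with at most $2^s$ values. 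With $N\geq R_k(m;\,2^s)$ large enough for Theorem~\ref{thm-Ramsey} to apply, we obtain an index subset $M\subseteq [N]$ of size $m$ on which $\chi\equiv \chi_0$ is constant. Picking any $t^*\in\chi_0$ and setting $v_{\mathcal{X}}:=u^{t^*}_{\mathcal{X}}$ then realizes the canonical edge in every $\mathcal{A}_{\mathcal{Y}}$ with $\mathcal{Y}\in [M]^k$. The density hypothesis enters through the claim that $\chi_0\neq\emptyset$: for a fixed $\mathcal{Y}$, the events $\{t\in \chi(\mathcal{Y})\}$ for different $t\in [s]$ use disjoint blocks of independent random vertices and are hence mutually independent, each with probability equal to the edge density of $\mathcal{A}_{\mathcal{Y}}$, which is at least $k^{-k}+\eps$. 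Therefore $\Pr[\chi(\mathcal{Y})=\emptyset]\leq (1-k^{-k}-\eps)^s$, and a union bound over the $\binom{N}{k}$ choices of $\mathcal{Y}$ ensures $\chi_0\neq\emptyset$ for some sample realization provided $N^k(1-k^{-k}-\eps)^s<1$.

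The main obstacle is the subtle interplay between the Ramsey parameter $N$ and the sample size $s$: the union bound demands $s=\Omega(\log N)$, whereas Theorem~\ref{thm-Ramsey} with $2^s$ colors forces $N$ to grow at least as a $(k-1)$-fold iterated exponential in $s$, which is incompatible with the first requirement. Overcoming this obstruction will likely require either a two-stage Ramsey argument --- first extracting a homogeneous sub-reduced hypergraph with a constant number of coarse colors depending only on $\eps$ and $k$, and then re-applying the probabilistic argument on the smaller substructure, where the reduced cardinality eases the union bound --- or a more delicate inductive/greedy construction of the $v_{\mathcal{X}}$ assignments that avoids a single global random sample. In either case, the essential quantitative input is the strict inequality density $>k^{-k}$, which furnishes exactly the slack needed to extract the consistent canonical edge after Ramsey homogenization.
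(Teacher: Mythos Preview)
Your proposal is not a proof of the stated theorem at all. The statement under consideration is the classical hypergraph Ramsey theorem (Theorem~\ref{thm-Ramsey}), which the paper simply \emph{cites} from \cite{Ramsey} and does not prove. What you have written is instead an attempted proof of Lemma~\ref{lem-Embedding-reduced}, the embedding lemma for reduced $k$-graphs. These are entirely different statements; the Ramsey theorem is a tool used repeatedly in the proof of Lemma~\ref{lem-Embedding-reduced}, not the lemma itself.

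Setting that aside and reading your text as an attempt at Lemma~\ref{lem-Embedding-reduced}, there is a genuine and fatal gap in your opening ``simplification''. You propose to collapse all of $\alpha^1_{\mathcal X},\dots,\alpha^k_{\mathcal X},\beta^1_{\mathcal X},\dots,\beta^{k-1}_{\mathcal X}$ to a single vertex $v_{\mathcal X}$, so that both \ref{p111} and \ref{p222} reduce to the single requirement $\{v_{\mathcal X_1},\dots,v_{\mathcal X_k}\}\in E(\mathcal A_{\mathcal Y})$ for every $\mathcal Y\in [M]^k$. That implication is correct, but the stronger target is provably \emph{false}: if such single representatives existed in every $(k^{-k}+\eps)$-dense reduced $k$-graph, then by the argument in the proof of Theorem~\ref{1/kthm} combined with Theorem~\ref{thm-turan-reduced}, every $k$-graph $F$ whatsoever (on at most $m$ vertices) would admit a reduced map into $\mathcal A$, yielding $\pi_{k-2}(F)\le k^{-k}$ for all $F$. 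This contradicts, for instance, $\pi_1(K^{(3)-}_4)=1/4>1/27$. So the obstacle you identify between $s$ and $N$ is not a technical nuisance to be ``overcome''; it reflects the fact that the simplified statement you are trying to prove is not true. The paper's proof of Lemma~\ref{lem-Embedding-reduced} crucially exploits the freedom to choose distinct $\alpha$'s and $\beta$'s and a specific pair $\{i',j'\}$, proceeding through Lemmas~\ref{lem-1}--\ref{lem-5} in $2k$ iterated Ramsey stages rather than a single global selection.
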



Next, we state and prove several lemmas that are useful for the proof of  
Lemma~\ref{lem-Embedding-reduced}.
For convenience, we start with some useful notation. 
Recalling that  each constituent of reduced $k$-graphs $\mathcal A$ is always a $k$-partite  $k$-graph.
For convenience,
we consider the {\it normalized degree} of each vertex of  $\mathcal A$ as follows.
Given an $m$-reduced $k$-graphs $\mathcal A$, a $k$-set ${\mathcal Y}=\llbracket y_1, y_2,\dots, y_k\rrbracket \in [m]^k$ and a coordinate $y_{\ell}\in \mathcal Y$ with $\ell\in [k]$, we define 
\[
\deg_{{\mathcal Y} \to y_{\ell} }(v):=\frac{|\{e\in E(\mathcal A_{\mathcal Y}): v\in e\}|} {\prod_{j \in [k]\setminus \{\ell\}}|\mathcal P_{ \mathcal Y \setminus \{y_{j}\}}|}
\]
for each  $v\in \mathcal P_{ \mathcal Y \setminus \{y_{\ell}\}}$.
Moreover, for $\rho>0$, let
\[
\mathcal S^{\rho}_{\mathcal Y\setminus \{y_\ell\}\to y_\ell}:=\{v\in \mathcal P_{ \mathcal Y \setminus \{y_{\ell}\}}: \deg_{\mathcal Y \to y_{\ell}}(v)\ge \rho \}.
\]
To make our notation easier to follow, we refer to vertices that belong to $\mathcal P_{ \mathcal Y \setminus \{y_{\ell}\}}$  as {\it $\ell$-type} (w.r.t. some $\mathcal Y=\llbracket y_1, y_2,\dots, y_k\rrbracket \subset \mathbb N$). 

The following lemmas explore that for sufficiently large $N\in \mathbb N$, each $N$-reduced $k$-graph $\mathcal A$ contains an induced subhypergraph $\mathcal A'$ such that for each $\ell\in [k]$, the proportions of $\ell$-type vertices with a non-negligible  normalized degree in all constituents of  $\mathcal A'$
are approximately the same.

\begin{lemma}\label{lem-1}
	Given $\rho>0$ and integers $ m^*\ge k \ge 3$, there exists $N\in \mathbb N$ such that the following holds.
	For every $N$-reduced $k$-graph  $\mathcal A$,  there exist constants $t_{\ell}$ for $\ell \in [k]$, and there exists an induced subhypergraph $\mathcal A'\subseteq \mathcal A$ on set $M^*\subseteq [N]$ with $|M^*|=m^*$ such that for every $k$-set ${\mathcal Y}=\llbracket y_1, y_2,\dots, y_k\rrbracket \in [M^*]^k$ the following holds
	\begin{equation}\label{eq:almost-same}
	t_{\ell} |\mathcal P_{ \mathcal Y \setminus \{y_{\ell}\}}|\le |\mathcal S^{\rho}_{\mathcal Y\setminus \{y_\ell\}\to y_\ell}| < (t_{\ell}+ \rho)  |\mathcal P_{ \mathcal Y \setminus \{y_{\ell}\}}| \text{~~for~every~} \ell\in [k].
	\end{equation}	
	
\end{lemma}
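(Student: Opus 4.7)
The plan is to apply the Ramsey theorem for multicolored $k$-uniform hypergraphs (Theorem~\ref{thm-Ramsey}) to a natural bucketing of the quantities $|\mathcal S^{\rho}_{\mathcal Y\setminus \{y_\ell\}\to y_\ell}|/|\mathcal P_{\mathcal Y \setminus \{y_\ell\}}|$. Since these ratios lie in $[0,1]$, discretizing them into $\rho$-wide bins yields only finitely many colors, and a monochromatic Ramsey clique gives exactly what the lemma asks for.

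Concretely, for each ordered $k$-set $\mathcal Y=\llbracket y_1,\dots,y_k\rrbracket\in [N]^k$ and each $\ell\in [k]$, set
\[
f_\ell(\mathcal Y):=\frac{|\mathcal S^{\rho}_{\mathcal Y\setminus \{y_\ell\}\to y_\ell}|}{|\mathcal P_{\mathcal Y \setminus \{y_\ell\}}|}\in [0,1].
\]
Let $s:=\lceil 1/\rho\rceil$ and partition $[0,1]$ into $s$ intervals $I_1,\dots,I_s$ of length at most $\rho$, for instance $I_i=[(i-1)\rho,\, i\rho)$ for $i\in [s-1]$ and $I_s=[(s-1)\rho,\,1]$. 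Color each $\mathcal Y\in [N]^k$ by the tuple
\[
c(\mathcal Y):=(c_1(\mathcal Y),\dots,c_k(\mathcal Y))\in [s]^k,
\]
where $c_\ell(\mathcal Y)$ is the index of the interval containing $f_\ell(\mathcal Y)$. This is an edge-coloring of the $k$-uniform clique on $[N]$ using at most $r_R:=s^k$ colors.

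Next, I will choose $N$ using Theorem~\ref{thm-Ramsey} applied with parameters $r_R=s^k$, $k_R=k$, and $n_R=m^*$. The Ramsey theorem then supplies a set $M^*\subseteq [N]$ with $|M^*|=m^*$ on which the coloring $c$ is constant: there is a single tuple $(c_1,\dots,c_k)\in [s]^k$ such that $c(\mathcal Y)=(c_1,\dots,c_k)$ for every $\mathcal Y\in [M^*]^k$. Setting $t_\ell:=(c_\ell-1)\rho$ for each $\ell\in [k]$, and letting $\mathcal A'$ be the induced sub-reduced-$k$-graph of $\mathcal A$ on the index set $M^*$, the definition of the bins immediately yields $t_\ell\le f_\ell(\mathcal Y)<t_\ell+\rho$ for every $\mathcal Y\in [M^*]^k$ and every $\ell\in [k]$, which is exactly~\eqref{eq:almost-same}.

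I do not expect a substantive obstacle in this argument; the statement is essentially a direct Ramsey pigeonhole once the right coloring is identified. The only point that deserves a little care is the bookkeeping: because the proportion $f_\ell(\mathcal Y)$ depends both on the unordered $(k-1)$-set $\mathcal Y\setminus\{y_\ell\}$ and on the ``slot'' $\ell$ inside the ordered tuple $\mathcal Y$, the color has to be recorded as a $k$-tuple rather than a single scalar; this is precisely what lets a single Ramsey application deliver constants $t_1,\dots,t_k$ that work uniformly across all $\ell\in[k]$ and all $\mathcal Y\in[M^*]^k$ simultaneously.
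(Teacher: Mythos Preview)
Your proposal is correct and follows essentially the same approach as the paper's proof: color each $k$-set $\mathcal Y$ by the $k$-tuple of bucketed ratios $|\mathcal S^{\rho}_{\mathcal Y\setminus\{y_\ell\}\to y_\ell}|/|\mathcal P_{\mathcal Y\setminus\{y_\ell\}}|$, apply Ramsey (Theorem~\ref{thm-Ramsey}) with $k_R=k$, $n_R=m^*$, and roughly $\lceil 1/\rho\rceil^k$ colors, and read off $t_\ell$ from the common color. The paper implements the bucketing via $\lfloor f_\ell(\mathcal Y)/\rho\rfloor$ rather than explicit intervals, which sidesteps the minor boundary case $f_\ell(\mathcal Y)=1$ when $1/\rho\in\mathbb N$, but this is cosmetic.
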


\begin{proof}
	We apply Theorem~\ref{thm-Ramsey} with $r_R= (\lfloor \rho^{-1} \rfloor +1)^k$, $k_R=k$ and $n_R= m^*$ to get $N\in \mathbb N$. Let $\mathcal A$ be an $N$-reduced $k$-graph. 
Let us consider  an $r_R$-edge-coloring $k$-uniform clique
	with vertex set $[N]$ as follows. 
	For every ${\mathcal Y}=\llbracket y_1, y_2,\dots, y_k\rrbracket \in [N]^k$ and $\ell\in [k]$,
	we color  ${\mathcal Y}$ with the
	triple 
	\[
	\left(\left\lfloor \frac{|\mathcal S^{\rho}_{\mathcal Y\setminus \{y_1\}\to y_1}|}{\rho |\mathcal P_{ \mathcal Y \setminus \{y_{1}\}}|} \right\rfloor, \left\lfloor \frac{|\mathcal S^{\rho}_{\mathcal Y\setminus \{y_2\}\to y_2}|}{\rho |\mathcal P_{ \mathcal Y \setminus \{y_{2}\}}|} \right\rfloor, \dots, \left\lfloor \frac{|\mathcal S^{\rho}_{\mathcal Y\setminus \{y_k\}\to y_k}|}{\rho |\mathcal P_{ \mathcal Y \setminus \{y_{k}\}}|} \right\rfloor \right).
	\]
By Theorem~\ref{thm-Ramsey}, there exists a subset
	$M^*\subseteq [N]$ with $|M^*|=m^*$ such that all $k$-sets induced on $M^*$ have the same color, say $(t'_1,  t'_2, \dots,  t'_k)$.
	Therefore, the induced subhypergraph $\mathcal A'$ on set $M^*$ satisfies the statement of the lemma with $t_{\ell}=\rho t'_{\ell}$ for $\ell \in [k]$.	
\end{proof}

Using Lemma~\ref{lem-1}, we shall show that every $N$-reduced $k$-graph $\mathcal A$ contains a well-behaved induced subhypergraph when its density larger than
$k^{-k}$.

\begin{lemma}\label{lem-2}
	Given  $\eps>0$, there exists  $\rho >0$ such that for integers $ m> k \ge 3$, there exists $N\in \mathbb N$ such that the following holds.
	For every $(k^{-k}+\eps)$-dense $N$-reduced $k$-graph $\mathcal A$,  there exists an induced subhypergraph $\mathcal A'\subseteq \mathcal A$ on set $M\subseteq [N]$ with $|M|=m$ that satisfies the following property:
	\begin{itemize}
		\item  There is a pair $\{i',j'\}\in [k]^2$ with $i'<j'$ such that for all $I=\llbracket z_1, z_2, \dots, z_{k+1}\rrbracket\in [M]^{k+1}$ with $\mathcal X:=I \setminus \{z_{i'}, z_{j'+1}\}$, we have 
\[
|\mathcal S^{\rho}_{\mathcal X\to z_{i'} }\cap \mathcal S^{\rho}_{\mathcal X\to z_{j'+1} }|\ge \rho |\mathcal P_{ \mathcal X}|.
\]
	\end{itemize} 		
\end{lemma}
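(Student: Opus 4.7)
The plan is to combine Lemma~\ref{lem-1}, Ramsey's theorem (Theorem~\ref{thm-Ramsey}), and a convexity-based double-counting argument. Choose $\rho=\rho(\eps,k)>0$ sufficiently small (the precise requirement emerges below, with $\rho\ll\eps$ always enough), and auxiliary integers $m_0=m_0(\eps,k)$ and $m^*$ large, where $m^*$ is chosen so that Theorem~\ref{thm-Ramsey} with $r_R=2^{\binom{k}{2}}$ and $k_R=k+1$ guarantees an $m_0$-sized monochromatic subset. Apply Lemma~\ref{lem-1} with parameters $\rho$ and $m^*$ to $\mathcal{A}$ to obtain constants $t_1,\dots,t_k$ and an induced subhypergraph on some $M^*\subseteq[N]$ of size $m^*$ on which~\eqref{eq:almost-same} holds for every $\mathcal{Y}\in[M^*]^k$.

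First I convert the density bound into a lower bound on $\sum_\ell t_\ell$. For any $\mathcal{Y}\in[M^*]^k$, edges of $\mathcal{A}_\mathcal{Y}$ using a vertex of normalized degree less than $\rho$ contribute at most $k\rho\prod_{\mathcal{X}\in[\mathcal{Y}]^{k-1}}|\mathcal{P}_\mathcal{X}|$ to the total, so at least $(k^{-k}+\eps-k\rho)\prod|\mathcal{P}_\mathcal{X}|$ edges have all endpoints in the respective sets $\mathcal{S}^\rho$; since each such set has size at most $(t_\ell+\rho)|\mathcal{P}_\mathcal{X}|$, this forces $\prod_\ell(t_\ell+\rho)\ge k^{-k}+\eps-k\rho$. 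Combining AM--GM with $(1+x)^{1/k}\ge 1+x/(2k)$ for small $x\ge 0$ yields $\sum_\ell t_\ell\ge 1+c\eps$ for some $c=c(k)>0$ (e.g.\ $c=k^{k-1}/8$), provided $\rho$ is small enough. Next, colour each $(k+1)$-set $I\in[M^*]^{k+1}$ by the subset $C(I)\subseteq\binom{[k]}{2}$ of pairs $(i',j')$ with $i'<j'\le k$ for which the intersection condition of the statement holds in $I$ with $\mathcal{X}=I\setminus\{z_{i'},z_{j'+1}\}$. Theorem~\ref{thm-Ramsey} produces $M_0\subseteq M^*$ of size $m_0$ on which $C(I)\equiv C$ is constant; if $C\ne\emptyset$, choose any $(i',j')\in C$ and any $m$-subset $M\subseteq M_0$, completing the proof.

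The main obstacle is to rule out $C=\emptyset$. Assuming $C=\emptyset$, pick an evenly-spaced $\mathcal{X}\subseteq M_0$ of size $k-1$ so that its $k$ gaps within $M_0\setminus\mathcal{X}$ all have sizes $n_\ell=m_0/k+O(1)$. For $v\in\mathcal{P}_\mathcal{X}$ let $N(v)=\{z\in M_0\setminus\mathcal{X}:v\in\mathcal{S}^\rho_{\mathcal{X}\to z}\}$ and $n_\ell(v)=|N(v)\cap\mathrm{gap}_\ell|$. Three estimates enter: (i) Jensen's inequality and the lower bound of~\eqref{eq:almost-same} give $\sum_v\binom{|N(v)|}{2}\ge|\mathcal{P}_\mathcal{X}|\binom{\sum_\ell n_\ell t_\ell}{2}$; (ii) concentrating the coordinatewise mass at the extremes $0$ or $n_\ell$ yields the tight upper bound $\sum_v\binom{n_\ell(v)}{2}\le(t_\ell+\rho)|\mathcal{P}_\mathcal{X}|\binom{n_\ell}{2}$, where the factor $t_\ell$ rather than the trivial $1$ is decisive; (iii) the assumption $C=\emptyset$ gives $|\mathcal{S}^\rho_{\mathcal{X}\to z}\cap\mathcal{S}^\rho_{\mathcal{X}\to z'}|<\rho|\mathcal{P}_\mathcal{X}|$ for every pair $\{z,z'\}\subseteq M_0\setminus\mathcal{X}$ straddling some $x\in\mathcal{X}$, so $\sum_v\bigl[\binom{|N(v)|}{2}-\sum_\ell\binom{n_\ell(v)}{2}\bigr]<\rho|\mathcal{P}_\mathcal{X}|\binom{m_0}{2}$. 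Combining (i)--(iii), using $n_\ell\approx m_0/k$, and letting $m_0\to\infty$ gives $(\sum_\ell t_\ell)^2-\sum_\ell t_\ell\le\rho k(k+1)$, which contradicts $\sum_\ell t_\ell\ge 1+c\eps$ as soon as $\rho<c\eps/(2k(k+1))$. Replacing (ii) by the trivial bound $\sum_v\binom{n_\ell(v)}{2}\le|\mathcal{P}_\mathcal{X}|\binom{n_\ell}{2}$ would only yield $(\sum_\ell t_\ell)^2\le k+O(\rho)$, i.e.\ $\sum_\ell t_\ell\lesssim\sqrt{k}$, which for $k\ge 3$ is not contradicted by density slightly exceeding $k^{-k}$; the factor $t_\ell$ in (ii) is therefore the crux of the argument.
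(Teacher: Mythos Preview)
Your argument is correct, but it takes a substantially more elaborate route than the paper's. Both proofs begin the same way: apply Lemma~\ref{lem-1} to obtain the constants $t_1,\dots,t_k$, and use the $(k^{-k}+\eps)$-density together with AM--GM to deduce $\sum_\ell t_\ell\ge 1+c\eps$. The difference lies in how one forces a pair $\{i',j'\}$ with large intersection. The paper colours $(2k-1)$-sets $Q=\llbracket y_1,x_1,\dots,x_{k-1},y_k\rrbracket$ by a single pair $\{i,j\}$ using a direct pigeonhole: for the fixed $(k-1)$-set $\mathcal X=\{x_1,\dots,x_{k-1}\}$ and the $k$ interleaved points $y_1,\dots,y_k$, the sets $\mathcal S^\rho_{\mathcal X\to y_\ell}$ have total size exceeding $|\mathcal P_{\mathcal X}|$, so Bonferroni's inequality immediately gives some pair $\{i,j\}$ with $|\mathcal S^\rho_{\mathcal X\to y_i}\cap\mathcal S^\rho_{\mathcal X\to y_j}|\ge\rho|\mathcal P_{\mathcal X}|$. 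Ramsey then makes this pair uniform. You instead colour $(k+1)$-sets by the \emph{full subset} of good pairs, apply Ramsey, and then rule out the empty colour class via a second-moment argument over all $z$ in the gaps of a single evenly-spaced $\mathcal X$, the crux being the refined bound $\sum_v\binom{n_\ell(v)}{2}\le(t_\ell+\rho)|\mathcal P_{\mathcal X}|\binom{n_\ell}{2}$. This works, but it replaces a one-line inclusion--exclusion on $k$ sets by a Jensen/convexity computation and an asymptotic in $m_0$; the paper's argument is local (a single tuple suffices), while yours is an averaging over many.

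One small quantifier slip: you write $m_0=m_0(\eps,k)$, but you also need $m_0\ge m$ in order to extract an $m$-subset at the end, so $m_0$ should be chosen as $\max\{m,\,m_0'(\eps,k)\}$ for a suitable $m_0'$.
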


\begin{proof}
	Given $\eps >0$ (without loss of generality let $\eps<1/2$),  let $\rho=\frac{\eps}{k^k}$ and  $\rho_0=\rho\binom{k}{2}$.
	We first apply Theorem~\ref{thm-Ramsey} with  $r_R= \binom{k}{2}$, $k_R=2k-1$ and $n_R = 2m+1$ to get $N'\in \mathbb N$.
 Then we apply Lemma~\ref{lem-1} with $\rho_0$ and $m^*=N'$ to get $N$.
	
	Let  $\mathcal A$ be a $(k^{-k}+\eps)$-dense $N$-reduced $k$-graph and let $\mathcal A^*$ be the induced subhypergraph on set $M^*$ with $|M^*|=m^*$ provided by Lemma~\ref{lem-1} along with the
	reals $t_{\ell}$ for $\ell\in [k]$ with the properties given in the statement of Lemma~\ref{lem-1}.
	Then, we first claim that 
\begin{equation}\label{eq:t-sum}
\sum_{\ell\in [k]} t_{\ell} \ge 1+\rho_0.
\end{equation}
	If not, suppose that $\sum_{\ell=1}^{k} t_{\ell} < 1+\rho_0$. Given ${\mathcal Y}=\llbracket y_1, y_2,\dots, y_k\rrbracket\in [M^*]^k$ and $\ell\in [k]$, we have $\deg_{\mathcal Y \to y_{\ell}}(u)< \rho_0$ for each vertex $u\in \mathcal P_{ \mathcal Y \setminus \{y_{\ell}\}}\setminus \mathcal S^{\rho_0}_{\mathcal Y\setminus \{y_\ell\}\to y_\ell}$. 
By Lemma~\ref{lem-1}, we obtain that
\begin{align*}
|E(\mathcal A_{\mathcal Y})|&< 
\sum_{\ell=1}^{k} \left|\mathcal P_{ \mathcal Y \setminus \{y_{\ell}\}}\setminus \mathcal S^{\rho_0}_{\mathcal Y\setminus \{y_\ell\}\to y_\ell}\right|\cdot\left(\rho_0 \prod_{j \in [k]\setminus \{\ell\}}|\mathcal P_{ \mathcal Y \setminus \{y_{j}\}}|\right)
+ \prod_{\ell \in [k]}\left|\mathcal S^{\rho_0}_{\mathcal Y\setminus \{y_\ell\}\to y_\ell}\right|\\
&\stackrel{\eqref{eq:almost-same}}{\le}
	\left(k\rho_0+ \prod_{\ell \in [k]}(t_{\ell}+\rho_0)\right) \prod_{j \in [k]}|\mathcal P_{ \mathcal Y \setminus \{y_{j}\}}|.
\end{align*}
By the AM-GM inequality, the edge density of $\mathcal A_{\mathcal Y}$  has no more than
	\[
	k\rho_0+\left(\frac{\sum_{\ell=1}^{k}(t_{\ell}+\rho_0)}{k}\right)^{k}\le k\rho_0+\left(\frac{1+(k+1)\rho_0}{k}\right)^{k}<k^{-k}+\eps,
	\]
	which contradicts that $\mathcal A$ is $(k^{-k}+\eps)$-dense, where the last inequality follows from  $\rho_0=\eps k^{1-k}$.

	We next consider  a $\binom{k}{2}$-edge-coloring  $(2k-1)$-uniform clique
on set $M^*$ as follows. 
	Given a $(2k-1)$-set  $Q=\llbracket y_1, x_1, y_2, x_2,\dots, y_{k-1}, x_{k-1}, y_k\rrbracket \subset M^*$ and $\mathcal X:=\llbracket x_1, x_2, \dots, x_{k-1}\rrbracket$, let the edge-coloring $\phi: [M^*]^{2k-1}\to [k]^2$ satisfy  the following Algorithm~\ref{algorithm 1}:
	\begin{algorithm}[ht]
		\caption{}
		\label{algorithm 1}
		Given a $(2k-1)$-set $Q$,	{\bf input: }  $\mathcal S^{\rho}_{\mathcal X \to \{y_\ell\}}$ for $\ell\in [k]$, and {\bf output:} $\phi(Q)$.
		\begin{algorithmic}[1]
		\For{$i = 1$ to $k -1$}
		\For{$j = i + 1$ to $k$}
		\If{$|\mathcal S^{\rho}_{\mathcal X \to y_i} \cap \mathcal S^{\rho}_{\mathcal X \to y_j}| \ge \rho|\mathcal P_{\mathcal X}|$}
		\State \Return $\{i, j\}$
		\EndIf
		\EndFor
		\EndFor
		\end{algorithmic}
	\end{algorithm}

	We claim that Algorithm~\ref{algorithm 1} is valid. If not,
	suppose that $|\mathcal S^{\rho}_{\mathcal X \to y_i} \cap \mathcal S^{\rho}_{\mathcal X \to y_j}|< \rho|\mathcal P_{\mathcal X}|$ for all $\{i,j\}\in [k]^2$, then we have
\begin{align*}
|\mathcal P_{\mathcal X}|  &\ge 
|\mathcal S^{\rho}_{\mathcal X \to y_1} \cup  \dots \cup \mathcal S^{\rho}_{\mathcal X \to y_k}|
\ge  \sum_{\ell\in [k]}|\mathcal S^{\rho}_{\mathcal X \to y_\ell}|- \sum_{\{i,j\}\in [k]^2} |\mathcal S^{\rho}_{\mathcal X \to y_i} \cap \mathcal S^{\rho}_{\mathcal X \to y_j}|\\
&\stackrel{(\rho_0>\rho)}{\ge} \sum_{\ell\in [k]}|\mathcal S^{\rho_0}_{\mathcal X \to y_\ell}|- \sum_{\{i,j\}\in [k]^2} |\mathcal S^{\rho}_{\mathcal X \to y_i} \cap \mathcal S^{\rho}_{\mathcal X \to y_j}|
\stackrel{\eqref{eq:almost-same}}{>}
\left(\sum_{\ell\in [k]}t_{\ell}-\rho_0\right)|\mathcal P_{\mathcal X}|\stackrel{\eqref{eq:t-sum}}{\ge} |\mathcal P_{\mathcal X}|,
\end{align*}
which is impossible. By Theorem~\ref{thm-Ramsey}, we would obtain a set
	$M_0\subseteq M^*$ with $|M_0|=2m+1$ such that all $(2k-1)$-sets induced on $M_0$ have the same color, say $\{i',j'\}$ with $i'<j'$.
	
	For convenience, set $M_0=[2m+1]$. We choose $M=\{2, 4, 6, \dots, 2m\}\subset M_0$.
	Let $\mathcal A'$ be the induced subhypergraph of $\mathcal A$ with index set $M$. Then $\mathcal A'$  satisfies the statement of the lemma. Indeed, for any $I=\llbracket z_1, {z_2}, \dots, {z_{k+1}} \rrbracket\in [M]^{k+1}$, 
	we can extend  $I$ to a $(2k-1)$-set $Q=\llbracket y_1, x_1, y_2, x_2,\dots, y_{k-1}, x_{k-1}, y_k \rrbracket$ by adding elements from $M_0$ such that   $\{ x_1,x_2, \dots, x_{k-1} \}=I\setminus \{z_{i'}, z_{j'+1}\}$ and $y_{i'}=z_{i'}$ and $y_{j'}=z_{j'+1}$.
	Set $\mathcal X= I \setminus \{z_{i'}, z_{j'+1}\}$. Since $\phi(Q)=\{i',j'\}$,  we have 
\[
|\mathcal S^{\rho}_{\mathcal X\to z_{i'} }\cap \mathcal S^{\rho}_{\mathcal X\to z_{j'+1} }|\ge \rho |\mathcal P_{ \mathcal X}|.
\]
\end{proof}

Recalling the statement of Lemma~\ref{lem-Embedding-reduced}, we need to find a ``well-behaved" induced subhypergraph $\mathcal A' $ in $(k^{-k}+\eps)$-dense $N$-reduced $k$-graph $\mathcal A $, where ``well-behaved" means each vertex class $\mathcal P_{\mathcal X}$ of $\mathcal A' $ can find $(2k-1)$ vertices satisfying 
properties~\ref{p111} and~\ref{p222} in Lemma~\ref{lem-Embedding-reduced}. 
Given a reduced $k$-graph  $\mathcal A$, with candidate sets of good properties for each constituent, such as consider  $\ell$-type vertices with non-negligible normalized degree for each constituent, we aim to choose a representative vertex from each vertex class of $\mathcal A$ that possesses good properties for each constituent it belongs to. Since each vertex class of $\mathcal A$ may belong to many different constituents, it is possible that no single vertex is suitable for all constituents of $\mathcal A$, even if the size of the candidate sets relative to each constituent involving the vertex class is linearly proportional. However, by leveraging the power of Ramsey theory, it is possible to find such a representative vertex by passing to the induced subhypergraph of $\mathcal A$.
The following lemma is intended to identify such representative vertices.

\begin{lemma}\label{lem-3}
	Given  $\rho>0$, integers $ m\ge k \ge 3$ and  $t\in [k]$, there exists  $N\in \mathbb N$ such that the following holds.
	If $\mathcal A$ is an $N$-reduced $k$-graph and for each $k$-set $\mathcal Y=\llbracket y_1, y_2, \dots, y_k \rrbracket \in [N]^k$, then there is a subset $\mathcal S_{\mathcal Y\setminus \{y_t\}\to y_t}\subseteq  \mathcal P_{\mathcal Y\setminus \{y_t\}}$ satisfying 
	$|\mathcal S_{\mathcal Y\setminus \{y_t\}\to y_t}|\ge \rho |\mathcal P_{\mathcal Y\setminus \{y_t\}}|$.
	Then there exists an induced subhypergraph $\mathcal A'\subseteq \mathcal A$ on index set $M\subseteq [N]$ with $|M|=m$ and vertices $v_{\mathcal X}\in \mathcal P_{\mathcal X}$ for all $\mathcal X\in [M]^{k-1}$ such that  the following property holds:
	\begin{enumerate}	
		\item[$\bullet$]  For each $\mathcal X=\llbracket x_1, \dots, x_{k-1}\rrbracket \in [M]^{k-1}$, the vertex $v_{\mathcal X}$ satisfies 
		\[
		v_{\mathcal X}\in \bigcap_{
				x_{t-1}<y<x_{t},~
				y\in M} \mathcal S_{\mathcal X \to y},
		\]
		where $x_{t-1}= 0$ for $t=1$ and $x_t= N$ for $t=k$. 
	\end{enumerate}	
\end{lemma}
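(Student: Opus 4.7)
I would apply Theorem~\ref{thm-Ramsey} to a bounded-palette coloring of the $k$-subsets of $[N]$, constructed so that the resulting monochromatic index set enjoys enough local consistency of the good sets $\mathcal{S}_{\mathcal{X}\to y}$ to allow selecting $v_{\mathcal{X}}$'s in the required common intersection. For each $\mathcal{X}\in[N]^{k-1}$, partition $\mathcal{P}_{\mathcal{X}}$ into $R:=\lceil 2/\rho\rceil$ blocks $\mathcal{P}_{\mathcal{X}}^{(1)},\dots,\mathcal{P}_{\mathcal{X}}^{(R)}$ of roughly equal size, and color each $\mathcal{Y}\in[N]^k$ by the set $J(\mathcal{Y})\subseteq[R]$ of block indices $j$ with $|\mathcal{P}_{\mathcal{X}_{\mathcal{Y}}}^{(j)}\cap\mathcal{S}_{\mathcal{X}_{\mathcal{Y}}\to y_t}|\ge (\rho/2)|\mathcal{P}_{\mathcal{X}_{\mathcal{Y}}}^{(j)}|$, where $\mathcal{X}_{\mathcal{Y}}=\mathcal{Y}\setminus\{y_t\}$; the density hypothesis $|\mathcal{S}_{\mathcal{X}_{\mathcal{Y}}\to y_t}|\ge\rho|\mathcal{P}_{\mathcal{X}_{\mathcal{Y}}}|$ together with averaging across blocks ensures that $J(\mathcal{Y})\ne\emptyset$. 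An application of Theorem~\ref{thm-Ramsey} with at most $2^R$ colors, $k_R=k$ and $n_R$ sufficiently large extracts a monochromatic $M_1\subseteq[N]$ on which $J(\cdot)\equiv J^*$.

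Pick any $j^*\in J^*$ and restrict attention to the blocks $\mathcal{P}_{\mathcal{X}}^{(j^*)}$ for $\mathcal{X}\in[M_1]^{k-1}$; within these blocks the restricted good sets $\mathcal{S}_{\mathcal{X}\to y}\cap \mathcal{P}_{\mathcal{X}}^{(j^*)}$ have density at least $\rho/2$ for every admissible $y\in M_1$. One then iterates the same block-refinement Ramsey step on the restricted problem. The proof terminates once the density of the restricted good sets exceeds $1-1/m$: at that point, for each fixed $\mathcal{X}\in[M]^{k-1}$, the complements of the at most $m-k+1$ admissible good sets together cover less than all of the current vertex class, so their common intersection is nonempty by the union bound and $v_{\mathcal{X}}$ can be chosen from it.

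The main obstacle, and the hardest part of the argument, is organising the iteration so that the number of Ramsey applications (and hence the tower of Ramsey numbers yielding $N$) depends only on $\rho,m,k,t$ and not on the a priori unbounded sizes $|\mathcal{P}_{\mathcal{X}}|$. A naive block-refinement merely halves the density each round and therefore needs $\Theta(\log|\mathcal{P}_{\mathcal{X}}|)$ rounds, which is not acceptable. The fix is to design the Ramsey coloring so that monochromaticity actually \emph{boosts} the density on the chosen block: for example, color each $\mathcal{Y}\in[N]^k$ by the quantised density profile of $\mathcal{S}_{\mathcal{X}_{\mathcal{Y}}\to y_t}$ across all $R$ blocks (discretised to multiples of $\rho$, in the same spirit as the coloring used in the proof of Lemma~\ref{lem-1}), and then pass to the block where the common quantised density is maximal. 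A concavity/pigeonhole argument shows that each such Ramsey step achieves a multiplicative density gain depending only on $\rho$, so only $O(\log m/\log(1+\rho))$ rounds are needed, and composing the Ramsey applications gives the required finite $N=N(\rho,m,k,t)$.
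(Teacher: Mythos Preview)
Your iteration scheme has a genuine gap at the ``multiplicative density gain'' step. After a single round of your profile-coloring Ramsey argument, the quantised density profile $(d_1^*,\dots,d_R^*)$ is common to all $k$-sets, and by averaging the maximum entry satisfies $d_{j^*}^*\ge \rho-\rho_q$ where $\rho_q$ is the quantisation step. This only \emph{maintains} the density up to the quantisation error; it does not increase it. In the worst case each $\mathcal S_{\mathcal X\to y}$ is spread evenly across the blocks, so every block has density exactly~$\rho$, the profile is constant, and passing to any single block leaves the density unchanged. No concavity or pigeonhole argument can extract a gain here, because the extremal configuration is precisely the uniform one. Consequently your iteration never pushes the density above its initial value~$\rho$, and you cannot reach the threshold $1-1/m$ needed for the union-bound finish.

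The paper's proof avoids density iteration entirely by applying Ramsey at a higher uniformity. One colours \emph{$m$-sets} (not $k$-sets) of $[N]$ with just two colours: an $m$-set $Q=\llbracket a_1,\dots,a_m\rrbracket$ is ``blue'' if, writing $\mathcal X$ for the $k-1$ elements of $Q$ outside positions $t,\dots,t+m-k$ and $L$ for the remaining $m-k+1$ elements, there exists a vertex in $\bigcap_{\ell\in L}\mathcal S_{\mathcal X\to\ell}$, and ``red'' otherwise. Ramsey then yields a monochromatic set $S$ of size $n_R\approx m/\rho$. The colour must be blue: fixing any $(k-1)$-set $\mathcal X\subseteq S$ and letting the remaining $n_R-k+1\ge m/\rho$ elements play the role of~$y$, a direct double-count shows some vertex of $\mathcal P_{\mathcal X}$ lies in at least $m-k+1$ of the sets $\mathcal S_{\mathcal X\to y}$, which produces a blue $m$-set. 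Finally one takes $M$ to be an arithmetic-progression-like $m$-subset of $S$ spaced so that for every $\mathcal X\in[M]^{k-1}$ the admissible $y\in M$ with $x_{t-1}<y<x_t$ can be embedded into a single $m$-set of $S$ with $\mathcal X$ in the correct positions; blueness of that $m$-set supplies $v_{\mathcal X}$. The key idea you are missing is to encode the \emph{existence of the desired common vertex} directly as the colour, and to work at uniformity $m$ so that one Ramsey application and one pigeonhole step suffice.
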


\begin{proof}
	Given $\rho >0$, $ m\ge k \ge 3$ and $t\in [k]$, we apply Theorem~\ref{thm-Ramsey} with $r_R= 2$  $k_R=m$ and $n_R =\max \{m^2, 2 \lceil\frac{m}{\rho} \rceil\} $ to get $N\in \mathbb N$. 
	Let $\mathcal A$ be an $N$-reduced $k$-graph  satisfying the condition of the lemma.
	We now construct a  $2$-edge-coloring  $m$-uniform clique on the vertex set $[N]$ as follows.
	For each $m$-set $Q=\llbracket a_1, a_2, \dots, a_m \rrbracket\in  [N]^{m}$, 
	let $L=\llbracket a_t, a_{t+1}, \dots, a_{t+m-k}\rrbracket$ and $\mathcal X= Q\setminus L$. Clearly, $\mathcal X\in [N]^{k-1}$. We say that 
	$Q$ is colored blue if there exists a vertex $v_{\mathcal X} \in \mathcal P_{\mathcal X}$ such that 
	$v_{\mathcal X} \in \bigcap_{\ell \in L} \mathcal S_{\mathcal X\to \ell}$;
	otherwise, $Q$ is colored red.

	By Theorem~\ref{thm-Ramsey}, there exists a set $S\subseteq [N]$ with $|S|=n_R$ such that all edges $Q$ induced on set $S$ have same color. For convenience, we rearrange the indices in $S$ and write $S=[n_R]$.
Let $J=\llbracket t, t+1, \dots, {n_R-k+t}\rrbracket$ and $\mathcal X=[n_R]\setminus J$.
By the condition of lemma, each set
$\mathcal S_{\mathcal X \to j}\subseteq  \mathcal P_{\mathcal X}$ for $j\in J$ satisfies $|\mathcal S_{\mathcal X \to j}|\ge \rho |\mathcal P_{\mathcal X}|$.
Since $n_R\ge 2\lceil\frac{m}{\rho} \rceil$,  by double counting, there exists a vertex $v_{\mathcal X} \in \mathcal P_{\mathcal X}$ and a subset $I\subseteq J $ with $|I|=  m-k+1$ such that 
\[
v_{\mathcal X} \in \bigcap_{i\in I} \mathcal S_{\mathcal X \to i},
\]
which implies that the common color for $m$-sets induced on set $S$ is  blue. 

Now, we choose $M=\{m, 2m \dots, m^2\}$. Clearly, $M\subset S$.
	For each $\mathcal X=\llbracket x_1, x_2, \dots, x_{k-1} \rrbracket \in  [M]^{k-1}$,
	we  extend  $\mathcal X$ to an $m$-set $Q$ by adding $(m-k+1)$-elements from $\{x_{t-1}+1, x_{t-1}+2, \dots, x_{t}-1\}$  such that $Q$ contains all elements in set $\{x\in M: x_{t-1}<x<x_t\}$. Since $Q$ is colored blue, 
	there is a vertex $v_{\mathcal X}\in \mathcal P_{\mathcal X}$  satisfying
	\[
	v_{\mathcal X}\in \bigcap_{
			x_{t-1} <y<x_t,~y\in M } \mathcal S_{\mathcal X \to y}.	
	\]
\end{proof}

The following lemma is designed to select the representative vertices based on sets $
\mathcal S^{\rho}_{\mathcal X\to z_{i'} }\cap \mathcal S^{\rho}_{\mathcal X\to z_{j'+1}}$ given by Lemma~\ref{lem-2}.

\begin{lemma}\label{lem-4}
Given $\rho>0$, integers $ m>k \ge 3$ and $\{i',j'\}\in [k]^2$ with $i'<j'$, there exists  $N\in \mathbb N$ such that the following holds.
Let $\mathcal A$ is an $N$-reduced $k$-graph. If for each $I=\llbracket z_1, z_2, \dots, z_{k+1} \rrbracket\in  [N]^{k+1}$ and $\mathcal X =I \setminus \{z_{i'}, z_{j'+1}\}$, there are subsets $\mathcal S_{z_{i'}\leftarrow \mathcal X\rightarrow z_{j'+1}}\subseteq \mathcal P_{\mathcal X}$ satisfying 
	$|\mathcal S_{z_{i'}\leftarrow \mathcal X\rightarrow z_{j'+1}} |\ge \rho |\mathcal P_{\mathcal X}|$.
	Then there exists an induced subhypergraph $\mathcal A'\subseteq \mathcal A$ on set $M\subseteq [N]$ with $|M|=m$ and vertices $v_{\mathcal X}\in \mathcal P_{\mathcal X}$ for all $\mathcal X\in [M]^{k-1}$ such that  the following property holds:
\begin{enumerate}	
\item[$\bullet$]  For each $\mathcal X=\llbracket x_1, \dots, x_{k-1}\rrbracket \in  [M]^{k-1}$, the vertex $v_{\mathcal X}$ satisfies 
\[
v_{\mathcal X}\in \bigcap_{
	\begin{array}{c}
		x_{i'-1}<y<x_{i'},~x_{j'-1}<y'<x_{j'} \\
		\{y,y'\}\subset M
\end{array} } \mathcal S_{y \leftarrow \mathcal X\rightarrow y'},
\]
		where $x_{i'-1}= 0$ for $i'=1$ and $x_{j'}= N$ for $j'=k$.
	\end{enumerate}	
\end{lemma}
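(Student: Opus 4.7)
The plan is to adapt the Ramsey-based strategy of Lemma~\ref{lem-3} to two simultaneous slots, using a K\H{o}v\'ari--S\'os--Tur\'an (KST) estimate to handle the resulting product structure of the sets $\mathcal{S}_{y\leftarrow\mathcal{X}\rightarrow y'}$. Set $p := m-k+1$, pick $n_0=n_0(\rho,p)$ large enough that every bipartite graph with both parts of size $n_0$ and at least $\rho n_0^2$ edges contains a $K_{p,p}$ (which exists by the classical KST theorem), let $m_R := (k-1)+2p$, and set $n_R:=\max\{k+2n_0,\,m(p+1)\}$. Apply Theorem~\ref{thm-Ramsey} with $r_R=2$, $k_R=m_R$ and $n_R$ to obtain the promised $N$.

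Define a $2$-colouring of $\binom{[N]}{m_R}$ as follows. For $Q=\{a_1<\dots<a_{m_R}\}$, set $L_1^Q:=\{a_{i'},\dots,a_{i'+p-1}\}$, $L_2^Q:=\{a_{j'+p},\dots,a_{j'+2p-1}\}$, and $\mathcal{X}_Q:=Q\setminus(L_1^Q\cup L_2^Q)$; the positional choice ensures that, for every $(y,y')\in L_1^Q\times L_2^Q$, the element $y$ sits in slot $i'$ and $y'$ in slot $j'$ of the $(k+1)$-set $\mathcal{X}_Q\cup\{y,y'\}$. Colour $Q$ \emph{blue} if there exists $v\in\mathcal{P}_{\mathcal{X}_Q}$ with $v\in\mathcal{S}_{y\leftarrow\mathcal{X}_Q\rightarrow y'}$ for every $(y,y')\in L_1^Q\times L_2^Q$, and \emph{red} otherwise. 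By Theorem~\ref{thm-Ramsey} there is a monochromatic $S\subseteq[N]$ with $|S|=n_R$. To rule out the red case, relabel $S=[n_R]$ and fix $\mathcal{X}^*\subset S$ of size $k-1$ at positions $\{1,\dots,i'-1\}\cup\{i'+n_0,\dots,j'+n_0-1\}\cup\{j'+2n_0,\dots,k+2n_0-1\}$, leaving candidate sets $\mathcal{Y},\mathcal{Y}'\subseteq S$ of size $n_0$ at slots $i',j'$. The hypothesis and double counting yield some $v^*\in\mathcal{P}_{\mathcal{X}^*}$ that belongs to $\mathcal{S}_{y\leftarrow\mathcal{X}^*\rightarrow y'}$ for at least $\rho n_0^2$ pairs $(y,y')\in\mathcal{Y}\times\mathcal{Y}'$; by the choice of $n_0$, KST produces a $K_{p,p}$ in the corresponding bipartite graph, giving $L_1\subseteq\mathcal{Y},\,L_2\subseteq\mathcal{Y}'$ of size $p$ with $v^*\in\bigcap_{y\in L_1,\,y'\in L_2}\mathcal{S}_{y\leftarrow\mathcal{X}^*\rightarrow y'}$. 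Then $\mathcal{X}^*\cup L_1\cup L_2$ is blue, contradicting redness, so the common colour is blue.

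Finally, take $M:=\{p+1,\,2(p+1),\,\dots,\,m(p+1)\}\subseteq S$ (possible since $n_R\geq m(p+1)$). For each $\mathcal{X}=\{x_1<\dots<x_{k-1}\}\subseteq M$, extend $\mathcal{X}$ to an $m_R$-subset $Q^{\mathcal{X}}\subseteq S$ by choosing $p$ elements of $(x_{i'-1},x_{i'})\cap S$ as $L_1^{Q^{\mathcal{X}}}$ that contain all of $M\cap(x_{i'-1},x_{i'})$, and similarly picking $p$ elements of $(x_{j'-1},x_{j'})\cap S$ as $L_2^{Q^{\mathcal{X}}}$; this is feasible because $|M\cap(x_{i'-1},x_{i'})|\leq|M\setminus\mathcal{X}|=p$, and by the choice of spacing each such interval contains at least $p$ elements of $S$. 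Blueness of $Q^{\mathcal{X}}$ furnishes a witness $v\in\mathcal{P}_{\mathcal{X}}$ in $\mathcal{S}_{y\leftarrow\mathcal{X}\rightarrow y'}$ for every $(y,y')\in L_1^{Q^{\mathcal{X}}}\times L_2^{Q^{\mathcal{X}}}$, in particular for every $(y,y')\in(M\cap(x_{i'-1},x_{i'}))\times(M\cap(x_{j'-1},x_{j'}))$; setting $v_{\mathcal{X}}:=v$ completes the construction. The main obstacle is the KST step: one must quantitatively choose $n_0$ in terms of $\rho$ and $p$ so that the $K_{p,p}$-finding argument succeeds, and carefully bookkeep positions so that the $m_R$-sets built in the final step actually lie inside the monochromatic set $S$.
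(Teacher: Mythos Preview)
Your proof follows the same overall architecture as the paper's: a two-colouring of the $(2m-k+1)$-subsets of $[N]$ according to whether a witness vertex exists for the full product $\bigcap_{\ell\in L_1,\,\ell'\in L_2}\mathcal S_{\ell\leftarrow\mathcal X\rightarrow\ell'}$, followed by Ramsey and an extraction of a well-spaced $M$. The genuine difference lies in the step that rules out the red colour. The paper argues by averaging over a single \emph{diagonal} family of $\lceil m/\rho\rceil$ pairs, obtaining a vertex that lies in $m-k+1$ of the sets $\mathcal S_{\ell_j\leftarrow\mathcal X\rightarrow\ell'_j}$; as written this only certifies membership in $p$ diagonal sets, not in the full $p\times p$ product that the blue condition demands. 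Your use of the K\H{o}v\'ari--S\'os--Tur\'an theorem is therefore not merely an alternative but actually supplies the missing combinatorial step: from a vertex lying in $\rho n_0^2$ of the sets you extract a genuine $K_{p,p}$, which is exactly what the blue condition requires. This is a real improvement in rigour over the paper's sketch.

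One small boundary issue to tidy up: in the case $j'=k$ you need $p$ elements of $S$ strictly above $x_{k-1}$, and since $x_{k-1}$ can be as large as $m(p+1)$, your choice $n_R\ge m(p+1)$ is off by $p$. Taking $n_R\ge (m+1)(p+1)$ (or equivalently spacing $M$ so that it avoids the top $p$ elements of $S$) fixes this at no cost. Everything else---the positional bookkeeping that guarantees $\mathcal X_{Q^{\mathcal X}}=\mathcal X$ and $L_i^{Q^{\mathcal X}}\supseteq M\cap(x_{i'-1},x_{i'})$, and the final extraction of $v_{\mathcal X}$---is correct.
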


\begin{proof}
	Given $\rho >0$, $ m\ge k \ge 3$ and $\{i',j'\}\in [k]^2$ with $i'<j'$, we apply Theorem~\ref{thm-Ramsey} with $r_R= 2$,  $k_R=2m-k+1$ and $n_R =\max\{m^2, 3\lceil\frac{m}{\rho} \rceil\}  $ to get $N\in \mathbb N$. 
	Let $\mathcal A$ be an $N$-reduced $k$-graph satisfying the condition of the lemma.
	We now consider a $2$-edge-coloring $k_R$-uniform clique on the vertex set $[N]$ as follows.
	For any $(2m-k+1)$-set $Q=\llbracket a_1, a_2, \dots, a_{2m-k+1} \rrbracket\subset [N]$, let $L_1=\llbracket a_{i'}, a_{i'+1}, \dots, a_{i'+m-k}\rrbracket$, $L_2=\llbracket a_{j'+m-k+1},  a_{j'+m-k+2}, \dots,  a_{j'+2m-2k+1} \rrbracket$ and $\mathcal X= Q\setminus (L_1\cup L_2)$.
	We define that $Q$ is colored blue if there exists a vertex $v_{\mathcal X} \in \mathcal P_{\mathcal X}$ such that 
	\[
	v_{\mathcal X}\in \bigcap_{\ell\in L_1,~\ell'\in L_2} \mathcal S_{\ell \leftarrow \mathcal X\rightarrow \ell'},
	\]
	otherwise, $Q$ is colored red.

	By Theorem~\ref{thm-Ramsey}, there exists a set $S\subseteq [N]$ with $|S|=n_R$ such that all edges $Q$ induced on set $S$ have same color. For convenience, we rearrange the indices in $S$ and write $S=[n_R]$.
 Let $J_1=\llbracket i', i'+1,  \dots, i'+ \lceil\frac{m}{\rho} \rceil-1 \rrbracket$,  $J_2=\llbracket j'+ \lceil\frac{m}{\rho} \rceil, j'+ \lceil\frac{m}{\rho} \rceil+1, \dots, j'+ 2\lceil\frac{m}{\rho} \rceil-1 \rrbracket$ and $\mathcal X=[2\lceil\frac{m}{\rho} \rceil+k-1]\setminus (J_1\cup J_2)$.
Since $n_R =\max\{m^2, 3\lceil\frac{m}{\rho} \rceil\}$, we have $J_1, J_2, \mathcal X \subset S$.
By the condition of lemma,  we have
\[
|\mathcal S_{i'-1+j \leftarrow \mathcal X\rightarrow j'-1+\lceil \frac{m}{\rho} \rceil+j}|\ge \rho |\mathcal P_{\mathcal X}|, \text{~for~} j\in [\lceil \frac{m}{\rho} \rceil].
\]
 An easy averaging argument shows that there exists a vertex $v_{\mathcal X} \in \mathcal P_{\mathcal X}$ and a subset $J\subset [\lceil \frac{m}{\rho} \rceil]$ with $|J|= m-k+1$ such that 
\[
v_{\mathcal X} \in \bigcap_{j\in J}\mathcal S_{i'-1+j \leftarrow \mathcal X\rightarrow j'-1+\lceil \frac{m}{\rho} \rceil+j},
\]
which implies that the common color for the $(2m-k+1)$-sets induced on set $S$ is  blue.

 Now we choose $M=\{m, 2m \dots, m^2\}$. 
	For each $\mathcal X=\llbracket x_1, x_2, \dots, x_{k-1} \rrbracket \in  [M]^{k-1}$,
	we extend  $\mathcal X$ to a $(2m-k+1)$-set $Q$ by adding elements from sets $\{x \in S: x_{i'-1}<x<x_{i'}\}$ and  $\{x'\in S: x_{j'-1}<x'<x_{j'}\}$
	such that $Q$ contains all elements in $\{y\in M: x_{i'-1}<y<x_{i'} \text{~or~} x_{j'-1}<y<x_{j'} \}$. Since $Q$ is colored blue,  
	there is a vertex $v_{\mathcal X}$  satisfying
\[
v_{\mathcal X}\in \bigcap_{
	\begin{array}{c}
	x_{i'-1}<y<x_{i'},~x_{j'-1}<y'<x_{j'} \\
		\{y,y'\}\subset M
\end{array} } \mathcal S_{y \leftarrow \mathcal X\rightarrow y'}.
\]
\end{proof}

To prove Lemma~\ref{lem-Embedding-reduced}, we also need an auxiliary lemma for $k$-partite  $k$-graphs.

\begin{lemma}\label{lem-5}
	For any $\rho>0$, $k\ge 3$ and $t\in [k-1]$, the following holds for every $k$-partite  $k$-graph $H$  with vertex partition $\{V_1, \dots, V_k\}$.
	Let $T=\{v_1, \dots, v_t\}$  be a subset of $V(H)$ with $v_i\in V_i$ for $i\in [t]$.
	If $T$ is contained in at least $\rho \prod_{j\in [k]\setminus [t]}|V_j|$ 
	edges of $H$, then there exist at least $\frac{\rho}{2} |V_{t+1}|$ vertices $u\in V_{t+1}$  such that $T \cup \{u\}$ is
	contained together in at least $\frac{\rho}{2} \prod_{j\in [k]\setminus [t+1]}|V_j|$  edges of $H$.
\end{lemma}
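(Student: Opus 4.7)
The plan is a standard double-counting / averaging argument. For each $u \in V_{t+1}$, let
\[
d(u) = |\{e \in E(H) : T \cup \{u\} \subseteq e\}|
\]
denote the number of edges of $H$ containing $T \cup \{u\}$. Since every edge containing $T$ contains a unique vertex in $V_{t+1}$, summing over $u \in V_{t+1}$ gives
\[
\sum_{u \in V_{t+1}} d(u) \;=\; |\{e \in E(H) : T \subseteq e\}| \;\ge\; \rho \prod_{j \in [k] \setminus [t]} |V_j| \;=\; \rho\, |V_{t+1}| \prod_{j \in [k] \setminus [t+1]} |V_j|.
\]

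Next I would define the set of ``good'' vertices
\[
U = \Big\{u \in V_{t+1} : d(u) \ge \tfrac{\rho}{2} \prod_{j \in [k] \setminus [t+1]} |V_j|\Big\}
\]
and show $|U| \ge \tfrac{\rho}{2} |V_{t+1}|$ by contradiction. Suppose $|U| < \tfrac{\rho}{2} |V_{t+1}|$. For every $u \in V_{t+1}$, the trivial bound $d(u) \le \prod_{j \in [k] \setminus [t+1]} |V_j|$ holds (at most one edge of $H$ contains $T \cup \{u\}$ for each way of picking one vertex in each of the remaining parts). Splitting the sum according to whether $u \in U$ yields
\[
\sum_{u \in V_{t+1}} d(u) \;<\; \tfrac{\rho}{2} |V_{t+1}| \prod_{j \in [k] \setminus [t+1]} |V_j| \;+\; |V_{t+1}| \cdot \tfrac{\rho}{2} \prod_{j \in [k] \setminus [t+1]} |V_j| \;=\; \rho\, |V_{t+1}| \prod_{j \in [k] \setminus [t+1]} |V_j|,
\]
contradicting the lower bound above. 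Hence $|U| \ge \tfrac{\rho}{2} |V_{t+1}|$, which is precisely the conclusion of the lemma.

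There is no real obstacle here: the argument is essentially a one-line Markov-type estimate once one uses the $k$-partite structure to obtain the trivial upper bound $d(u) \le \prod_{j \in [k] \setminus [t+1]} |V_j|$. The only thing to be careful about is keeping track of which factor is absorbed into $|V_{t+1}|$, so that the two terms in the split sum both equal $\tfrac{\rho}{2} |V_{t+1}| \prod_{j \in [k] \setminus [t+1]} |V_j|$ and sum to exactly $\rho |V_{t+1}| \prod_{j \in [k] \setminus [t+1]} |V_j|$.
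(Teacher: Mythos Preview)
Your proposal is correct and is essentially identical to the paper's own proof: both define the set of ``good'' vertices in $V_{t+1}$, assume for contradiction it has fewer than $\tfrac{\rho}{2}|V_{t+1}|$ elements, and bound the total number of edges through $T$ by splitting according to good versus bad vertices using the trivial upper bound $d(u)\le\prod_{j\in[k]\setminus[t+1]}|V_j|$. The only cosmetic difference is that the paper keeps $|U|$ and $|V_{t+1}\setminus U|$ in the two terms while you relax them to $\tfrac{\rho}{2}|V_{t+1}|$ and $|V_{t+1}|$, which makes no difference.
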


\begin{proof}
Given $T=\{v_1, \dots, v_t\}$ with $v_i\in V_i$ for $i\in [t]$, let 
\[
U_{t+1}=\{ u\in V_{t+1}:  T \cup \{u\} \text{~is~contained~in~at~least~} \frac{\rho}{2} \prod_{j\in [k]\setminus [t+1]}|V_j| \text{~edges~of~} H  \}.
\]  
 If $|U_{t+1}|<\frac{\rho}{2} |V_{t+1}|$, then the number of edges in $H$ containing $T$ is at most
	\begin{align*}
 |U_{t+1}|\cdot \prod_{j\in [k]\setminus [t+1]}|V_j|+|V_{t+1}\setminus U_{t+1} |\cdot \frac{\rho}{2} \prod_{j\in [k]\setminus [t+1]}|V_j| < \rho  \prod_{j\in [k]\setminus [t]}|V_j|,
	\end{align*}
	which contradicts the assumption of the lemma.	
\end{proof}

Using Lemmas~\ref{lem-2} --~\ref{lem-5}, we are now ready to prove Lemma~\ref{lem-Embedding-reduced}.

\begin{proof}[Proof of Lemma~\ref{lem-Embedding-reduced}]

We begin by outlining the main
ideas of this proof. The argument proceeds in $2k$ stages.
Given $\eps>0$ and $m>k\ge 3$, we choose constants satisfying the following hierarchy (form right to left):
\[
N^{-1}\ll m^{-1}_{2k-1}\ll m^{-1}_{2k-2} \ll \dots \ll m^{-1}_{1} \ll m^{-1}, \eps <1/2,
\]
and $\rho=\eps k^{-k}$.
Let $\mathcal A$ be a $(k^{-k}+\eps)$-dense $N$-reduced $k$-graph.
	In the $1$\textsuperscript{th} step, by  Lemma~\ref{lem-2}, we can get an index  set $M_{2k-1}\subseteq [N]$ of size $m_{2k-1}$   and a pair $\{i',j'\}\in [k]^2$ with $i'<j'$ such that the induced subhypergraph $\mathcal A_{2k-1}\subseteq \mathcal A$  on  $M_{2k-1}$ satisfies conditions in Lemma~\ref{lem-4}. 
	Then, in the next stage, using Lemma~\ref{lem-4}, we can shrink the index set $M_{2k-1}$ to some $M_{2k-2}\subseteq M_{2k-1}$ of size $m_{2k-2}$ and get vertices $\alpha^{i'}_{\mathcal X}\in \mathcal P_{\mathcal X}$ for all $\mathcal X\in [M_{2k-2}]^{k-1}$ such that the induced subhypergraph $\mathcal A^{2k-2}\subseteq \mathcal A^{2k-1}$ on  $M_{2k-2}$ and vertices $\alpha^{i'}_{\mathcal X}$ with $\mathcal X\in [M_1]^{k-1}$ satisfy the property in Lemma~\ref{lem-4}.
Next consider the vertices $\alpha^{i'}_{\mathcal X}$ as $i'$-type vertices. Since each $\alpha^{i'}_{\mathcal X}$ has a non-negligible  normalized degree,  by Lemma~\ref{lem-5},
we always choose  linearly proportional vertices such that their combined normalized degree is non-negligible. 
We can then use Lemma~\ref{lem-3} to shrink such candidate sets to obtain vertices $\alpha^{\ell}_{\mathcal X}$ for some $\ell\in [k]\setminus \{i'\}$. This iterative process (using Lemma~\ref{lem-5} and Lemma~\ref{lem-3} alternately) can continue for $(k-1)$ steps until we have selected all desired vertices $\alpha^1_{\mathcal X}, \dots, \alpha^{i'-1}_{\mathcal X}, \alpha^{i'+1}_{\mathcal X},\dots, \alpha^k_{\mathcal X}$.
We then consider the vertices $\alpha^{i'}_{\mathcal X}$ as $j'$-type vertices and similarly obtain the desired vertices $\beta^1_{\mathcal X}, \dots, \beta^{k-1}_{\mathcal X}$ separately in each subsequent step.

	Let $\mathcal A$ be a $(k^{-k}+\eps)$-dense $N$-reduced $k$-graph. In the beginning, we apply Lemma~\ref{lem-2} with $\eps$ to get a constant $\rho=\eps k^{-k}$. 
	By  Lemma~\ref{lem-2},  there exists an induced subhypergraph $\mathcal A_{2k-1}\subseteq \mathcal A$  on set $M_{2k-1}\subseteq [N]$ of size $m_{2k-1}$  and a pair $\{i',j'\}\in [k]^2$ with $i'<j'$ such that the following property holds: 
	\begin{itemize}
	\item For each $I=\llbracket z_1, z_2, \dots, z_{k+1}\rrbracket\in [M_{2k-1}]^{k+1}$ with $\mathcal X:=I \setminus \{z_{i'}, z_{j'+1}\}$, we have 
	\[
	|\mathcal S^{\rho}_{\mathcal X\to z_{i'} }\cap \mathcal S^{\rho}_{\mathcal X\to z_{j'+1} }|\ge \rho |\mathcal P_{ \mathcal X}|.
	\]
\end{itemize} 		
In the $2$\textsuperscript{th} step, we apply Lemma~\ref{lem-4} with $\rho$ to $\mathcal A_{2k-1}$ and sets  $\mathcal S^{\rho}_{\mathcal X\to z_{i'} }\cap \mathcal S^{\rho}_{\mathcal X\to z_{j'+1} }$ to get an induced subhypergraph $\mathcal A_{2k-2}\subseteq \mathcal A_{2k-1}$  on set $M_{2k-2}\subseteq M_{2k-1}$ of size $m_{2k-2}$, and vertices $\alpha^{i'}_{\mathcal X}\in \mathcal P_{\mathcal X}$ for all $ \mathcal X=\llbracket x_1, \dots, x_{k-1}\rrbracket \in [M_{2k-2}]^{k-1}$ such that  
		\[
		\alpha^{i'}_{\mathcal X}\in \bigcap_{
			\begin{array}{c}
				x_{i'-1}<y<x_{i'},~x_{j'-1}<y'<x_{j'} \\
				\{y,y'\}\subset M_{2k-2}
		\end{array} } \mathcal S^{\rho}_{\mathcal X\to y }\cap \mathcal S^{\rho}_{\mathcal X\to y' } ,
		\]
where $x_{i'-1}= 0$ for $i'=1$ and $x_{j'}= N$ for $j'=k$.

For convenience, we may assume without loss of generality that $i'=1$ and $j'=2$.
Until now, we have vertices $\alpha^{1}_{\mathcal X}\in \mathcal P_{\mathcal X}$ for all $ \mathcal X=\llbracket x_1, \dots, x_{k-1}\rrbracket \in [M_{2k-2}]^{k-1}$. Moreover, for any $0<y<x_{2}$ and $x_{1}<y'<x_{2}$ with $\{y,y'\}\subset M_{2k-2}$, we have 
\begin{equation}\label{equ-pf-lem-41-1}
	\deg_{{\mathcal X}\cup \{y\} \to y}(\alpha^{1}_{\mathcal X})\ge \rho \text{~~and~~}
	\deg_{{\mathcal X}\cup \{y'\} \to y'}(\alpha^{1}_{\mathcal X})\ge \rho.
\end{equation}

Next, for each $\mathcal Y=\llbracket y_1, y_2, \dots, y_k \rrbracket \in [M_{2k-2}]^k$ and $\mathcal X_\ell=\mathcal Y\setminus \{y_\ell\}$,  let 
\[
U_{\mathcal X_2\to y_2}=\{ u\in \mathcal P_{\mathcal X_2}:  \{\alpha^{1}_{\mathcal X_1}, u\} \text{~is~contained~in~at~least~} \frac{\rho}{2} \prod_{j\in [k]\setminus \{1,2\}}|\mathcal P_{\mathcal X_j}| \text{~edges~of~} \mathcal A_{\mathcal Y} \}.
\]  
Due to $\deg_{{\mathcal Y} \to y_{1}}(\alpha^{1}_{\mathcal X_1})\ge \rho$  (see~\eqref{equ-pf-lem-41-1}), we have   $|U_{\mathcal X_2\to y_2}|\ge \frac{\rho}{2}|\mathcal P_{\mathcal X_2}|$ by Lemma~\ref{lem-5}.
In the  $3$\textsuperscript{th} step,
we apply  Lemma~\ref{lem-3} with $\rho/2$ to $\mathcal A_{2k-2}$ and sets  $U_{\mathcal X_2\to y_2}$ to get  an induced subhypergraph $\mathcal A_{2k-3}\subseteq \mathcal A_{2k-2}$  on set $M_{2k-3}\subseteq M_{2k-2}$ of size $m_{2k-3}$ and 
	vertices $\alpha^{2}_{\mathcal X}\in \mathcal P_{\mathcal X}$ for all $\mathcal X\in [M_{2k-3}]^{k-1}$ such that  for each $\mathcal X=\llbracket x_1, \dots, x_{k-1}\rrbracket \in [M_{2k-3}]^{k-1}$, the vertex $\alpha^{2}_{\mathcal X}$ satisfies 
\[
\alpha^{2}_{\mathcal X} \in 
\bigcap_{x_1<y<x_2,~ y\in M_{2k-3} } U_{\mathcal X\to y}.
\]

Now suppose that the step $t$ for $3\le t\le k$  has been finished. We have obtained  the induced subhypergraph $\mathcal A_{2k-t}$ on set $M_{2k-t}$ of size $m_{2k-t}$, as well as vertices  $\alpha^{1}_{\mathcal X}, \alpha^{2}_{\mathcal X}, \dots,  \alpha^{t-1}_{\mathcal X} \in \mathcal P_{\mathcal X}$ for all $\mathcal X\in [M_{2k-t}]^{k-1}$. In particular, for each $\mathcal Y=\llbracket y_1, y_2, \dots, y_k \rrbracket \in [M_{2k-t}]^k$ with $\mathcal X_\ell=\mathcal Y\setminus \{y_\ell\}$, we have 
\[
|\{e\in E(\mathcal A_{\mathcal Y}): \{\alpha^{1}_{\mathcal X_1}, \alpha^{2}_{\mathcal X_2}, \dots,  \alpha^{t-1}_{\mathcal X_{t-1}}\} \subset e\}|\ge \frac{\rho}{2^{t-2}} \prod_{j\in [k]\setminus  [t-1]} |\mathcal P_{\mathcal X_j}|.
\]
Next, 
for each $\mathcal Y=\llbracket y_1, y_2, \dots, y_k \rrbracket \in [M_{2k-t}]^k$ with $\mathcal X_\ell=\mathcal Y\setminus \{y_\ell\}$,  let 
\[
U_{\mathcal X_t\to y_t}=\{ u\in \mathcal P_{\mathcal X_t}:  \{\alpha^{1}_{\mathcal X_1},  \dots, \alpha^{t-1}_{\mathcal X_{t-1}}, u\} \text{~is~contained~in~at~least~} \frac{\rho}{2^{t-1}} \prod_{j\in [k]\setminus  [t]} |\mathcal P_{\mathcal X_j}| \text{~edges~of~} \mathcal A_{\mathcal Y} \}.
\]  
By Lemma~\ref{lem-5}, we have   $|U_{\mathcal X_t\to y_t}|\ge \frac{\rho}{2^t}|\mathcal P_{\mathcal X_t}|$.
	At the $(t+1)$\textsuperscript{th} step,
	we apply  Lemma~\ref{lem-3} with $\rho/2^t$ to $\mathcal A_{2k-t}$ and sets  $U_{\mathcal X_t\to y_t}$ to get  an induced subhypergraph $\mathcal A_{2k-t-1}\subseteq \mathcal A_{2k-t}$  on set $M_{2k-t-1}\subseteq M_{2k-t}$ of size $m_{2k-t-1}$ and 
	vertices $\alpha^{t}_{\mathcal X}\in \mathcal P_{\mathcal X}$ for all $\mathcal X\in [M_{2k-t-1}]^{k-1}$ such that  for each $\mathcal X=\llbracket x_1, \dots, x_{k-1}\rrbracket \in [M_{2k-t-1}]^{k-1}$, the vertex $\alpha^{t}_{\mathcal X}$ satisfies 
	\[
	\alpha^{t}_{\mathcal X} \in 
	\bigcap_{x_{t-1}<y<x_t,~ y\in M_{2k-t-1} } U_{\mathcal X\to y}.
	\]
Therefore, when the $(k+1)$\textsuperscript{th} step is over, we can obtain an induced subhypergraph $\mathcal A_{k-1}$ on set $M_{k-1}$ of size $m_{k-1}$ and vertices  $\alpha^{1}_{\mathcal X}, \alpha^{2}_{\mathcal X}, \dots,  \alpha^{k}_{\mathcal X}\in \mathcal P_{\mathcal X}$ for all $\mathcal X\in [M_{k-1}]^{k-1}$. 
In particular, for each $\mathcal Y\in [M_{k-1}]^k$ and $\mathcal X_{\ell}\in  [\mathcal Y]^{k-1}$ with $\ell \in [k]$, we have  $\{\alpha^1_{\mathcal X_{1}}, \alpha^2_{\mathcal X_{2}}, \dots, \alpha^k_{\mathcal X_{k}} \} \in E(\mathcal A_{\mathcal Y})$.

Next, for every $k$-set $\mathcal Y=\llbracket y_1, \dots, y_{k}\rrbracket \in  [M_{k-1}]^k$ and  $\mathcal X_{\ell}=\mathcal Y\setminus \{y_{\ell}\}$ with $\ell\in [k]$, let
\[
U'_{\mathcal X_1\to y_1}=\{ u'\in \mathcal P_{\mathcal X_1}:  \{u', \alpha^{1}_{\mathcal X_2}\} \text{~is~contained~in~at~least~} \frac{\rho}{2} \prod_{j\in [k]\setminus \{1,2\}}|\mathcal P_{\mathcal X_j}| \text{~edges~of~} \mathcal A_{\mathcal Y} \}.
\]
Recalling the conclusion (\ref{equ-pf-lem-41-1}), we  have $\deg_{{\mathcal Y} \to y_{2}}(\alpha^{1}_{\mathcal X_{2}})\ge \rho$. Thus, we have $| U'_{\mathcal X_1\to y_1}|\ge \frac{\rho}{2}|\mathcal P_{\mathcal X_{1}}|$ by Lemma~\ref{lem-5}.
By Lemma~\ref{lem-3} applied with $\mathcal A_{k-1}$ and sets $U'_{\mathcal X_1\to y_1}$, 
there exists  an induced  subhypergraph $\mathcal A_{k-2}\subseteq \mathcal A_{k-1}$ on set $M_{k-2}\subseteq M_{k-1}$ of size $m_{k-2}$, and there exist
vertices $\beta^{1}_{\mathcal X}\in \mathcal P_{\mathcal X}$ for all $\mathcal X\in [M_{k-2}]^{k-1}$ such that for each $\mathcal X=\llbracket x_1, \dots, x_{k-1}\rrbracket \in [M_{k-2}]^{k-1}$, the vertex $\beta^{1}_{\mathcal X}$ satisfies 
	\[
	\beta^{1}_{\mathcal X}\in \bigcap_{0<y<x_1,~ y\in M_{k-2} } U'_{\mathcal X\to y},
	\]
which means that for any $\mathcal Y=\llbracket y_1, \dots, y_{k}\rrbracket \in  [M_{k-2}]^k$ and  $\mathcal X_{\ell}=\mathcal Y\setminus \{y_{\ell}\}$ with $\ell\in [k]$, the pair $\{\beta^1_{\mathcal X_1}, \alpha^{1}_{\mathcal X_{2}} \}$
is contained in at least $\frac{\rho}{2}\prod_{i\in [k]\setminus \{1, 2\}}|\mathcal P_{\mathcal X_i}|$ edges of $\mathcal A_{\mathcal Y}$. 
Therefore, in each subsequent step $t$ with $k+3\le t\le 2k$, we consider the sets
\[
U'_{\mathcal X_{t-k}\to y_{t-k}}=\left\{ u'\in \mathcal P_{\mathcal X_t}: \big|\{e\in \mathcal A_{\mathcal Y}: \{\beta^{1}_{\mathcal X_1}, \alpha^{1}_{\mathcal X_2},\dots, \beta^{t-k-2}_{\mathcal X_{t-k-1}},u'\}\subseteq e\}\big|\ge \frac{\rho}{2^{t-k-1}} \prod_{j\in [k]\setminus [t-k]}|\mathcal P_{\mathcal X_j}|\right\}.
\] 
Similar to the process for choosing vertices $\alpha^{3}_{\mathcal X}, \dots, \alpha^{k}_{\mathcal X}$, we apply Lemma~\ref{lem-5} and Lemma~\ref{lem-3} with the sets $U'_{\mathcal X_{t-k}\to y_{t-k}}$ to get the induced subhypergraph $\mathcal A_{2k-t}\subseteq \mathcal A_{2k-t+1}$ on set $M_{2k-t}$ of size $m_{2k-t}$ and vertices $\beta^{t-k-1}_{\mathcal X}\in \mathcal P_{\mathcal X}$ for all $\mathcal X\in [M_{2k-t}]^{k-1}$.

After performing the procedure $2k$ steps as described, we obtain an induced subhypergraph $\mathcal A'$ (i.e., $\mathcal A'\subseteq \mathcal A_{1}\subseteq \dots \subseteq \mathcal A_{2k-1}\subseteq \mathcal A$) on set $M$ of size $m$ and vertices  $\alpha^{1}_{\mathcal X},\dots, \alpha^{k}_{\mathcal X}, \beta^{1}_{\mathcal X}, \dots,  \beta^{k-1}_{\mathcal X}\in \mathcal P_{\mathcal X}$ for all $\mathcal X\in [M]^{k-1}$, which
satisfy the properties~\ref{p111} and~\ref{p222} in this lemma.
\end{proof}

\section{Proof of Theorem~\ref{example-thm}}\label{sec-example}
In this section, we will prove Theorem~\ref{example-thm}.
Given a $k$-graph $F$, a necessary condition to prove that $\pi_{k-2}(F)\ge k^{k}$ is to show that $F$ has no vanishing ordering of $V(F)$. Since  there are $|V(F)|!$ ways to order $V(F)$, it is troublesome to check the ordering of $V(F)$ one by one according to the definition of ``vanishing ordering".
Therefore,  we will prove a lemma (see Lemma~\ref{example-lem}), which will be useful to rule out the existence of a vanishing ordering of
vertices of a $k$-graph. In particular,  Lemma~\ref{example-lem} is equivalent to~\cite[Lemma 16]{1/27} when $k=3$. 
As a  less obvious generalization of~\cite[Lemma 16]{1/27}, we start with introducing some notation.

Given $k\ge 2$, a {\it tight $k$-uniform cycle} $C_\ell^{(k)}$ of length $\ell> k$ is a sequence $(v_0, v_1,\dots, v_{\ell-1})$ of vertices, satisfying that
$\{v_i, \dots, v_{i+k-1}\}$ is an edge for every $0\le i\le \ell-1 $ with addition of indices taken modulo $\ell$.
A {\it $k$-uniform directed hypergraph} $D$ ({\it $k$-digraph} for short) is a pair  $D=(V(D), A(D))$ where $V (D)$ is a vertex set and $A(D)$ is a set of $k$-tuples
of vertices, called {\it  directed edge} set.
A {\it directed tight $k$-uniform cycle} $\vec C_\ell^{(k)}$  of length $\ell> k$ is  a sequence $(v_0, v_1,\dots, v_{\ell-1})$ of vertices, satisfying that
$(v_i, \dots, v_{i+k-1})$ is a directed edge for $0\le i\le \ell-1 $ (with addition of indices taken modulo $\ell$).
As usual 2-digraphs and directed tight 2-uniform cycles are simply called digraphs and directed cycles, respectively.
Given a $k$-digraph $D$, we define the 
{\it transitive digraph} $T(D)$ of $D$ as follows: $T(D)$ has the same vertex set as $D$, and each directed edge of
$D$ corresponds to a transitive tournament in $T(D)$, i.e., if $(x_1, x_2, \dots, x_k)\in A(D)$ then $(x_i,x_j)\in A(T(D))$ for any $1\le i<j\le k$.
In particular, a $k$-digraph $D$ is {\it simple} if at most one order of $k$-sets of its vertices is in $A(D)$.

\begin{lemma}\label{example-lem}
For $k\ge 3$, a $k$-graph $F$ has a vanishing ordering of $V(F)$ if and only if there exists
a $k$-edge-coloring simple $(k-1)$-digraph $D$ on $V(F)$ such that each $k$-edge of $F$ corresponds to a $k$-edge-coloring $\vec C_k^{(k-1)}$  with edges colored $0, 1,\dots  k-1$ (in
this order\footnote{This means that if  $k$-edge $e=\llbracket {v_1,v_2, \dots, v_k}\rrbracket$ under an ordering of $V(H)$, then $\vec C_k^{(k-1)}=(v_1,v_2, \dots, v_k) $ and
 the directed edge $(v_i, \dots, v_{i+k-2})$ is colored $i-1$ with addition of indices taken modulo $k$.}), and there exist two consecutive integers $\{\beta, \beta+1\}\subset \mathbb Z_{k}$ such that the subdigraph $D_{\beta, \beta+1}$ of $D$ containing all directed edges colored with $\beta$ or  $\beta+1$ satisfies the  following property:
\begin{enumerate}
\item[$\bullet$]  The transitive digraph $T(D_{\beta, \beta+1})$ does not contain directed cycles.
\end{enumerate}
\end{lemma}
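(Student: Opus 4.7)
My plan is to set up an explicit dictionary between vanishing orderings of $V(F)$ and the orientation/coloring data of $D$, in which the acyclicity of $T(D_{\beta,\beta+1})$ will encode the linearity of $\sigma$.

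For the ``only if'' direction, I would start from a vanishing ordering $\sigma$ of $V(F)$ with the associated partition $\partial F = \mathcal{C}_1 \cup \dots \cup \mathcal{C}_k$ and construct $D$ as follows. For each type-$\ell$ shadow $S \in \partial F$, pick any $k$-edge $e \supset S$ with $\sigma$-sorted vertex sequence $(u_1,\dots,u_k)$ (so $S = e \setminus \{u_\ell\}$), assign $S$ the color $\ell \bmod k \in \mathbb{Z}_k$, and orient $S$ as $(u_{\ell+1}, u_{\ell+2}, \dots, u_{\ell+k-1})$ with indices mod $k$ and $u_0 := u_k$. This rule depends only on $\ell$ and on the $\sigma$-order of $S$, not on $e$, so $D$ is well-defined and simple; by construction each $k$-edge of $F$ becomes a directed tight $(k-1)$-cycle with edges colored $0,1,\dots,k-1$ in the required cyclic order. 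To verify the acyclicity clause I would take $\beta=0$: the color-$0$ shadows (type $k$) are oriented $(u_1,\dots,u_{k-1})$ and the color-$1$ shadows (type $1$) are oriented $(u_2,\dots,u_k)$, both $\sigma$-increasing, so every arc of $T(D_{0,1})$ goes from a $\sigma$-smaller vertex to a $\sigma$-larger vertex and $T(D_{0,1})$ is acyclic.

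For the ``if'' direction, I would assume that some $\{\beta,\beta+1\} \subset \mathbb{Z}_k$ has $T(D_{\beta,\beta+1})$ acyclic, pick a topological ordering $\sigma$ of $T(D_{\beta,\beta+1})$, and extend $\sigma$ arbitrarily to the (possibly empty) set of isolated vertices of $F$. The key observation is that for any $k$-edge $e$ with associated cycle $(v_1,\dots,v_k)$ in $D$, transitivity applied to the color-$\beta$ edge $(v_{\beta+1},\dots,v_{\beta+k-1})$ and the color-$(\beta+1)$ edge $(v_{\beta+2},\dots,v_{\beta+k})$ (indices mod $k$) forces
\[
v_{\beta+1} <_\sigma v_{\beta+2} <_\sigma \dots <_\sigma v_{\beta+k},
\]
so the $\sigma$-sorted enumeration of $e$ is $u_\ell := v_{\beta+\ell}$ for $\ell \in [k]$. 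Setting $\mathcal{C}_\ell := \{S \in \partial F : S \text{ has color } (\beta+\ell) \bmod k \text{ in } D\}$ gives a partition of $\partial F$ because $D$ is simple and $k$-edge-colored; since the color-$c$ shadow of the cycle omits the vertex $v_c$ (with $v_0 := v_k$), the shadow $e \setminus \{u_\ell\} = e \setminus \{v_{\beta+\ell}\}$ has color $(\beta+\ell) \bmod k$ and therefore lies in $\mathcal{C}_\ell$, which is exactly the vanishing condition.

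The main bookkeeping obstacle is the mismatch between the cyclic indexing of $\vec{C}_k^{(k-1)}$ and the linear $\sigma$-order, mediated throughout by the identification $v_0 = v_k$. In particular, in the forward direction only the first and last positions of a $\sigma$-sorted edge yield $\sigma$-increasing shadow orientations, so the construction only delivers $T(D_{0,1})$ acyclic rather than $T(D_{\beta,\beta+1})$ for arbitrary $\beta$; the statement only requires existence of some consecutive pair, so this is enough. In the backward direction the freedom to pick any $\beta$ is cleanly absorbed by the shift in the definition $\mathcal{C}_\ell = \{S : c_D(S) \equiv \beta+\ell \pmod{k}\}$.
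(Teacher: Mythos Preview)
Your proposal is correct and follows essentially the same route as the paper. In the forward direction the paper builds $D$ edge-by-edge from each $k$-edge of $F$ while you build it shadow-by-shadow, but the resulting digraph and the verification that $T(D_{0,1})$ is acyclic are identical; in the backward direction the paper normalizes to $\beta=0$ by rotating colors and then simply asserts that an acyclic ordering of $T(D_{0,1})$ is vanishing, whereas you keep $\beta$ general and spell out the partition $\mathcal{C}_\ell=\{S:c_D(S)\equiv\beta+\ell\pmod k\}$ and the verification explicitly---this is a bit more work but makes the terse final line of the paper's proof transparent.
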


In the proof of the Lemma~\ref{example-lem}, we will use a fundamental property of acyclic digraphs. Given a digraph $D$ and an ordering  $(v_1, v_2, \dots, v_n)$ of its vertices, we say  this ordering is {\it acyclic} if  for every directed edge $(v_i, v_j)\in A(D)$, we have $i<j$.

\begin{proposition}[{\cite[Proposition 2.1.3]{bang2008digraphs}}]\label{acyclic}
Every acyclic digraph has an acyclic ordering of its vertices.
	\end{proposition}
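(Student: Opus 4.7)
The plan is to prove Proposition \ref{acyclic} by induction on the number of vertices $n=|V(D)|$, with the key combinatorial input being that every finite acyclic digraph has a \emph{source}, i.e., a vertex with no incoming directed edges. Once this is established, prepending the source to an acyclic ordering of the remainder (obtained by induction) yields the desired acyclic ordering of $V(D)$.

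The base case $n=1$ is immediate. For the inductive step, suppose the result holds for all acyclic digraphs on fewer than $n$ vertices, and let $D$ be an acyclic digraph on $n\ge 2$ vertices. First I would establish the source lemma: every finite acyclic digraph $D$ contains a vertex $v$ with no incoming directed edges. The argument is by contradiction: if every vertex of $D$ has at least one in-neighbor, then starting from any vertex $u_0$ we may choose $u_1$ with $(u_1,u_0)\in A(D)$, then $u_2$ with $(u_2,u_1)\in A(D)$, and so on, producing an infinite backward sequence $u_0, u_1, u_2,\dots$ in the finite set $V(D)$. By the pigeonhole principle some $u_i=u_j$ with $i<j$, and then $(u_j,u_{j-1}),(u_{j-1},u_{j-2}),\dots,(u_{i+1},u_i)$ is a directed cycle in $D$, contradicting acyclicity.

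Given the source lemma, fix such a source $v\in V(D)$ and consider the induced subdigraph $D':=D-v$ on $n-1$ vertices. Clearly $D'$ is still acyclic (any directed cycle in $D'$ would also be a directed cycle in $D$), so by the inductive hypothesis there exists an acyclic ordering $(v_2,v_3,\dots,v_n)$ of $V(D')$. Setting $v_1:=v$, I claim $(v_1,v_2,\dots,v_n)$ is an acyclic ordering of $V(D)$: indeed, for any directed edge $(v_i,v_j)\in A(D)$ with $i\ne j$, either both endpoints lie in $V(D')$, in which case $i<j$ by the inductive hypothesis, or the edge involves $v_1=v$; but since $v$ is a source there is no directed edge of the form $(v_j,v_1)$ with $j\ge 2$, so the only possibility is $i=1<j$, as required.

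The only nontrivial step is the source lemma, and there the essential ingredient is the finiteness of $V(D)$ together with acyclicity; nothing beyond the pigeonhole principle is needed. I do not expect any serious obstacle in carrying out this plan.
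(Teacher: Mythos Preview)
Your proof is correct and is the standard argument. Note that the paper does not actually give its own proof of this proposition: it is quoted directly from \cite{bang2008digraphs} and used as a black box in the proof of Lemma~\ref{example-lem}, so there is no in-paper proof to compare against.
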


\begin{proof}[Proof of Lemma~\ref{example-lem}]
Given a $k$-graph $F$ with $f$ vertices, let $\tau=(v_1, \dots, v_{f})$ be a vanishing ordering of $V(F)$. 
	We construct a $k$-edge-coloring $(k-1)$-digraph $D$ on $V(F)$  as follows. 
	For any $e\in E(F)$, if $e=\llbracket v_{i(1)},\dots, v_{i(k)} \rrbracket$ under  $\tau$, the directed tight $(k-1)$-uniform cycle $\vec C_k^{(k-1)}=(v_{i(1)},\dots, v_{i(k)})$ is present in $D$. Furthermore, for every $j \in [k]$, the directed edge $(v_{i(j)}, v_{i(j+1)}, \dots, v_{i(j+k-2)}) $ is colored $(j-1)$ for every $j\in [k]$ (where the subscripts are taken modulo $k$). Since $\tau$ is a vanishing ordering of $V(F)$, we obtain that $D$ is a simple $(k-1)$-digraph and each $k$-edge of $F$ corresponds to a $\vec C_k^{(k-1)}$ with edges colored $0, 1,\dots  k-1$ in this ordering. Let $D_{0,1}$ denote the subdigraph of $D$ containing all directed edges colored with $0$ or $1$. Observe that for each edge $e=\llbracket v_{i(1)},\dots, v_{i(k)} \rrbracket\in E(F)$, we have $(v_{i(1)}, v_{i(2)}, \dots, v_{i(k-1)}), (v_{i(2)}, v_{i(3)}, \dots, v_{i(k)})\in A(D_{0,1})$, which implies that every
	directed edge in the transitive digraph $T(D_{0,1})$ is directed from a vertex with a small index to a vertex  with a large index under the ordering $\tau$.  Hence, $T(D_{0,1})$ has no directed cycles.

	Next, given an ordering of $V(F)$, suppose that there exists a  $k$-edge-coloring simple $(k-1)$-digraph $D$ with colors $0,1,\dots, k-1$ satisfying the properties given in the lemma. By symmetry, we may assume that the transitive digraph $T(D_{0,1})$ is acyclic (otherwise,
	we cyclically rotate the colors to satisfy this). By Proposition~\ref{acyclic},  $T(D_{0,1})$ has an acyclic ordering $\tau'$ of $V(T(D_{0,1}))$.
By the definition of $T(D_{0,1})$, $\tau'$ is a  vanishing ordering of $V(F)$.
\end{proof}

Now we give a proof of Theorem~\ref{example-thm} using Lemma~\ref{example-lem} and Theorem~\ref{1/kthm}.
%
%

\begin{proof}[Proof of Theorem~\ref{example-thm}]
Let $F^{k}_t$ be the $k$-graph given in Theorem~\ref{example-thm}. We first apply Lemma~\ref{example-lem} to show that $F^{k}_t$ has no
	vanishing ordering of $V(F^{k}_t)$.
	Consider a $(k-1)$-digraph $D$ as described in the statement of Lemma~\ref{example-lem}.
Due to symmetry, it is allowed to assume that $(a_1, a_2, \dots a_{k-1})\in A(D)$ and is colored with $0$. Set $x\in \{b,c,d\}$.  Since each $k$-edge of $F^{k}_t$ corresponds to a directed tight $k$-uniform cycle in $D$ with edges colored $0, 1,\dots,  k-1$, by cyclic symmetry, we obtain that $(a_i, \dots, a_{k-1}, x_0, \dots, x_{i-2})\in A(D)$ with color $(i-1)$ for each $i\in [k-1]$,  and 
 $(x_0, x_1, \dots,  x_{k-2})\in A(D)$   with color $k-1$. Moreover, we also obtain that $(x_{\ell}, x_{\ell+1}, \dots,  x_{\ell+k-2})\in A(D)$ with color $\ell-1 \pmod k$ for each $0\le \ell \le t-k+2$, $(b_{t-k+3},  \dots, b_t, c_t), (c_{t-k+3},  \dots, c_t, d_t)$, and $(d_{t-k+3},  \dots, d_t, b_t)\in A(D)$ with color $t-k+2 \pmod k$. 
	
Given $\beta\in \mathbb Z_k$, let  $D_{\beta}$ denote the sub-digraph of $D$ containing all directed $(k-1)$-edges colored with $\beta$. For each $j\in \{t-k+3, t-k+4, \dots, t\}$, if $\beta \equiv j \pmod k$, then the transitive digraph $T(D_{\beta})$ always contains a directed cycle formed by $(b_t, c_t), (c_t, d_t), (d_t, b_t)$. For simplicity, let $D'=D_{t-k+2 \pmod k}\cup D_{t+1 \pmod k}$. Observe that the following directed edges
\[(b_{t-k+2}, \dots,b_{t-1}, b_t), (c_{t-k+2}, \dots, c_{t-1}, c_t), \text{~and ~} (d_{t-k+2}, \dots, d_{t-1}, d_t) \] 
\[
(c_t, b_{t-k+2}, \dots, b_{t-1}), (d_t, c_{t-k+2}, \dots, c_{t-1}), \text{~and ~} (b_t, d_{t-k+2}, \dots, d_{t-1})
\]
 all belong to $D'$. Therefore, the transitive digraph $T(D')$  also contains a directed cycle formed by
 \[
(b_{t-1}, b_t), (b_t, d_{t-1}), (d_{t-1}, d_t), (d_t, c_{t-1}), (c_{t-1}, c_t), (c_t, b_{t-1}).
 \]
Hence, for each pair $\{\beta, \beta+1\}\subset \mathbb Z_{k}$, the transitive digraph $T(D_{\beta,\beta+1})$ always contains a  directed cycle.
	By Lemma~\ref{example-lem},  $F^{(k)}_t$ has no
	vanishing ordering of $V(F)$.
	
	Next, we claim that $F^{(k)}_t$ also satisfies  the property ($\spadesuit$) of Theorem~\ref{1/kthm}. 
	For simplicity, let $e_1=\{d_{t-k+2},\dots, d_t,b_t\}$, $e_2=\{d_{t-k+1}, \dots d_{t-1}, d_t\}$ and $S=e_1\cap e_2=\{d_{t-k+2}, \dots, d_t\}$.
Furthermore, let $F_1$ be the spanning subhypergraph of $F^{(k)}_t$ with the only edge $e_1$,
and  $F_2$  be the spanning subhypergraph of $F^{(k)}_t$ by removing the edge $e_1$.
Clearly, $\{S\} = \partial F_1 \cap \partial F_2$.
Next for each $\{i,j\}\in [k]^2$ with $i<j$, we would find an ordering $\tau_{i,j}$ of $V(F^{(k)}_t)$ such that $\tau_{i,j}$ is vanishing both for $F_1$ and $F_2$, and  the $(k-1)$-set $S$ is $i$-type w.r.t $F_1$ and $j$-type w.r.t  $F_2$ under the ordering $\tau_{i,j}$.
	We first consider a partition $\{X_1, X_2, \dots, X_k\}$ of $V(F^{(k)}_t)$ with 
	\[
	X_{\ell}=\{x_r\in V(F^{(k)}_t): r\equiv t+\ell \pmod k,~~ x\in \{a,b,c,d\}, ~~0\le r\le  t \} \text{~for~} \ell\in [k].
	\]
	Set $Y_\ell=X_{\ell}\setminus \{b_{t-k+\ell}, d_{t-k+\ell}\}$ for $\ell\in [k]$.
	
	When $i>1$ and $j-i\ge 2$, we consider an ordering  $\tau_{i,j}$ of $V(F^{(k)}_t)$ that contains, in turn, all vertices of $X_2$, then all vertices of $X_3$, and so on, up to all vertices of $X_i$, then followed by the ordering $(b_{t-k+i+1}, \dots, b_{t-k+j-1}$, $b_t, d_{t-k+i+1},$ $ \dots, d_{t-k+j-1})$, then all vertices of $Y_{i+1}$, and so on, up to all vertices of $Y_{j-1}$,  then all vertices of $Y_{k}$,  then the vertex $d_t$, then all vertices of $X_1$, then all vertices of $X_j$, then all vertices of $X_{j+1}$, and so on, up to all vertices of $X_{k-1}$. The ordering of elements inside the sets $X_\ell$ and $Y_\ell$ is arbitrary. Note that $\tau_{i,j}$ is vanishing for $F_1$ since $E(F_1)=\{e_1\}$. Under the ordering $\tau_{i,j}$, we have
\[
\begin{split}
& e_1=(d_{t-k+2}, \dots, d_{t-k+i}, b_t~~~~~~, d_{t-k+i+1}, \dots, d_{t-k+j-1}, d_t, d_{t-k+j}, \dots, d_{t-1}), \text{~and~}\\
& e_2=(d_{t-k+2}, \dots, d_{t-k+i}, d_{t-k+1}, d_{t-k+i+1}, \dots, d_{t-k+j-1}, d_t, d_{t-k+j}, \dots, d_{t-1}) 
\end{split}
\]
Therefore, the set $S$ is $i$-type w.r.t. $e_1$ and  $j$-type w.r.t. $e_2$.  Set $e_3=\{b_{t-k+2},\dots, b_t, c_t\}$ and $e_4=\{c_{t-k+2},\dots, c_t, d_t\}$. Observe that each $k$-edge in  $F_2\setminus \{e_3, e_4\}$, contains exactly one vertex from $X_{\ell}$ for $\ell \in [k]$. In particular, given $e\in E(F_2)\setminus \{e_3, e_4\}$ and $0\le r\le t-k$, $e\setminus \{x_{r+\ell: r\equiv t \pmod k}\}$ is $(\ell-1)$-type w.r.t. $F_2$ for $2\le \ell\le j-1$, $e\setminus \{x_{r+k: r\equiv t \pmod k}\}$ is $(j-1)$-type w.r.t. $F_2$, $e\setminus \{x_{r+1: r\equiv t \pmod k}\}$ is $j$-type w.r.t. $F_2$, and $e\setminus \{x_{r+\ell: r\equiv t \pmod k}\}$ is $(\ell+1)$-type w.r.t. $F_2$ for $j\le \ell\le k-1$.
Furthermore, the order of $e_3$ is $(b_{t-k+2}, \dots, b_{t-k+j-1},b_t, c_t, b_{t-k+j}, \dots, b_{t-1})$, and 
	the order of $e_4$ is $(c_{t-k+2}, \dots, c_{t-k+j-1}, c_t, d_t$, $c_{t-k+j}, \dots, c_{t-1})$. Therefore, $\tau_{i,j}$ is also a vanishing ordering of  $V(F_2)$ and the set $S$ is $j$-type  w.r.t. $F_2$ under the ordering $\tau_{i,j}$.
	
	When $i=1$ and $j-i\ge 2$, we consider any ordering  $\tau'_{i,j}$ of $V(F^{(k)}_t)$ that contains, in turn, the ordering $(b_{t-k+2}, \dots,$ $ b_{t-k+j-1}, b_t, d_{t-k+2}, \dots, d_{t-k+j-1})$,  then all vertices of $Y_2$, $\dots$, then all vertices of $Y_{j-1}$,  then all vertices of $Y_{k}$,  then vertex $d_t$, then all vertices of $X_1$, then all vertices of $X_j$, $\dots$, and then all vertices of $X_{k-1}$.
	If $i=1$ and $j=2$, then we consider $\tau'_{1,2}$ in turn contains the vertex $b_t$,  then all vertices of $Y_{k}$,  then vertex $d_t$, then all vertices of $X_1$, $\dots$, and then all vertices of $X_{k-1}$.
	Similarly, we can easily verify that the ordering $\tau'_{i,j}$ is vanishing both for $F_1$ and $F_2$
	and the set  $S$ is $i$-type w.r.t. $F_1$ and $j$-type w.r.t. $F_2$.  
	
	When $i>1$ and $j=i+1$, we consider any ordering  $\tau''_{i,j}$ of $V(F^{(k)}_t)$ that contains, in turn, all vertices of $X_2$, then all vertices of $X_3$, $\dots$, then all vertices of $X_i$, then the vertex $b_t$, then all vertices of $Y_{k}$,  then the vertex $d_t$, then all vertices of $X_1$, then all vertices of $X_{i+1}$, $\dots$, and then all vertices of $X_{k-1}$. Similarly, we can easily verify that the ordering $\tau''_{i,j}$ is vanishing both for $F_1$ and $F_2$
	and the set  $S$ is $i$-type w.r.t. $F_1$ and $j$-type w.r.t. $F_2$. Hence, $F^{(k)}_t$ also satisfies  the property ($\spadesuit$) of Theorem~\ref{1/kthm}.
\end{proof}

\section{Concluding Remarks}\label{remarks}
Theorem~\ref{1/kthm} provides a sufficient condition for $k$-graphs $F$ satisfying $\pi_{k-2}(F)=k^{-k}$.
Although we do not currently have a complete characterization for all $k$-graphs $F$ with $\pi_{k-2}(F)=k^{-k}$, this sufficient condition is likely to be  close to the complete characterization due to the following result.

\begin{theorem}\label{1/kthm-conj}
	Given $k\ge 3$, let $F$ be a $k$-graph that does not satisfy the following condition:
	\begin{enumerate}[label=$(\roman*)$]
	\item[{\rm ($\spadesuit^*$)}] For each pair $\{i,j\}\in [k]^2$ with $i<j$, $F$ can always be partitioned into two spanning subhypergraphs $F^1_{i,j}$ and $F^2_{i,j}$ such that there exists an ordering of $V(F)$ that is vanishing both for $F^1_{i,j}$ and $F^2_{i,j}$,
and for each pair $e_1\in E(F^1_{i,j})$, $e_2\in E(F^2_{i,j})$ with $|e_1\cap e_2|=k-1$,  $e_1\cap e_2$ is either 
same $\ell$-type w.r.t. $F^1_{i,j}$ and $F^2_{i,j}$ for some $\ell\in [k]$, or $e_1\cap e_2$ is $i$-type w.r.t. $F^1_{i,j}$ and $e_1\cap e_2$ is $j$-type w.r.t. $F^2_{i,j}$.
	\end{enumerate}
	Then $\pi_{k-2}(F)\ge 3(k+1)^{-k}>k^{-k}$.
\end{theorem}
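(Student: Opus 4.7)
The plan is to invoke Theorem~\ref{low-k-graphs} with $r = k+1$ and a carefully chosen family $\mathscr{P} \subseteq [k+1]^{[k]}$ of size exactly $3$, which will give $\pi_{k-2}(F) \ge |\mathscr{P}|/r^k = 3(k+1)^{-k}$; the strict inequality $3(k+1)^{-k} > k^{-k}$ follows at once from $(1+1/k)^k < e < 3$ for all $k \ge 1$. Since $F$ does not satisfy $(\spadesuit^*)$, I first fix a pair $\{i,j\} \in [k]^2$ with $i < j$ witnessing this failure, i.e.\ such that no partition $F = F^1 \cup F^2$ admits a common vanishing ordering obeying the type constraint of $(\spadesuit^*)$. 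Using this pair, I will take $\mathscr{P} = \{T_0, T_1, T_2\}$ where $T_0 = (1,2,\dots,k)$ and $T_1, T_2 \in [k+1]^{[k]}$ are obtained from $T_0$ by replacing the $i$\textsuperscript{th} coordinate (respectively $j$\textsuperscript{th} coordinate) with $k+1$. The colour $k+1$ plays the role of a ``marker'' that can occur only at the special coordinate $i$ or $j$.

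The core of the proof is to verify that, for every $n$ and every $\psi : [n]^{k-1} \to [k+1]$, the resulting $k$-graph $H$ is $F$-free. Assume for contradiction that $F \subset H$. Any two of $T_0, T_1, T_2$ disagree in at least one coordinate with both values in $[k]$, so each edge of $F$ matches exactly one template, yielding a well-defined partition $E(F) = E(F^1) \sqcup E(F^2)$ in which $F^1$ collects the $T_0$- and $T_1$-edges and $F^2$ collects the $T_2$-edges. I will then show that the natural ordering on $V(F) \subseteq [n]$ is vanishing for both $F^1$ and $F^2$ by setting
\[
\mathcal{C}_\ell^{F^1} := \{S \in \partial F^1 : \psi(S) = \ell\} \text{ for } \ell \in [k]\setminus\{i\}, \qquad \mathcal{C}_i^{F^1} := \{S \in \partial F^1 : \psi(S) \in \{i,k+1\}\},
\]
and symmetrically for $F^2$ using coordinate $j$. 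These are valid vanishing partitions by a direct unwinding of the template definitions: inside $F^1$, the value $i$ is never taken by $T_1$ (so any $S$ with $\psi(S) = i$ arises only from $T_0$-edges at position $i$) and the colour $k+1$ arises only from $T_1$-edges at position $i$; hence the position of any $S$ inside an $F^1$-edge is determined by $\psi(S)$ alone, making the partition self-consistent. The analogous statement holds for $F^2$.

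It remains to check the cross-part type condition. For any $S \in \partial F^1 \cap \partial F^2$, I will case on $\psi(S)$: when $\psi(S) \in [k]$, the set $S$ has the same type $\psi(S)$ in both $F^1$ and $F^2$; when $\psi(S) = k+1$, the set $S$ is $i$-type in $F^1$ (through $\mathcal{C}_i^{F^1}$) and $j$-type in $F^2$ (through $\mathcal{C}_j^{F^2}$), which is precisely the allowed $(i,j)$-type case of $(\spadesuit^*)$ since $i < j$. Thus $(F^1, F^2)$ would witness $(\spadesuit^*)$ for the pair $\{i,j\}$, contradicting the choice of $\{i,j\}$.

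The main obstacle I anticipate is designing the three templates so that every $(k-1)$-set $S$ has a well-defined type in $F^1$ (resp.\ $F^2$) even when it belongs to edges coming from different templates. This is exactly what forces $T_1$ (resp.\ $T_2$) to differ from $T_0$ in a single coordinate and to use the ``spare'' value $k+1$ there: any other modification either destroys the consistency of $\mathcal{C}_\ell^{F^1}$ (a single $S$ could acquire two different types inside $F^1$) or produces a cross-part mismatch of type $(j,i)$ rather than $(i,j)$, which is not an allowed case of $(\spadesuit^*)$. With the template choice above, every case works out cleanly, and the rest of the argument is a direct application of Theorem~\ref{low-k-graphs}.
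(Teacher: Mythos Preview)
Your approach is identical to the paper's: the same three templates $T_0,T_1,T_2\subseteq[k+1]^{[k]}$ and the same appeal to Theorem~\ref{low-k-graphs}, with the added benefit that you actually spell out the vanishing partitions $\mathcal{C}_\ell^{F^1},\mathcal{C}_\ell^{F^2}$ which the paper leaves implicit in the one-line claim ``each subhypergraph of $H$ satisfies $(\spadesuit^*)$''. One small slip: your sentence ``any two of $T_0,T_1,T_2$ disagree in at least one coordinate with both values in $[k]$'' is false (each pair differs only at a coordinate where one value is $k+1$), but this is harmless since the templates are distinct tuples and that alone makes the edge partition well defined.
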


\begin{proof}
	Suppose that  there exists a pair $\{i', j'\}\in [k]^2$ with $i'<j'$ such that $k$-graph $F$ does not satisfy the property ($\spadesuit^*$). We will prove $\pi_{k-2}(F)\ge 3(k+1)^{-k}$ using Theorem~\ref{low-k-graphs}.
	Let $\mathscr P =\{(1,2,\dots,k), (1, \dots,i'-1, k+1, i'+1, \dots, k), (1, \dots,j'-1, k+1, j'+1, \dots, k)\}$ be a subset of $[k+1]^{[k]}$.
	For every $n\in \mathbb N$ and $\psi: [n]^{k-1} \to [k+1]$, we consider the 
 $k$-graph $H$ with vertex set $[n]$ and edge set 
 	\[
 E(H)=\{e=\llbracket {i_1,i_2, \dots, i_\ell}\rrbracket \in [n]^k: (\psi(e\setminus \{i_{1}\}), \psi(e\setminus \{i_{2}\}), \dots, \psi(e\setminus \{i_{k}\}))\in \mathscr P\}.
 \] 
Observe that each subhypergraph of $H$ satisfies the property ($\spadesuit^*$) for $\{i', j'\}$.
Therefore, $H$ is $F$-free. By Theorem~\ref{low-k-graphs}, we have $\pi_{k-2}(F)\ge|\mathscr P|/(k+1)^k= 3(k+1)^{-k}$.
\end{proof}

Inspired by  Theorem~\ref{1/kthm-conj}, we have the following conjecture.
\begin{conjecture}\label{conj1}
Given $k\ge 3$, let $F$ be a $k$-graph satisfying the following conditions:
\begin{enumerate}[label=$(\roman*)$]
	\item[{\rm ($\clubsuit$)}] $F$ has no vanishing ordering of $V(F)$; 
	\item[{\rm ($\spadesuit^*$)}] For each pair $\{i,j\}\in [k]^2$ with $i<j$, $F$ can always be partitioned into two spanning subhypergraphs $F^1_{i,j}$ and $F^2_{i,j}$ such that there exists an ordering of $V(F)$ that is vanishing both for $F^1_{i,j}$ and $F^2_{i,j}$,
	and for any two edges $e_1\in E(F^1_{i,j})$, $e_2\in E(F^2_{i,j})$ with $|e_1\cap e_2|=k-1$,  $e_1\cap e_2$ is either 
	same $\ell$-type w.r.t. $F^1_{i,j}$ and $F^2_{i,j}$ for some $\ell\in [k]$, or $e_1\cap e_2$ is $i$-type w.r.t. $F^1_{i,j}$ and $e_1\cap e_2$ is $j$-type w.r.t. $F^2_{i,j}$.
\end{enumerate}
Then $\pi_{k-2}(F)=k^{-k}$.
\end{conjecture}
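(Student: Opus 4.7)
The lower bound $\pi_{k-2}(F)\ge k^{-k}$ is immediate from ($\clubsuit$) together with Theorem~\ref{thm-k-2-zero-density} and Corollary~\ref{lower-bound}. For the upper bound, the plan is to follow the template of the proof of Theorem~\ref{1/kthm}: by Theorem~\ref{thm-turan-reduced}, it suffices to show that for every $\eps>0$ there exists $m\in\mathbb{N}$ such that every $(k^{-k}+\eps)$-dense $m$-reduced $k$-graph $\mathcal A$ embeds $F$. Given such an $\mathcal A$, I would apply a suitably strengthened version of Lemma~\ref{lem-Embedding-reduced} to obtain an induced subhypergraph $\mathcal A'\subseteq\mathcal A$ on an index set $M$ of size $m$ together with a pair $\{i',j'\}\in[k]^2$ with $i'<j'$, and then use the partition $F=F^1_{i',j'}\cup F^2_{i',j'}$ provided by ($\spadesuit^*$) to build a reduced map $(\phi,\psi)$ from $F$ to $\mathcal A'$, where $\phi$ respects the vanishing ordering of $V(F)$ and $\psi$ sends each $S\in\partial F$ to an appropriately chosen vertex of $\mathcal A'$.

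The essential new difficulty compared with Theorem~\ref{1/kthm} is that under ($\spadesuit^*$) a shared $(k-1)$-set $S=e_1\cap e_2$ with $e_1\in E(F^1_{i',j'})$ and $e_2\in E(F^2_{i',j'})$ may now be of the \emph{same-type} kind, that is, $\ell$-type in both $F^1_{i',j'}$ and $F^2_{i',j'}$ for some $\ell\in[k]$, rather than only $(i',j')$-type. Here the original assignment rule of $\psi$ from the proof of Theorem~\ref{1/kthm} would simultaneously try to set $\psi(S)=\alpha^{\ell}_{\phi(S)}$ (via the $F^1$-side) and $\psi(S)\in\{\beta^{\ell}_{\phi(S)},\beta^{\ell-1}_{\phi(S)},\alpha^{i'}_{\phi(S)}\}$ (via the $F^2$-side), which clash. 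To resolve this, the strengthened embedding lemma should produce, alongside the vertices $\alpha^1_{\mathcal X},\dots,\alpha^k_{\mathcal X},\beta^1_{\mathcal X},\dots,\beta^{k-1}_{\mathcal X}$, an additional vertex $\gamma^\ell_{\mathcal X}\in\mathcal P_{\mathcal X}$ for each $\mathcal X\in[M]^{k-1}$ and each $\ell\in[k]$ that can play position $\ell$ in both an $F^1$-style $k$-edge (the tuple $\{\alpha^1_{\mathcal X_1},\dots,\alpha^k_{\mathcal X_k}\}$ with its $\ell$-th coordinate replaced by $\gamma^\ell_{\mathcal X_\ell}$) and an $F^2$-style $k$-edge. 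Then setting $\psi(S)=\gamma^\ell_{\phi(S)}$ for each same-type shared $(k-1)$-set $S$ restores consistency of the reduced map.

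The bulk of the work is proving this strengthened embedding lemma. I would extend the $2k$-stage Ramsey-and-averaging scheme of Section~\ref{sec-Reduced-embed} with further stages, each selecting one $\gamma^\ell_{\mathcal X}$. After the $\alpha$'s and $\beta$'s have been fixed, at the stage responsible for $\gamma^\ell$ the candidate pool $C^\ell_{\mathcal X}\subseteq\mathcal P_{\mathcal X}$ would consist of those vertices $v$ for which both the $F^1$-style tuple and the $F^2$-style tuple, each with $v$ inserted at the position-$\ell$ entry, lie in a non-negligible fraction of the corresponding constituent; Lemma~\ref{lem-5} then guarantees $|C^\ell_{\mathcal X}|$ remains a linear fraction of $|\mathcal P_{\mathcal X}|$, and Lemma~\ref{lem-3} together with Theorem~\ref{thm-Ramsey} extracts a single representative $\gamma^\ell_{\mathcal X}$ per vertex class while shrinking $M$.

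The principal obstacle will be showing that the initial candidate pools $C^\ell_{\mathcal X}$ are nonempty already at density $k^{-k}+\eps$. Each $\gamma^\ell_{\mathcal X}$ must simultaneously satisfy two high-degree conditions (one coming from each of the $F^1$- and $F^2$-style tuples), and a naive union-type argument would only succeed above density roughly $2k^{-k}$. Overcoming this gap will likely require a sharper variant of Lemma~\ref{lem-2} in which the AM-GM step is replaced by an accounting exploiting that the two tuples share the vertex $\alpha^{i'}_{\mathcal X_{j'}}$, so that their combined degree deficits are controlled by a single $\eps$-budget rather than two separate ones. If this refinement can be pushed through, the remainder of the argument -- defining $\psi$ to use $\alpha$'s for $F^1$-only sets, $\beta$'s for $F^2$-only sets, $\alpha^{i'}$'s for $(i',j')$-shared sets, and the new $\gamma^\ell$'s for same-type shared sets, and then invoking Theorem~\ref{thm-turan-reduced} -- is essentially routine and mirrors the closing steps of the proof of Theorem~\ref{1/kthm}.
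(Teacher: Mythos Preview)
The statement you are attempting to prove is Conjecture~\ref{conj1}, which the paper explicitly leaves \emph{open}. There is no proof in the paper to compare against: the authors establish only the weaker Theorem~\ref{1/kthm} (under hypothesis ($\spadesuit$)) and then state ($\spadesuit^*$) as a conjecture, motivated by the necessary condition in Theorem~\ref{1/kthm-conj}.

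You have correctly isolated where the proof of Theorem~\ref{1/kthm} breaks down under the weaker hypothesis, and your proposed fix --- an additional family of vertices $\gamma^\ell_{\mathcal X}$ playing position $\ell$ simultaneously for both the $\alpha$- and $\beta$-tuples --- is the natural thing to try. But your own account flags the genuine obstruction and does not resolve it: after the $\alpha$'s and $\beta$'s are already fixed, there is no reason the set of vertices compatible with \emph{both} completed tuples should have linear size at density $k^{-k}+\eps$. Lemma~\ref{lem-5} only propagates a linear fraction one coordinate at a time within a single tuple; intersecting the resulting candidate sets from two independently constructed tuples has no such guarantee. The hope that ``sharing $\alpha^{i'}_{\mathcal X_{j'}}$'' lets one control both degree deficits with a single $\eps$-budget is not supported by anything in Section~\ref{sec-Reduced-embed}: the AM--GM step in Lemma~\ref{lem-2} concerns the sizes $t_\ell$ of high-degree sets, not the joint membership of a single vertex in two specific edges. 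As written, your proposal is a sketch of what a proof would have to accomplish, together with an honest admission that the key inequality is missing. That missing step is exactly why the paper states this as a conjecture rather than a theorem.
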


Garbe, Kr\'al' and Lamaison~\cite{1/27} also asked the following problem for  $k=3$.
\begin{problem}
Does there exist $\eps>0$ such that   $\pi_{k-2}(\cdot)$ jumps from  $k^{-k}$ to at least $k^{-k} +\eps$?
\end{problem}

If the Conjecture~\ref{conj1} is true, then we could  easily obtain that $\pi_{k-2}(\cdot)$ will jump from  $k^{-k}$ to at least $3(k+1)^{-k}$.
Moreover, compared to  Problem~\ref{problem2}, it is natural to ask
the following problem.

\begin{problem}
Is there a $k$-graph $F$ such that $\pi_{k-2}(F)$ is equal to or arbitrarily close to $3(k+1)^{-k}$?
\end{problem}

\bibliographystyle{abbrv}
\bibliography{ref}
\end{document}